\theoremstyle{definition}
\newtheorem{definition}{Definition}[section]
\newtheorem{remark}[definition]{Remark}
\newtheorem{example}[definition]{Example}
\newtheorem{notation}[definition]{Notation}
\newtheorem{algorithm}[definition]{Algorithm}
\theoremstyle{plain}
\newtheorem{lemma}[definition]{Lemma}
\newtheorem{theorem}[definition] {Theorem}
\newtheorem{proposition}[definition] {Proposition}
\newtheorem{main}[definition]{Main Theorem}
\newtheorem{corollary}[definition]{Corollary}
\definecolor{forest}{rgb}{0.0, 0.5, 0.0}
\numberwithin{equation}{section}
\begin{document}
\title{Freiheitssatz for amalgamated products of free groups over maximal cyclic subgroups}
\author{Carsten Feldkamp}
\date{\today}
\maketitle
\abstract{In 1930, Wilhelm Magnus introduced the so-called Freiheitssatz: Let
$F$ be a free group with basis $\mathcal{X}$ and let $r$ be a cyclically reduced element of $F$ which
contains a basis element $x \in \mathcal{X}$, then every non-trivial element of the normal closure of $r$ in $F$ contains the basis element $x$. Equivalently, the subgroup freely generated by $\mathcal{X} \backslash \{x\}$ embeds canonically into the quotient group $F / \langle \! \langle r \rangle \! \rangle_{F}$. In this article, we want to introduce a Freiheitssatz for amalgamated products $G=A \ast_{U} B$ of free groups $A$ and $B$, where $U$ is a maximal cyclic subgroup in $A$ and $B$: If an element $r$ of $G$ is neither conjugate to an element of $A$ nor $B$, then the factors $A$, $B$ embed canonically into $G / \langle \! \langle r \rangle \! \rangle_{G}$.}

\section{Introduction}

For a group $G$ and an element $r \in G$ we denote the normal closure of $r$ in $G$ by $\langle \! \langle r \rangle \! \rangle_{G}$. We mostly write $G / \langle \! \langle r \rangle \! \rangle$ instead of $G / \langle \! \langle r \rangle \! \rangle_{G}$ if it is clear from the context, that the normal closure is taken over $G$. Further, we write $[a,b]=a^{-1}b^{-1}ab$ for the commutator $[a,b]$ of two elements $a,b$.

In 1930, W. Magnus proved the classical \emph{Freiheitssatz}: If $F$ is a free group with basis $\mathcal{X}$ and $r$ a cyclically reduced element containing a basis element $x \in \mathcal{X}$, then the subgroup freely generated by $\mathcal{X} \backslash \{x\}$ embeds canonically into the quotient group $F / \langle \! \langle r \rangle \! \rangle$. This result became a cornerstone of one-relator group theory and led to different kinds of natural generalizations. 

One way to generalize the Freiheitssatz of W. Magnus is to study so-called \emph{one-relator products}. A \emph{one-relator product} of groups $A_{j}$ ($j\in \mathcal{J}$) for some index set $\mathcal{J}$ is a quotient group $(\bigast_{j \in \mathcal{J}} A_{j} )/ \langle \! \langle r \rangle \! \rangle$, where $r$ is an element of $\bigast_{j \in \mathcal{J}} A_{j}$ which is not conjugate to an element of a single $A_{j}$ ($j \in \mathcal{J}$). Note that one-relator groups are special cases of one-relator products where the $A_{j}$ ($j\in \mathcal{J}$) are free groups. Thus, knowing the Freiheitssatz of W. Magnus, it is natural to ask under which conditions the factors $A_{j}$ ($j\in \mathcal{J}$) canonically embed into a one-relator product $(\bigast_{j \in \mathcal{J}} A_{j} )/ \langle \! \langle r \rangle \! \rangle$. One result in this context is the \emph{Freiheitssatz for locally indicable groups} (cf. Theorem~\ref{HowFS}) independently proved by J. Howie (see \cite{ArtHowOnpairs}), S. Brodskii (see \cite{ArtBrod80},\cite{ArtBrod84}) and H. Short (see \cite{PhDShort}). It is not known whether this generalized Freiheitssatz can be further generalized to torsion-free groups without stronger assumptions. However, several Freiheitss\"atze for one-relator products of torsion-free groups are known under restrictions to exponent sums of $r$ due to A. Klyachko (see \cite{kly93}) and under the condition that $r$ has a syllable length smaller or equal 
$8$ due to M. Edjvet and J. Howie (see \cite{edjhow20}). There are also further Freiheitss\"atze for one-relator products assuming small cancellation conditions on the symmetric closure of the relation $r$ (see \cite{ArtJuhaszFS}).

Another way of generalizing the Freiheitssatz of W. Magnus is to consider more than one relation $r$. Seeing the additional relations as part of the underlying group, these results are called \emph{Freiheitss\"atze for one-relator quotients}. Generalizations of that kind can be found in \cite{ArtHowSaeFS}, \cite{ArtHowSaeMagSub} due to J. Howie and M. Saeed. The aim of this article is to prove the following Freiheitssatz for special one-relator quotients which generalizes Chapter 4 of the author's dissertation \cite{DA}. After formulating this result we shortly discuss its assumptions and its connection to the results of \cite{ArtHowSaeFS}.

\begin{main} \label{main}
Let $A$ and $B$ be two free groups and let $G=A \ast_{U} B$ be an amalgamated product, where $U$ is a maximal cyclic subgroup in both factors. Further, let $r$ be an element of $G$ which is neither conjugate to an element of $A$ nor $B$. Then $A$ and $B$ canonically embed into the quotient group $G/ \langle \! \langle r \rangle \! \rangle_{G}$.
\end{main}

The following example shows that the assumption on $U$ to be a maximal cyclic subgroup cannot be omitted. 

\begin{example}
Let $A:= \langle a \mid \rangle$, $B:=\langle b,c,d \mid \rangle$ and $r:=a(bc)^{-1}$. Further, let $U$ be generated by $p:=a^2$ in $A$ and by $q := (bc)^{2}d^{2}$ in $B$ . Then we have in $(A \ast_{U} B)/\langle \! \langle r \rangle \! \rangle$:
\begin{eqnarray*}
(bc)^{2}d^{2} \ = \ a^{2} \ \ \Leftrightarrow \ \ (bc)^{2}d^{2} \ = \ (bc)^{2}  \ \ \Leftrightarrow \ \ d^{2} = 1
\end{eqnarray*}
Thus, $\langle c,d \mid \rangle$ does not embed into $(A \ast_{U} B)/\langle \! \langle r \rangle \! \rangle$ even though $q$ contains the basis element $b$ of $B$. 
\end{example}

Note that Main Theorem~\ref{main} is in part already contained in \cite[Theorems 3.1, Theorem 4.2 and Theorem 5.1]{ArtHowSaeFS} by J. Howie and M. Saeed since many amalgamated products of free groups over maximal cyclic subgroups in both factors are limit groups (in the sense of Z. Sela). For example the fundamental groups $\pi_{1}(S_{g}^{+})$ with $g \geqslant 2$ resp. $\pi_{1}(S_{g}^{+})$ with $g \geqslant 4$ of a compact, orientable resp. compact, non-orientable surface of genus $g$ are elementary equivalent to free groups and therefore limit groups. Other examples of amalgamated products of free groups over maximal cyclic subgroups in both factors that are limit groups can be found in the class of so-called \emph{doubles of free groups} $F \ast_{\langle w \rangle \cong \langle \widetilde{w} \rangle} \widetilde{F}$, where $w$ is an element of a free group $F$ and $\widetilde{F}, \widetilde{w}$ are copies of $F,w$. These doubles of free groups are word-hyperbolic if and only if $\langle w \rangle$ is a maximal cyclic subgroup in $F$ (see \cite{gorwil10}).

However, there are also many amalgamated products of free groups over maximal cyclic subgroups in both factors which are not limit groups. One such example is the group
\begin{eqnarray*}
G \ \ = \ \ \langle a,b,c,d,z \, \mid \, [a,b][c,d]=z^{4} \rangle \ \ = \ \ \langle a,b \mid \ \rangle \underset{\langle [a,b] \rangle \ \cong \ \langle z^{4}[c,d]^{-1} \rangle}{\ast} \langle c,d,z  \mid \ \rangle.
\end{eqnarray*}
In \cite{CCE91}, J. Comerford, L. Comerford and C. Edmunds have shown that a non-trivial product of two commutators in a free group can never be more than a cube. It follows that $z$ is in the kernel of every homomorphism $\varphi \colon G \rightarrow F_{n}$, where $F_{n}$ is the free group of rank $n$. Therefore, the finitely generated group $G$ cannot be $\omega$-residually free which is equivalent to $G$ not being a limit group (see \cite{ArtTheorie}).\\

In order to prove Main Theorem~\ref{main} we study special products of groups which we call tree-products (see Definition~\ref{deftreepr}) and finally introduce a Freiheitssatz for tree-products (see Theorem~\ref{thmFrTP}). Main Theorem~\ref{main} follows directly from this Freiheitssatz. As a tool we also prove the following lemma concerning one-relator products of locally indicable and free groups. Recall that a group $G$ is \emph{indicable} if there exists an epimorphism from $G$ to $\mathbb{Z}$. A group $G$ is \emph{locally indicable} if every non-trivial, finitely generated subgroup of $G$ is indicable.

\begin{lemma} \label{lemembl}
Let $H$ be a locally indicable group, $A$ be a free group with basis $\mathcal{A}$ and let $p:=v^{-1}gv$ be an element of $A$, where $v,g$ use letters from disjunct sets of basis elements from $\mathcal{A}$ and where $g_{j}$ is cyclically reduced as well as no proper power. Further, let $r$ be an element of $A \ast H$ which is not conjugate to an element of $A$ nor $H$ nor $\langle p \mid \rangle \ast H$. Then $\langle p \mid \rangle \ast H$ embeds canonically into $(A \ast H) / \langle \! \langle r \rangle \! \rangle$. 
\end{lemma}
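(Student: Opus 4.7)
The plan is to reduce first to the case $p=g$ by an appropriate Nielsen automorphism, then to perform a two-case split driven by whether $r$ is conjugate into the subgroup $A_g*H$ of $A*H$ generated by the basis letters occurring in $g$ together with $H$. In all but one sub-case the Brodskii--Howie--Short locally indicable Freiheitssatz (Theorem~\ref{HowFS}) suffices; the remaining core case is treated by an amalgamated-product perspective together with an auxiliary free-generator construction.

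For the initial reduction I would introduce the automorphism $\alpha$ of $A$ which fixes every basis letter outside $\mathcal{A}_g$ and sends each $x\in\mathcal{A}_g$ to $v^{-1}xv$. This is well defined because $\mathcal{A}_v$ and $\mathcal{A}_g$ are disjoint subsets of $\mathcal{A}$, and since $g$ is a word in $\mathcal{A}_g$ the conjugations telescope to give $\alpha(g)=v^{-1}gv=p$. Extending by the identity on $H$ yields an automorphism $\tilde\alpha$ of $A*H$ that fixes both $A$ and $H$ setwise and carries $\langle g\rangle*H$ onto $\langle p\rangle*H$. Replacing $r$ by $\tilde\alpha^{-1}(r)$, whose hypotheses are equivalent to those on $r$, I may therefore assume $v=1$ and $p=g$ throughout.

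With this in place I split into cases using the decomposition $A*H = A' * (A_g * H)$, where $A_g := F(\mathcal{A}_g)$ and $A':= F(\mathcal{A}\setminus\mathcal{A}_g)$, a free product of two locally indicable groups. If $r$ is not conjugate in $A*H$ to any element of $A_g*H$, then since it is not conjugate to $A'\subseteq A$ either, Theorem~\ref{HowFS} implies that $A_g*H$ embeds in $(A*H)/\langle\!\langle r\rangle\!\rangle$, whence so does the subgroup $\langle g\rangle*H$. Otherwise $r$ is conjugate to some $r''\in A_g*H$, and the identification $(G_1*G_2)/\langle\!\langle w\rangle\!\rangle = G_1 * (G_2/\langle\!\langle w\rangle\!\rangle_{G_2})$ (valid when $w\in G_2$) reduces the problem to embedding $\langle g\rangle*H$ into $(A_g*H)/\langle\!\langle r''\rangle\!\rangle_{A_g*H}$, now with the additional feature that $g$ uses every basis letter of the ambient free group $A_g$. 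When $g$ is primitive in $A_g$, writing $A_g = \langle g\rangle * A_g''$ gives $A_g * H = (\langle g\rangle * H) * A_g''$, and Theorem~\ref{HowFS} applied to this decomposition (with $r''$ not conjugate to $A_g''$ or to $L=\langle g\rangle*H$) finishes the argument.

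The main obstacle is the sub-case in which $g$ is not primitive in $A_g$. Here I would view $A_g*H$ as the amalgamated product $A_g *_{\langle g\rangle}(\langle g\rangle*H)$, in which $\langle g\rangle$ is maximal cyclic both in $A_g$ (because $g$ is not a proper power in a free group) and in $\langle g\rangle*H$ (as a free factor of a free product), and in which $r''$ has amalgamated length at least $2$. To finish I would introduce a new free generator $t$ and work in the augmented free product $A_g*\langle t\rangle*H = A_g*(\langle t\rangle*H)$: Theorem~\ref{HowFS} yields an embedding of $\langle t\rangle*H\cong \mathbb{Z}*H$ in the augmented one-relator quotient by $r''$, which I would then pull back along the retraction $t\mapsto g$ to deduce the desired embedding of $L=\langle g\rangle*H$. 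Checking that this retraction remains injective on the embedded copy of $\mathbb{Z}*H$ is the hardest step; it is precisely where the hypothesis that $r''$ is not conjugate to $\langle g\rangle*H$ must be used in its full strength, and where the Howie--Saeed Freiheitssatz for one-relator quotients \cite{ArtHowSaeFS} provides the natural tool for ruling out the spurious identifications that would otherwise arise within $\mathbb{Z}*H$.
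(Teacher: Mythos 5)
Your reductions are fine as far as they go: passing to $p=g$ via the automorphism supported on $\mathcal{A}_g$, splitting off $A'=F(\mathcal{A}\setminus\mathcal{A}_g)$, and disposing of the cases where $r$ is not conjugate into $A_g\ast H$ or where $g$ is primitive in $A_g$ are all legitimate applications of Theorem~\ref{HowFS}. But the remaining sub-case is not a technicality to be deferred --- it is the entire content of the lemma, and your proposed treatment of it does not work. The ``augmented free product'' construction is circular: since $r''$ does not involve $t$, the quotient $(A_g\ast\langle t\rangle\ast H)/\langle\!\langle r''\rangle\!\rangle$ is just $\big((A_g\ast H)/\langle\!\langle r''\rangle\!\rangle\big)\ast\langle t\rangle$, the embedding of $\langle t\rangle\ast H$ there is immediate, and the injectivity of the retraction $t\mapsto g$ on that copy of $\mathbb{Z}\ast H$ is, verbatim, the statement you are trying to prove. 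No progress has been made. Nor can you outsource this step to \cite{ArtHowSaeFS}: those results concern one-relator quotients of limit groups, whereas here $H$ is an arbitrary locally indicable group, and the paper itself exhibits amalgams of the shape you describe that fail to be limit groups even when $H$ is free.

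The paper closes this gap with a Magnus-style induction on the length of the cyclic reduction $g$ of $p$. One picks two letters $a,b$ occurring in $g$, passes to a root/Nielsen-adjusted generator $\widetilde a$ with exponent sum $0$ in $p$, and splits on the exponent sum $r_{\widetilde a}$. If $r_{\widetilde a}\neq 0$, the $2\times 2$ exponent matrix in $\widetilde a,\widetilde b$ has full rank and the embedding follows from Howie's theorem on independent systems of equations over locally indicable groups (Theorem~\ref{thmHowequ} together with Remark~\ref{remmatrix}), applied to $A\ast_{p=c}(\langle c\mid\rangle\ast H)$. If $r_{\widetilde a}=0$, one passes to the kernel of the homomorphism to $\mathbb{Z}$ sending $\widetilde a\mapsto 1$; there the relators $r_i=\widetilde a^{-i}r\widetilde a^{i}$ form a staggered system over locally indicable groups, Corollary~\ref{corstaggered} reduces the problem to a single relator, and the conjugates $g_i$ are strictly shorter than $g$ because the letters $\widetilde a^{\pm1}$ vanish, so the induction hypothesis applies. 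Some argument of this kind --- an exponent-sum case split feeding a shorter-word or higher-rank recursion --- is unavoidable; a single application of the locally indicable Freiheitssatz, however it is packaged, cannot distinguish the subgroup $\langle p\mid\rangle\ast H$ from an arbitrary subgroup of $A\ast H$ when $p$ is not primitive.
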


We want to recall some results that are important for the proof of Main Theorem~\ref{main}. In 1981, J. Howie proved the following theorem which is known as the already mentioned \emph{Freiheitssatz for locally indicable groups}. This result was also proved independently by S. Brodskii (see \cite{ArtBrod80},\cite[Theorem~1]{ArtBrod84}) and by H. Short (see \cite{PhDShort}).

\begin{theorem}[\textbf{see \cite[Theorem 4.3 (Freiheitssatz)]{ArtHowOnpairs}}] \label{HowFS}
Suppose $G=(A \ast B) / N$, where $A$ and $B$ are locally indicable groups, and $N$ is the normal closure in $A \ast B$ of a cyclically reduced word $R$ of length at least $2$. Then the canonical maps $A \rightarrow G$, $B \rightarrow G$ are injective.
\end{theorem}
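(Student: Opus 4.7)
The plan is to reinterpret the free product $A \ast H$ as an amalgamated product over the cyclic subgroup $\langle p \rangle$, thereby exhibiting $\langle p \rangle \ast H$ as one of its factors, and then to apply a Freiheitssatz for one-relator amalgamated products of locally indicable groups over a malnormal cyclic subgroup. The key structural identification is
$$A \ast H \;\cong\; A \ast_{\langle p \rangle} (\langle p \rangle \ast H),$$
where the amalgamation identifies the cyclic subgroup $\langle p \rangle \subset A$ (generated by $v^{-1}gv$) with the free-factor copy of $\langle p \rangle$ inside $\langle p \rangle \ast H$; both sides satisfy the same universal property as the group generated by $A$ and $H$ with no extra relations.

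Next I would check the three ingredients required for an amalgamated Freiheitssatz. First, both factors are locally indicable: $A$ is free, and $\langle p \rangle \ast H \cong \mathbb{Z} \ast H$ is a free product of locally indicable groups. Second, the edge group $\langle p \rangle$ is infinite cyclic and malnormal in both factors: in $\langle p \rangle \ast H$ it is a free factor; in $A$, the malnormality follows from the hypothesis on $g$---since $g$ is cyclically reduced and no proper power in $A_g$, the subgroup $\langle g \rangle$ is malnormal in $A_g$; the disjointness of $\mathcal{A}_v$ and $\mathcal{A}_g$ makes $A_g$ a free factor of $A$ and hence malnormal in $A$; transitivity and conjugation-invariance of malnormality then give malnormality of $\langle p \rangle = v^{-1}\langle g \rangle v$ in $A$. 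Third, the hypothesis that $r$ is not conjugate to an element of $A$ or of $\langle p \rangle \ast H$ is exactly the statement that $r$ has cyclically reduced amalgamated length $\geqslant 2$ in the decomposition above (the auxiliary ``not conjugate to $H$'' clause is subsumed since $H \subset \langle p \rangle \ast H$).

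With these in place I would invoke the Freiheitssatz for one-relator amalgamated products of locally indicable groups over a malnormal cyclic subgroup---the natural extension of Howie's Theorem~\ref{HowFS} to the amalgamated setting---to conclude that both factors $A$ and $\langle p \rangle \ast H$ embed canonically into $(A \ast H)/\langle \! \langle r \rangle \! \rangle$. In particular, the desired embedding of $\langle p \rangle \ast H$ follows.

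The hard part will be securing this amalgamated-product Freiheitssatz. If unavailable off the shelf, a self-contained alternative is the following two-step procedure. First, adjoin a formal new generator $\tilde p$ to form the enlarged free product $\widetilde G := A \ast \langle \tilde p \rangle \ast H$ and apply Theorem~\ref{HowFS} directly to the decomposition $\widetilde G = A \ast (\langle \tilde p \rangle \ast H)$; the hypothesis on $r$ transfers verbatim because $r$ contains no letter $\tilde p$, so $\langle \tilde p \rangle \ast H$ embeds in $\widetilde G / \langle \! \langle r \rangle \! \rangle$. Second, descend from $\widetilde G / \langle \! \langle r \rangle \! \rangle$ to $(A \ast H) / \langle \! \langle r \rangle \! \rangle$ by imposing the defining relation $\tilde p = v^{-1}gv$, and use the malnormality of $\langle p \rangle$ in $A$ together with the free-factor structure of $\langle p \rangle$ inside $\langle p \rangle \ast H$ to show that this descent collapses no pair of distinct elements of $\langle p \rangle \ast H$.
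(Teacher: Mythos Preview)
Your proposal is aimed at Lemma~\ref{lemembl}, not at Theorem~\ref{HowFS}; the latter is quoted from \cite{ArtHowOnpairs} and is not proved in the paper. Treating Lemma~\ref{lemembl} as the intended target, both routes you sketch have a genuine gap.

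The primary route invokes a ``Freiheitssatz for one-relator amalgamated products of locally indicable groups over a malnormal cyclic subgroup.'' No such statement is available off the shelf, and proving results of exactly this flavour (for free factors) is the purpose of the present paper; Lemma~\ref{lemembl} is one of the tools used to get there. Invoking such a theorem here is circular within the paper's logic. Your alternative two-step route does not rescue this. In Step~1, since $r$ contains no $\tilde p$, one has $\widetilde G / \langle\!\langle r \rangle\!\rangle \cong \bigl((A \ast H)/\langle\!\langle r \rangle\!\rangle\bigr) \ast \langle \tilde p \rangle$, so the embedding $\langle \tilde p \rangle \ast H \hookrightarrow \widetilde G / \langle\!\langle r \rangle\!\rangle$ records only that $H$ embeds (already Theorem~\ref{HowFS}) and that the free letter $\tilde p$ survives; it says nothing about $p=v^{-1}gv$ in $(A\ast H)/\langle\!\langle r \rangle\!\rangle$. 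In Step~2, imposing $\tilde p = p$ sends $\langle \tilde p \rangle \ast H$ onto the subgroup generated by $p$ and $H$ in $(A\ast H)/\langle\!\langle r \rangle\!\rangle$, and asking whether this map is injective is precisely the lemma to be proved. Malnormality of $\langle p \rangle$ in $A$ is a property of $A$ before quotienting by $r$ and gives no control after the quotient.

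The paper's proof proceeds instead by induction on $|g|$: when $|g|=1$ the element $p$ is primitive, so $A \cong \langle p \rangle \ast (A/\langle\!\langle p \rangle\!\rangle)$ and Theorem~\ref{HowFS} applies directly; for $|g|>1$ one arranges (via a root extraction and basis change) a generator $\widetilde a$ with $p_{\widetilde a}=0$, and then either $r_{\widetilde a}=0$ and one passes to the kernel of $\widetilde a \mapsto 1$ to obtain a staggered presentation with strictly shorter core (Corollary~\ref{corstaggered} plus the induction hypothesis), or the exponent-sum matrix has full rank and the embedding follows from Theorem~\ref{thmHowequ} via Remark~\ref{remmatrix}.
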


J. Howie obtains this Freiheitssatz by studying systems of equations over some group $G$. A finite system $\mathcal{W}$ of $m \in \mathbb{N}$ equations $w_{i}(x_{1},x_{2},\dots x_{n})$ ($1 \leqslant i \leqslant m$) in the variables $x_{1},x_{2},\dots x_{n}$ over the group $G$ corresponds to the presentation $S:=\langle G,x_{1},x_{2},\dots x_{n} \mid w_{1},w_{2},\dots,w_{m} \rangle$. The system $\mathcal{W}$ is said to \emph{have a solution over $G$} if it has a solution in some group containing $G$ as a subgroup. 

\begin{remark} [\textbf{cf. \cite[Proposition 2.3]{ArtHowOnpairs}}] \label{remmatrix}
The group $G$ embeds into $S$ if and only if the system $\mathcal{W}$ has a solution over $G$.
\end{remark}

Let $M$ be the $(m \times n)$ matrix whose $(i,j)$-th entry is the sum of the exponents of $x_{j}$ occurring in the word $w_{i} \in G \ast \langle x_{1} \rangle \ast \dots \ast \langle x_{n} \rangle$. A system $\mathcal{W}$ is called \emph{independent} if the associated matrix $M$ has rank $m$. It is conjectured that any independent system of equations over any group $G$ has a solution over $G$. J. Howie proved for example the following special case of that conjecture.

\begin{theorem} [\textbf{cf. \cite[Corollary 4.2]{ArtHowOnpairs}}] \label{thmHowequ}
Let $G$ be a locally indicable group. Then every independent system of equations over $G$ has a solution over $G$. 
\end{theorem}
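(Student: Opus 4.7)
My approach is induction on the number of equations $m$, using Howie's Freiheitssatz for locally indicable groups (Theorem~\ref{HowFS}) as the engine of the inductive step. The base case $m=0$ is immediate: $G$ embeds into $G \ast F(x_1, \ldots, x_n)$. For the inductive step I would first \emph{normalise} the system. Replacing a variable $x_j$ by $x_j^a x_k^b$ is a Tietze transformation and corresponds to an integer column operation on the exponent matrix $M$, while replacing an equation $w_i$ by $w_i w_j^c$ ($j \neq i$) is an equivalent rewriting of the system which performs a row operation and preserves the normal closure $\langle\!\langle w_1, \ldots, w_m \rangle\!\rangle$. Hence any unimodular row/column transformation of $M$ is admissible without changing whether the system has a solution over $G$. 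Since $M$ has rank $m$, Smith normal form allows me to assume that $w_i$ has exponent sum $d_i > 0$ in $x_i$ and exponent sum $0$ in every other $x_j$, with $x_{m+1}, \ldots, x_n$ contributing nothing to the exponent matrix.

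\textbf{Inductive step.} After the normalisation, $w_m$ involves only $x_m$ in its exponent sum. Write $G \ast F(x_1, \ldots, x_n) = L \ast \langle x_m \rangle$ with $L := G \ast F(x_1, \ldots, \widehat{x_m}, \ldots, x_n)$; the group $L$ is locally indicable by Brodskii's theorem that a free product of locally indicable groups is again locally indicable. Cyclically reducing $w_m$ in this free product decomposition yields either a word of syllable length $\geqslant 2$ or, if $w_m$ is conjugate to a pure power of $x_m$, the element $x_m^{d_m}$. In either case Theorem~\ref{HowFS} gives that $L$ embeds canonically into $Q := (L \ast \langle x_m \rangle)/\langle\!\langle w_m \rangle\!\rangle$, so in particular $G \hookrightarrow Q$. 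A companion theorem in the theory of locally indicable one-relator products (also essentially due to Brodskii/Howie) then tells me that $Q$ is itself locally indicable. The surviving submatrix for $\{w_1, \ldots, w_{m-1}\}$ has zero $x_m$-column and a diagonal $(m-1) \times (n-1)$ block with nonzero diagonal entries, hence rank $m-1$. Regarding $\{w_1, \ldots, w_{m-1}\}$ as an independent system of equations over the locally indicable base $Q$ in the fresh variables $\widetilde x_1, \ldots, \widetilde x_{n-1}$ (to be identified with the corresponding $x_j \in Q$ in the final quotient), the induction hypothesis applied to $Q$ yields $G \hookrightarrow Q/\langle\!\langle w_1, \ldots, w_{m-1}\rangle\!\rangle = S$, as desired.

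\textbf{Main obstacle.} The subtle point is the bookkeeping that turns $\{w_1, \ldots, w_{m-1}\}$ into a genuine independent system of equations over $Q$ in the technical sense of Remark~\ref{remmatrix}: once $x_1, \ldots, \widehat{x_m}, \ldots, x_n$ are concrete elements of $Q$ they cease to be formal variables, so one must introduce fresh variables $\widetilde x_j$ together with identifying relations $\widetilde x_j x_j^{-1}$, and check that the resulting enlarged exponent matrix still has rank $m-1$. A secondary difficulty is propagating local indicability through every one-relator quotient, which relies on a separate (and non-trivial) theorem in the locally indicable one-relator theory. An alternative route that sidesteps both obstacles is Howie's topological tower argument from \cite{ArtHowOnpairs}: one assumes a hypothetical counterexample and uses covering-space techniques to produce a tower of $2$-complexes whose fundamental groups are all quotients of locally indicable groups, then extracts a contradiction from the abelianisation obstruction coming from independence.
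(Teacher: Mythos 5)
The paper does not prove Theorem~\ref{thmHowequ} at all: it is imported verbatim from Howie's article, where it is deduced from the tower machinery for staggered pairs of $2$-complexes. So your attempt has to stand on its own, and as written the inductive step contains two genuine gaps. The first is the proper-power issue, which you gloss over: to apply the induction hypothesis over $Q=(L\ast\langle x_m\rangle)/\langle\!\langle w_m\rangle\!\rangle$ you need $Q$ to be locally indicable, and by Theorem~\ref{Howlocind} this holds exactly when $w_m$ is not a proper power in $L\ast\langle x_m\rangle$. Independence of the system does not rule this out: take $G=\langle g\rangle\cong\mathbb{Z}$, $w_1=x_1^2$, $w_2=(x_2\,gx_1g^{-1}x_1^{-1})^2$; the exponent matrix is $\mathrm{diag}(2,2)$, yet $w_2$ is a proper power, so $Q$ contains $2$-torsion and is not locally indicable, and your induction stops after one step.

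The second gap is the one you flag as "bookkeeping", but it is not bookkeeping --- it is where the argument breaks. Once $x_1,\dots,\widehat{x_m},\dots,x_n$ are elements of $Q$ there are no free variables left, and your repair (fresh variables $\widetilde x_j$ plus identifying relations $\widetilde x_j x_j^{-1}$) produces a system with $(m-1)+(n-1)$ equations in only $n-1$ variables; its exponent matrix has rank at most $n-1$, so for $m\geqslant 2$ this system is never independent and Remark~\ref{remmatrix} together with the induction hypothesis gives you nothing. (Dropping the identifying relations instead yields an independent system, but then the induction produces $\langle Q,\widetilde x\mid w_i(\widetilde x)\rangle$, which surjects onto $S$ without any control over whether $G$ survives the final identification $\widetilde x_j=x_j$.) The structural obstruction is that $Q$ is not of the form $Q_0\ast F(x_1,\dots,\widehat{x_m},\dots,x_n)$ for a locally indicable $Q_0$, which is the shape your inductive hypothesis requires. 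Your closing sentence correctly identifies Howie's tower argument as the route that avoids both problems, but it is a pointer to the actual proof rather than a proof; an elementary induction along your lines can at best recover the theorem under the extra hypothesis that no relator is a proper power and with a genuinely different (e.g.\ staggered/Freiheitssatz-style) handling of the remaining relators.
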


Aside from Theorem~\ref{thmHowequ} we also use the following result about locally indicable groups.

\begin{theorem} [\textbf{see \cite[Theorem 4.2]{ArtHowOnLocInd}}] \label{Howlocind}
Let $A$ and $B$ be locally indicable groups, and let $G$ be the quotient of $A \ast B$ by the normal closure of a cyclically reduced word $r$ of length at least $2$. Then the following are equivalent:
\begin{itemize}
\item[$(i)$] $G$ is locally indicable;
\item[$(ii)$] $G$ is torsion-free;
\item[$(iii)$] $r$ is not a proper power in $A \ast B$.
\end{itemize}

\end{theorem}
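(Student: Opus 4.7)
The plan is to prove the cyclic chain $(\mathrm{i}) \Rightarrow (\mathrm{ii}) \Rightarrow (\mathrm{iii}) \Rightarrow (\mathrm{i})$. The first implication is immediate: if $1 \neq g \in G$ with $G$ locally indicable, then $\langle g \rangle$ is a non-trivial finitely generated subgroup and so admits an epimorphism onto $\mathbb{Z}$; since no finite cyclic group has such an epimorphism, $g$ has infinite order. For $(\mathrm{ii}) \Rightarrow (\mathrm{iii})$ I would argue the contrapositive: suppose $r = s^n$ with $n \geq 2$ and $s$ cyclically reduced (necessarily of syllable length $\geq 2$, since otherwise $s$ would lie in one factor and $r = s^n$ would have length $1$, contradicting $|r| \geq 2$). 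I would then show that the image $\bar s$ of $s$ in $G$ has order exactly $n$, exhibiting torsion. Clearly $\bar s^n = 1$, so the order divides $n$; if it were a proper divisor $d$, then $s^d \in \langle \! \langle s^n \rangle \! \rangle_{A \ast B}$, and a Karrass-Magnus-Solitar-style rewriting combined with iterated application of the Freiheitssatz (Theorem \ref{HowFS}) to intermediate one-relator products over locally indicable groups delivers a contradiction.

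The substantive direction is $(\mathrm{iii}) \Rightarrow (\mathrm{i})$. Given a non-trivial finitely generated subgroup $H \leq G$, the task is to produce an epimorphism $H \twoheadrightarrow \mathbb{Z}$. I would follow Howie's \emph{tower} technique on $2$-complexes. Build a presentation $2$-complex $K$ for $G$ from cellular models of $A$ and $B$ (the local indicability of $A$, $B$ supplying the required combinatorial flexibility at each factor). Let $\widetilde{K} \to K$ be the covering corresponding to $H$; pass to a finite sub-$2$-complex containing a chosen generator of $H$, and iterate by successive coverings to obtain a tower whose top is a finite $2$-complex $L$ with $\pi_1(L)$ mapping non-trivially into $H$. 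The hypothesis that $r$ is not a proper power should ensure that the lifted attaching map of the $2$-cell in $L$ is likewise not a proper power in $\pi_1$ of its $1$-skeleton. A staggered Magnus-style rewriting at this finite level then yields an epimorphism $\pi_1(L) \twoheadrightarrow \mathbb{Z}$, which composes with $\pi_1(L) \to H$ to give the required epimorphism.

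The main obstacle lies in controlling the tower process for $(\mathrm{iii}) \Rightarrow (\mathrm{i})$: one must guarantee that the non-proper-power property of $r$ genuinely transfers to the lifted attaching map (which could a priori become a proper power after passage to some finite covering), and that the chosen finite sub-$2$-complexes retain enough of the free-product structure at each level to permit the Magnus-style reduction. Managing the interplay between the local indicability of $A$ and $B$ and the combinatorics of the lifted attaching words is the delicate step on which the whole implication hinges.
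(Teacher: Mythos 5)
First, a point of reference: the paper does not prove this statement at all --- it is quoted verbatim from Howie's article \cite{ArtHowOnLocInd} (Theorem 4.2 there) and used as an imported tool, so there is no internal proof to compare against. Your outline does follow the strategy of Howie's original proof (towers of $2$-complexes for $(iii)\Rightarrow(i)$, torsion coming from the root of the relator for $(ii)\Rightarrow(iii)$), but as written it is a plan rather than a proof, and the two places where all the difficulty lives are exactly the places you leave open.

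For $(ii)\Rightarrow(iii)$ you need, at minimum, that the image $\bar s$ of $s$ in $G=(A\ast B)/\langle\!\langle s^{n}\rangle\!\rangle$ is non-trivial; your appeal to ``a Karrass--Magnus--Solitar-style rewriting combined with iterated application of the Freiheitssatz'' is not a proof but a restatement of the problem. For one-relator \emph{groups} this is the classical KMS torsion theorem, proved via the Magnus hierarchy; for one-relator products of locally indicable factors the analogous statement (that $\bar s$ has order exactly $n$) is itself a non-trivial theorem whose proof in Howie's work is intertwined with the tower machinery you invoke for $(iii)\Rightarrow(i)$ --- you cannot treat it as a routine rewriting exercise, and Theorem~\ref{HowFS} alone does not deliver it, since the Freiheitssatz only controls the factors $A$ and $B$, not the element $s$, which has syllable length at least $2$. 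For $(iii)\Rightarrow(i)$, the obstacle you single out (that the lifted attaching map might become a proper power in a covering) is in fact not an obstacle: if a lift $\tilde r$ satisfied $\tilde r=\tilde s^{\,k}$ with $k\geqslant 2$ in the fundamental group of the $1$-skeleton of the cover, then pushing down along the covering projection would exhibit $r$ as a $k$-th power, contradicting $(iii)$. The genuine difficulties, which your sketch does not address, are elsewhere: locally indicable groups carry no preferred finite presentation, so the ``cellular models of $A$ and $B$'' are in general infinite complexes and one must make precise what a finite subcomplex of the tower retains of the free-product structure; one must prove that the tower terminates; and one must show that at the top of the tower the attaching data is staggered in a sense strong enough to run the Magnus-style exponent-sum argument and produce the epimorphism onto $\mathbb{Z}$. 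Each of these is a substantive lemma in \cite{ArtHowOnLocInd}, not a formality, so the proposal as it stands has real gaps in both non-trivial implications.
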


\section{Tree-products}

First, we recall the definition of staggered presentations in the sense of W. Magnus from \textbf{\cite[Section~II.5]{BuchLS}} with some small alteration: Instead of using only cyclically reduced elements we allow some conjugation that leads to the definition of stabilizing generators. This is done in view of further applications and has nearly no influence on the presented proof. The alteration is not necessary for proving the main theorem of this article.

\begin{definition} [\textbf{staggered presentations over free groups, cf. \cite[Section~II.5]{BuchLS}}] \label{defstaggered}
Let \mbox{$G=\langle \mathcal{X} \cup \mathcal{S} \mid \mathcal{P} \rangle$} be a group presentation, $\mathcal{I} \subseteq \mathbb{Z}$ an index set and let $\mathcal{Y} = \bigsqcup_{i\in \mathcal{I}} \mathcal{Y}_{i}$ be a subset of $\mathcal{X}$. We assume $\mathcal{P}=\{v_{j}^{-1}g_{j}v_{j} \mid j \in \mathcal{J} \}$ for a totally ordered index set $\mathcal{J}$, $v_{j} \in \langle \mathcal{S} \mid \rangle$ and cyclically reduced elements $g_{j} \in \langle \mathcal{X} \mid \rangle$ such that each $g_{j}$ contains at least one element of $\mathcal{Y}$. For every $p_{j}:=v_{j}^{-1}g_{j}v_{j}$ we denote with $\alpha_{p_{j}}$ resp. $\omega_{p_{j}}$ the smallest resp. largest index $i$ such that $p_{j}$ contains an element of $\mathcal{Y}_{i}$. If $j<k$ implies $\alpha_{p_{j}} < \alpha_{p_{k}}$ as well as $\omega_{p_{j}} < \omega_{p_{k}}$, we say that $G= \langle \mathcal{X} \cup \mathcal{S} \mid \mathcal{P} \rangle$ is a \emph{staggered presentation} (\emph{over free groups}) and $\mathcal{P}$ a \emph{staggered set} of $\langle \mathcal{X} \cup \mathcal{S} \mid \, \rangle$. We call the generators from $\mathcal{S}$ \emph{stabilizing generators} and the generators from $\mathcal{X}$ \emph{non-stabilizing}.
\end{definition}

Next, we consider an example of a staggered set that is typical for the staggered sets arising in the proof of our main theorem.

\begin{example} \label{exstaggeredset}
We define
\begin{eqnarray*}
\mathcal{X} &=& \{a_{i} \mid i \in \mathbb{Z}\} \cup \{b_{i} \mid i \in \mathbb{Z}\} \cup \{c_{i} \mid i \in \mathbb{Z}\} \\
\mathcal{Y}_{0} &=& \{a_{0}\}, \ \ \mathcal{Y}_{1} \, = \, \{b_{1},c_{1}\}, \ \ \mathcal{Y}_{2} \, = \, \{a_{2}\}, \ \ \mathcal{Y}_{3} \, = \, \{a_{3},b_{3},c_{3}\}, \\ 
\mathcal{Y}_{4} &=& \{b_{4},c_{4}\}, \ \ \mathcal{Y}_{5} \, = \, \{a_{5}\} \ \ \text{and} \ \ \mathcal{Y}_{6} \, = \, \{a_{6}\}, \ \ \mathcal{S} \, = \, \emptyset.
\end{eqnarray*}
Further, let
\begin{eqnarray*}
p_{-2} \, = \, a_{0}b_{1}c_{1}^{-1}a_{0}a_{3}^{-1}, \ \ p_{0} \, = \, a_{2}b_{3}c_{3}^{-1}a_{2}a_{5}^{-1} \ \ \text{and} \ \ p_{1} \, = \, a_{3}b_{4}c_{4}^{-1}a_{3}a_{6}^{-1}.
\end{eqnarray*}
Then $P = \{p_{-2},p_{0},p_{1}\}$ is a staggered set of $\langle \mathcal{X} \mid \, \rangle = \langle \mathcal{X} \cup \mathcal{S} \mid \, \rangle$, where $\mathcal{Y} = \bigsqcup_{i=0}^{6} \mathcal{Y}_{i} \subset \mathcal{X}$.
\end{example}

\begin{definition} [\textbf{tree-products}] \label{deftreepr}
Let $\mathcal{T}$ be a tree. We associate a free group $F$ with basis $\mathcal{F} \neq \emptyset$ to every vertex of $\mathcal{T}$. Every edge of $\mathcal{T}$ between two vertices with vertex-groups $K$ and $L$ is associated to an \emph{edge-relation} $p=q$, where $p \in K$ and $q \in L$. We call $p$ resp. $q$ an \emph{edge-word} of $K$ resp. $L$ and say that $K$ and $L$ are connected over the edge-relation $p=q$. We demand that no edge-word is a proper power in its vertex-group and that for each vertex-group $F$ the set of all edge-words of $F$ forms a staggered set of $F$. Let $\mathfrak{G}$ be the union of the bases of all vertex-groups and let $\mathfrak{R}$ be the union of all edge-relations. Then we call the group $G:= \langle \mathfrak{G} \mid \mathfrak{R} \rangle$ a \emph{tree-product (associated to $\mathcal{T}$)}. We refer to a vertex-group that is associated to a leaf of $\mathcal{T}$ as a \emph{leaf-group}. If $\mathcal{T}$ only contains one vertex, we do not consider that vertex as a leaf.

We call a tree-product, associated to a subtree of $\mathcal{T}$, a \emph{subtree-product} of $G$. A subtree $\mathcal{B}$ of $\mathcal{T}$ is a \emph{branch} if we get a tree by deleting every vertex of $\mathcal{B}$ in $\mathcal{T}$ and every edge of $\mathcal{T}$ that is connected to a vertex of $\mathcal{B}$. We call a tree-product, associated to a branch of $\mathcal{T}$, a \emph{branch-product} of $G$.

Let $r$ be an element of $G$ and $A$ a leaf-group of $G$. We say that $r$ \emph{uses} the leaf-group $A$ with basis $\mathcal{A}$ if every presentation of $r$, written in the generators from $\mathfrak{G}$, uses at least one generator from $\mathcal{A}$.

Let the \emph{size} $|G|$ of $G$ be the number of its vertex-groups. Finally, let $\mathcal{P}$ be the set of the edge-words of all leaf-groups in $G$ and let $\mathcal{P}'$ be the set of all cyclical reductions of the elements of $\mathcal{P}$. Then we call $\sigma := \Sigma_{p \in \mathcal{P}'} |p|$ the \emph{boundary-length} of $G$.
\end{definition}

\begin{figure}[H]
\centering
\includegraphics[scale=0.28]{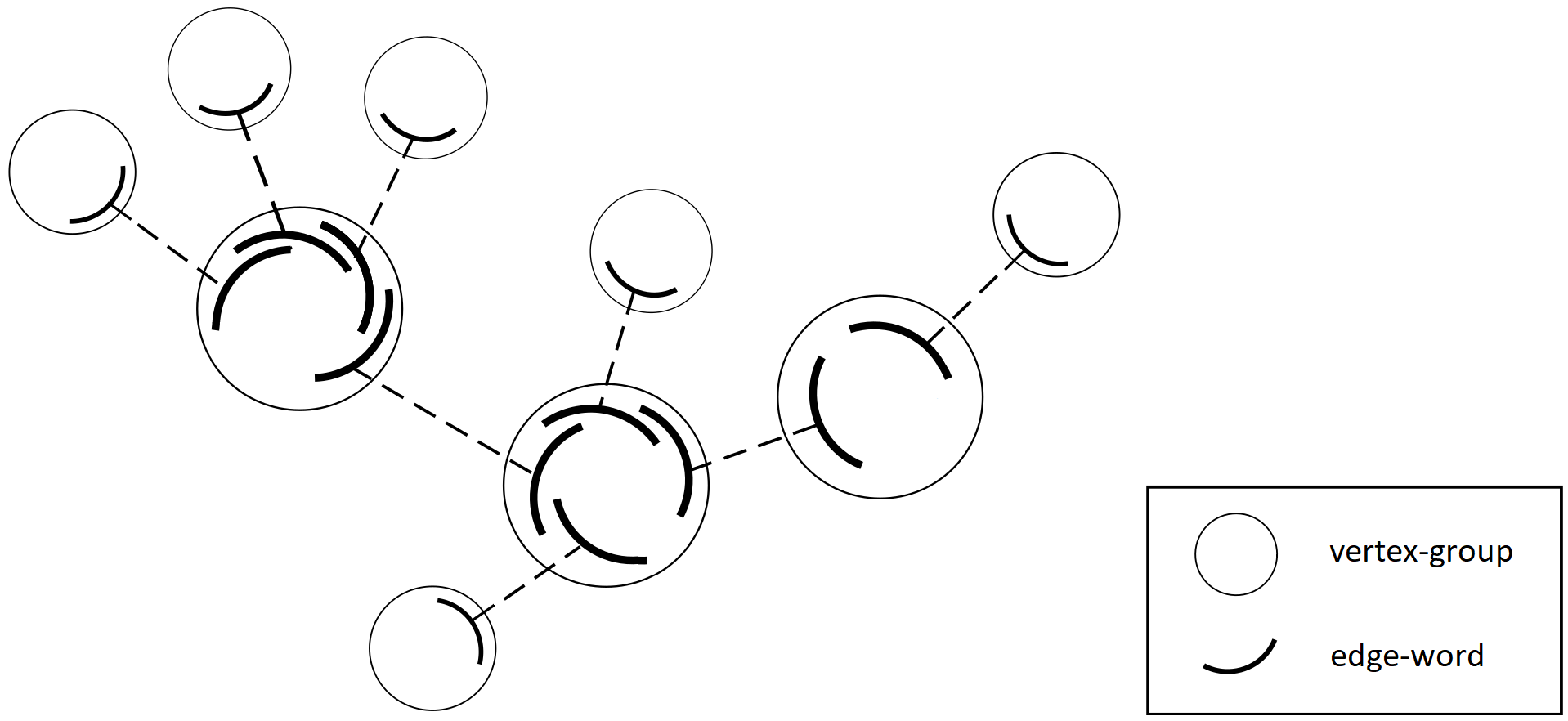}
\captionsetup{width=.8\linewidth}
\caption{Illustration for a tree-product of size 9 with 6 leaf-groups}
\label{picstaggeredset}
\end{figure}

\begin{notation}
Let $G$ be a tree-product and let $S$ be a subtree-product of $G$. We denote by $G \ominus S$ the free product of subtree-products of $G$ arising by deleting all generators and edge-relations of $S$ along with the edge-relations of all edges adjacent to $S$ from the presentation of $G$. Note that $G \ominus B$ is a tree-product for every branch-product $B$ of $G$. Analogously, we define $G \ominus \mathcal{M}$ for a free product $G$ of tree products and a set $\mathcal{M}$ of subtree-products of the tree-products from $G$. 
\end{notation}

\begin{lemma} \label{lemTPlocind}
Tree-products are locally indicable groups.
\end{lemma}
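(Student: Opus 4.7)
My plan is to induct on the size $|G|$ of the tree-product. For the base case $|G|=1$ the tree has a single vertex and no edges, so by definition $G$ is simply a free group on a non-empty basis, which is locally indicable; there is nothing to show.

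For the inductive step I would fix $n \geq 2$, assume the lemma for all tree-products of size at most $n-1$, and let $G$ be a tree-product of size $n$. The underlying tree $\mathcal{T}$ then has at least two leaves, so I would pick one leaf-group $A$ of $G$, with its unique incident edge-relation $p = q$ (where $p \in A$ and $q \in V$ for the adjacent vertex-group $V$), and set $G' := G \ominus A$. Removing a leaf from $\mathcal{T}$ (together with its incident edge) yields a smaller tree; the remaining edge-words of each vertex-group of $G'$ form a subset of the original staggered set and are therefore still staggered, and each is still not a proper power. Hence $G'$ is a tree-product of size $n-1$, and by the inductive hypothesis $G'$ is locally indicable. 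The leaf-group $A$ is free, so it is locally indicable as well.

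The heart of the argument is to realise $G$ as a one-relator quotient of $A \ast G'$ and invoke Theorem~\ref{Howlocind}. Since edge-words are non-trivial and are not proper powers in their respective vertex-groups, we have $\langle p \rangle \cong \mathbb{Z} \cong \langle q \rangle$, and
\begin{equation*}
G \ = \ A \ast_{\langle p \rangle \, = \, \langle q \rangle} G' \ = \ (A \ast G') \, / \, \langle \! \langle pq^{-1} \rangle \! \rangle.
\end{equation*}
The word $r := pq^{-1}$ has syllable length exactly $2$ in the free product $A \ast G'$ (one non-trivial syllable $p \in A$, one non-trivial syllable $q^{-1} \in G'$), and it is cyclically reduced. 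A short check on free-product normal forms shows that $r$ is not a proper power: if $r = w^{n}$ with $n \geq 2$ and $w$ cyclically reduced of syllable length $k$, then $k \geq 2$ forces $w^{n}$ to have syllable length $nk \geq 4$, while $k = 1$ would force $r = w^n$ to lie entirely in one factor, contradicting that $r$ has exactly one syllable in each factor.

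All hypotheses of Theorem~\ref{Howlocind} are therefore satisfied, so $G = (A \ast G') / \langle \! \langle pq^{-1} \rangle \! \rangle$ is locally indicable, completing the induction. I do not foresee a serious obstacle: the real content is borrowed from Howie's Theorem~\ref{Howlocind}, and my remaining duties are the routine syllable-length bookkeeping above and the verification that deleting a leaf preserves both the staggeredness and the no-proper-power requirements of Definition~\ref{deftreepr}, both of which are immediate.
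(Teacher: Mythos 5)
Your proof is correct and follows essentially the same route as the paper: induct on the size of the tree-product, peel off a leaf-group $A$, write $G=(G\ominus A)\ast_{p=q}A$, and apply Theorem~\ref{Howlocind}. The only difference is that you spell out the routine verifications (staggeredness of the remaining edge-words, $pq^{-1}$ cyclically reduced of length $2$ and not a proper power) that the paper leaves implicit.
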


\begin{proof}
Let $G$ be a tree-product. Our proof is by induction over the size $|G|$. For the induction base $|G|=1$, we note that, according to Definition~\ref{deftreepr}, all vertex-groups of $G$ are free and therefore locally indicable. For the induction step let $G$ be a tree-product with size $n+1$ for some $n \in \mathbb{N}$. Further, let $A$ be a leaf-group of $G$ with edge-relation $p=q$, where $p \in A$. We have $G=(G \ominus A) \ast_{q=p} A$. By the induction hypothesis, $G \ominus A$ is locally indicable. Therefore, $G$ is locally indicable by Theorem~\ref{Howlocind}.
\end{proof}

\section{Contracted conjugates and minimal tree-products}

In this section, we find for an arbitrary element $r$ of a free product $G \ast H$, where $G$ is a tree-product and $H$ a locally indicable group, a uniquely determined minimal subtree-product $S$ of $G$ such that $S \ast H$ contains at least one conjugate of $r$. 

\begin{definition} \label{defmintreepr}
Let $G$ be a tree-product and $H$ a locally indicable group. Further let $r$ be an element of $G \ast H$ which is not conjugate to an element of $X \ast H$ for any vertex-group $X$ of $G$. We call a presentation of a conjugate $\widetilde{r}$ of $r$ \emph{contracted conjugate} if the following properties hold:

The presentation $\widetilde{r}$ contains under all presentations of all conjugates of $r$ only generators from $H$ and a minimal subtree-product $T$ of $G$. Further, let $\widetilde{r}$ be of the form $\Pi_{i=1}^{n} v^{(i)}$ fulfilling the following three properties.
\begin{itemize}
\item[(i)] Every $v^{(i)}$ contains either only basis elements of a single vertex-group or is an elements of $H \backslash \{1\}$. 
\item[(ii)] Cyclically proceeding $v^{(i)}$ ($i \in \mathbb{Z}_{n}$) contain elements of different vertex-groups or of a vertex-group and $H$.
\item[(iii)] No element $v^{(i)}$ is a power of an edge-word of a leaf-group of $T$.
\end{itemize}
We call the elements $v^{(i)}$ \emph{pieces} of the contracted conjugate $\widetilde{r}$. The number $n$ of pieces of $\widetilde{r}$ is the \emph{length} of $\widetilde{r}$ and is denoted by $||\widetilde{r}||_{G \ast H}$ (or $||\widetilde{r}||$). Finally, we call the minimal subtree-product $T$ \emph{minimal tree-product} of $r \in G \ast H$.
\end{definition}

The following lemma shows that Definition~\ref{defmintreepr} is well-defined.

\begin{lemma} \label{lemtreeprunique}
Let $G$ be a tree-product and $H$ be a locally indicable group. Further, let $r$ be an element of $G \ast H$ which is not conjugate to an element of $X \ast H$ for any vertex-group $X$ of $G$. Then the minimal tree-product of $r$ is uniquely determined.
\end{lemma}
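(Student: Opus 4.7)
The plan is to argue by contradiction: suppose $T_1, T_2$ are two distinct subtree-products of $G$ that are both minimal among subtree-products $T$ such that some conjugate of $r$ lies in $T \ast H$, and derive a contradiction using the conjugacy theorem for amalgamated free products. First, each $T_i$ must have at least two vertex-groups, for otherwise $T_i$ is a single vertex-group $X$ of $G$ and $r$ is conjugate into $X \ast H$, contradicting the hypothesis on $r$. In particular $T_1$ has at least two leaf-groups, and by minimality neither $T_i$ is contained in the other.

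The key combinatorial step is to choose a leaf-group $A$ of $T_1$ with $A \notin T_2$ such that $T_2$ is contained in the component $\Gamma^+_A$ of $\Gamma \setminus e_A$ not containing $A$, where $e_A$ is the unique edge of $T_1$ incident to $A$. If $T_1$ and $T_2$ share a vertex-group, then since $T_1 \not\subseteq T_2$ the induced subgraph of $T_1$ on $V(T_1) \setminus V(T_2)$ is non-empty, and any connected component contains a leaf $A$ of $T_1$; with this $A$ we have $T_1 \cap T_2 \subseteq T_1 \ominus A$, and $T_2$ is connected with $A \notin T_2$, whence $T_2 \subseteq \Gamma^+_A$. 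If $T_1$ and $T_2$ are vertex-disjoint, let $u_0$ be the vertex of $T_1$ closest to $T_2$ in $\Gamma$; since $|T_1| \geq 2$ there is a leaf $A \neq u_0$ of $T_1$, and the unique $\Gamma$-path from $T_2$ to $A$ first reaches $u_0$, then traverses $T_1$ and approaches $A$ through its $T_1$-neighbour, again giving $T_2 \subseteq \Gamma^+_A$.

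Having fixed such a leaf $A$, decompose $G = G^- \ast_C G^+$, where $G^{\pm}$ is the sub-tree-product on $\Gamma^{\pm}_A$ and $C = \langle p_A \rangle = \langle q_A \rangle$ is the edge subgroup, with $p_A \in A \subseteq G^-$; then $G \ast H = G^- \ast_C (G^+ \ast H)$. After replacing $r$ by a conjugate in $T_1 \ast H$ and conjugating inside this subgroup, one may assume the representative is cyclically reduced in the amalgamated product $T_1 \ast H = A \ast_C ((T_1 \ominus A) \ast H)$. By the minimality of $T_1$ this representative cannot lie in $A$ alone (else $r$ is conjugate into the vertex-group $A$) nor in $(T_1 \ominus A) \ast H$ alone (else $T_1 \ominus A$ is a strictly smaller witness, contradicting minimality), so its cyclically reduced length is at least $2$. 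Since $A \subseteq G^-$, $(T_1 \ominus A) \ast H \subseteq G^+ \ast H$, and the amalgamating subgroup $C$ agrees in both decompositions, this form remains cyclically reduced in $G^- \ast_C (G^+ \ast H)$, still of length at least $2$. Meanwhile, a conjugate of $r$ inside $T_2 \ast H$ lies in $G^+ \ast H$ and therefore has length $1$ in the same amalgamated product. Since cyclically reduced length is a conjugacy invariant in an amalgamated free product, this is a contradiction, so $T_1 = T_2$.

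The main obstacle is the combinatorial choice of a leaf $A$ of $T_1$ with the separation property $T_2 \subseteq \Gamma^+_A$, particularly in the vertex-disjoint case, together with the verification that cyclic reduction carried out inside the sub-amalgamated product $A \ast_C ((T_1 \ominus A) \ast H)$ remains valid inside the ambient amalgamated product $G^- \ast_C (G^+ \ast H)$; once these points are secured, the conjugacy invariance of cyclically reduced length closes the argument directly.
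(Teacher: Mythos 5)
Your argument is correct, and it shares the paper's overall strategy: assume two distinct minimal subtree-products $T_1$, $T_2$, choose a leaf-group $A$ of $T_1$ whose branch $Z_A=\Gamma^-_A$ is avoided by $T_2$, and then play the two candidates off against each other inside an amalgamated-product decomposition of $G\ast H$. The execution differs in two respects. First, the decomposition: the paper amalgamates over the large subgroup $S=(T_1\ominus A)\ast H$, writing $G\ast H=X\ast_S Y$ with $T_1\ast H\subseteq X$ and $T_2\ast H\subseteq Y$, whereas you split along the single edge $e_A$ over the cyclic edge group $C$, i.e. $G\ast H=G^-\ast_C(G^+\ast H)$. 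Second, the closing step: the paper writes out the conjugation equation $w^{-1}\widetilde{r}w=r^*$ in normal form with respect to $X\ast_S Y$ and derives a length contradiction by hand (using that no conjugate of $\widetilde{r}$ lies in $S$, by minimality), while you quote the conjugacy theorem for amalgamated products, comparing a cyclically reduced length of at least $2$ (computed via $T_1$) against length at most $1$ (since the $T_2$-conjugate sits in the factor $G^+\ast H$). Your route has to verify one extra point, which you correctly flag and which does hold: a cyclically reduced form in $A\ast_C\big((T_1\ominus A)\ast H\big)$ stays cyclically reduced in $G^-\ast_C(G^+\ast H)$, because $A\subseteq G^-$, $(T_1\ominus A)\ast H\subseteq G^+\ast H$, and $C$ is literally the same subgroup in both splittings. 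In return you get a cleaner final step, and your two-case analysis for the existence of the separating leaf (the shared-vertex case via a pendant component of $T_1\setminus T_2$, the disjoint case via the closest vertex $u_0$) supplies a detail the paper's proof only asserts. Both proofs are sound.
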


\begin{proof}
Let $\widetilde{r}$ be a contracted conjugate of $r$ that is contained in a minimal tree-product $T$. For an arbitrary leaf-group $A$ of $T$ we consider the branch-products which arise from $G$ by deleting $T \ominus A$ along with all edge-relations of the edges that connect $T \ominus A$ to the rest of the tree-product. We denote the branch-product containing $A$ by $Z_{A}$ and the free product of all remaining branch-products by $R_{A}$. With the aim of obtaining a contradiction let $r^{*}$ be a contracted conjugate of $r$ that is contained in a minimal tree-product $T^{*}$ different from $T$. By definition, we have $|T|=|T^{*}|$. Since $T$ and $T^{*}$ are different there is at least one leaf-group $A$ of $T$ such that $T^{*}$ does not contain any vertex-group of $Z_{A}$. For such a leaf-group $A$ we define
\begin{eqnarray*}
X \ := \ (G \ominus R_{A}) \ast H, \ \ Y \ := \ (G \ominus Z_{A}) \ast H \ \ \text{and} \ \ S \ := \ (G \ominus Z_{A} \ominus R_{A}) \ast H.
\end{eqnarray*}
Note that 
\begin{eqnarray} \label{eqamalXY}
G \ast H \ = \ X \underset{S}{\ast} Y.
\end{eqnarray}
Since $X \cap (T^{*} \ast H)$ is a subgroup of $S$ and the tree-product $G \ominus Z_{A} \ominus R_{A}$ is smaller than $T$, the contracted conjugate $r^{*}$ is not an element of $X$. Thus, there exists an element $w \in G \ast H$ with
\begin{eqnarray*}
w^{-1} \widetilde{r} w \ = \ r^{*} \ \notin X.
\end{eqnarray*}
Since $\widetilde{r}$ is an element of $T \subset X$, $w$ is not an element of $X$. We consider the normal form $w=x_{1}y_{1}x_{2} \dots y_{m}x_{m+1}$ ($m \geqslant 1$) resp. the amalgamated product \eqref{eqamalXY}, where $x_{1}$, $x_{m+1} \in ( X \backslash S) \cup \{1\}$, $x_{i} \in X \backslash S$ for $2 \leqslant i \leqslant m$ and $y_{i} \in Y \backslash S$ for $1 \leqslant i \leqslant m$. We write
\begin{eqnarray*}
x_{m+1}^{-1} y_{m}^{-1} \cdots y_{1}^{-1} \underbrace{x_{1}^{-1} \widetilde{r} x_{1}}_{:= u} y_{1} x_{2} \cdots y_{m} x_{m+1} = r^{*} \in Y.
\end{eqnarray*}
For $x_{1}=1$ this equation contradicts the unique length of normal forms. For $x_{1} \neq 1$ we also get a contradiction because $u$ is a conjugate of $\widetilde{r}$ and can therefore not be an element of $S$.
\end{proof}

\section{Operations for tree-products}

In this section we introduce some operations for tree-products that will be helpful for the proof of our main theorem.

\begin{definition}[\textbf{root-products}] 
Let $G$ be a tree-product and $\mathcal{G}$ be the union of the bases of all vertex-groups of $G$. Further let $\mathcal{R}$ be the union of all edge-relations of $G$. We consider a subset $\mathcal{M} \subset \mathcal{G}$ that only contains basis elements of leaf-groups of $G$. For every element $a \in \mathcal{M}$ we choose an element $n(a) \in \mathbb{Z} \backslash \{0\}$. Then we say that
\begin{eqnarray*}
\widetilde{G} \ := \ \langle \mathcal{G} \cup \{ \widetilde{a} \mid a \in \mathcal{M} \} \ \mid \ \mathcal{R}, \ a = \widetilde{a}^{n(a)} \ (a \in \mathcal{M}) \rangle
\end{eqnarray*}
is a \emph{root-product} of $G$ which is obtained by extracting the \emph{roots} $\widetilde{a}$ of the elements $a \in \mathcal{M}$. 
\end{definition}

\begin{remark}
If the cyclical reductions of the edge-words of all the leaf-groups of $G$ contain at least two basis elements, every root-product of $G$ is also a tree-product.
\end{remark}

Before we introduce the next operation for tree-products we define special isomorphisms of tree-products:

\begin{definition}[\textbf{leaf-isomorphisms}] 
Let $G$ be a tree-product, $n \in \mathbb{Z} \backslash \{0\}$ and let $a,b$ be two stabilizing or two non-stabilizing generators of the same leaf-group $A$ of $G$. Then we refer to the transition from the old generator $b$ to the new generator $\bar{b} := ba^{n}$ or $\bar{b}:=a^{n}b$ as a \emph{leaf-isomorphism (of the tree-product $G$)} if the resulting group presentation is still a tree-product.
\end{definition}

\begin{remark} \label{remoneoptionok}
Note that in the stabilizing case both choices and in the non-stabilizing case at least one of the choices $\bar{b} := ba^{n}$ or $\bar{b}:=a^{n}b$ results in building a new tree-product since every edge-word  (written in the old generating set) is of the form $v^{-1}pv$ where $v$ only contains stabilizing generators and $p$ is cyclically reduced and no proper power.
\end{remark}

Root-products and leaf-isomorphisms can help to generate the conditions for applying the homomorphism described in the following definition (see Remark~\ref{remroot}).

\begin{definition}[\textbf{leaf-homomorphisms}] 
Let $G$ be a tree-product, $A$ be a leaf-group of $G$ and $H$ a locally indicable group. Let $p=q$ ($p \in A$) be the edge-relation of the edge connecting $A$ and $G \ominus A$. Further let $a$ be a basis element of $A$ which is contained in $p$ with exponential sum $0$. Then we call the homomorphism $\varphi \colon G \ast H \rightarrow \mathbb{Z}$ which maps $a$ to $1$  and all other generators of $G \ast H$ to $0$ a \emph{leaf-homomorphism} (for $a$). If we consider $G$ together with an element $r \in G \ast H$ that has exponential sum $0$ respectively $a$, we call $\varphi$ a \emph{leaf-homomorphism for $r \in G \ast H$}.
\end{definition} 

In the following, we are mostly interested in the kernels of leaf-homomorphisms.

\begin{remark} \label{remkernel}
The kernel $\ker(\varphi)$ of a leaf-homomorphism $\varphi \colon G \ast H \rightarrow \mathbb{Z}$ has the following structure: For $H$ we obtain countably infinitely many free factors $H_{i} := a^{-1} H a$ of $\ker(\varphi)$. We consider the free product of these factors as one locally indicable free factor $\widetilde{H}$ of $\ker(\varphi)$. For every generator $b \neq a$ of $A$ we obtain countably infinitely many generators $b_{i} := a^{-i} b a^{i}$ ($i \in \mathbb{Z}$). These generators build a  vertex-group $\widetilde{A}$ of $\ker(\varphi)$. For the edge-word $p \in A$ we obtain a staggered set $\mathcal{P} = \{p_{i} \mid i \in \mathbb{Z}\}$ of $\widetilde{A}$, where $p_{i} := a^{-i}pa^{i}$. Instead of $G \ominus A$ there are countable finitely many copies $X_{i} := a^{-i} (G \ominus A)a^{i}$ of $G \ominus A$. Every copy $X_{i}$ contains a copy $q_ {i} := a^{-i}qa_{i}$ of the edge-word $q$ of the edge-relation $p=q$ associated to the edge connecting $A$ with $G \ominus A$. We connect $X_{i}$ with $\widetilde{A}$ over the edge with edge-relation $p_{i}=q_{i}$ and thereby obtain a tree-product $K$ with $\ker(\varphi)=K \ast \widetilde{H}$. The edge-words $p_{i} \in \widetilde{A}$ ($i \in \mathbb{Z}$) are strictly shorter than the word $p \in A$ since all letters $a^{\pm 1}$ vanish and the word $p_{i}$ contains for every letter $b \neq a^{\pm 1}$ in $p$ exactly one letter $b_{j}$ ($j \in \mathbb{Z}$).  
\end{remark}

It is possible that a tree-product $G$ does not allow the application of a leaf-homomorphism. The following remark describes a procedure to either construct an isomorphic tree-product with strictly shorter boundary-length or to move on to a tree-product $\widetilde{G}$ allowing the application of a leaf-product such that $G$ can be embedded into $\widetilde{G}$. 

\begin{remark} \label{remroot}
Let $G$ be a tree-product, $A$ be a leaf-group of $G$ and $H$ a locally indicable group. Further, let $p=q$ ($p \in A$) be the edge-relation of the edge connecting $A$ with $G \ominus A$ and let $a$, $b$ be two non-stabilizing basis elements of $A$ which are contained in $p$ with exponential sums $p_{a}$, $p_{b} \neq 0$. First, we go over to the root-product $\widetilde{G}$ of $G$ setting $a = \widetilde{a}^{p_{b}}$. Afterwards, we consider the leaf-isomorphism $\psi$ of $A$ in $\widetilde{G}$ which is given by $\widetilde{b}= b\widetilde{a}^{p_{a}}$ or $\widetilde{b}= \widetilde{a}^{p_{a}}b$. It follows that $p$, written using the new generators $\widetilde{a}$ and $\widetilde{b}$ has exponential sum $0$ respectively $\widetilde{a}$. If $p$ does not contain the generator $\widetilde{a}$, the boundary-length of $\widetilde{G}$ respectively the new generating set is strictly smaller than the boundary-length of $G$. In the case that $p$ contains $\widetilde{a}$ there is a leaf-homomorphism $\varphi$ for the generator $\widetilde{a}$ of the leaf-group $A$ in $\widetilde{G}$. Since the only possible chances in the word lengths of $p \in G$ and $p \in \widetilde{G}$ come from the usage of the generators $a$/$\widetilde{a}$ and these generators vanish by building the kernel $\ker(\varphi)$ (see Remark~\ref{remkernel}), the length of $p_{i} \in \widetilde{A} \subset \ker(\varphi)$ is not only strictly shorter than the length of $p \in A \subset \psi(\widetilde{G})$, but also strictly shorter then the length of $p \in A \subset G$. 
\end{remark}

The following algorithm gives a contracted conjugate $\widetilde{r}^{*}$ in $\ker(\varphi)$ for a contracted conjugate $\widetilde{r}$ of an element $r$ in a tree-product $G$ that is contained in the kernel $\ker(\varphi)$ of a leaf-homomorphism $\varphi$ of $G$. In Lemma~\ref{lemalg}, we prove the functionality of that algorithm.

\begin{algorithm} \label{alg}
Let $G$ be a tree-product, $H$ a locally indicable group and $r$ an element of $G \ast H$ with contracted conjugate $\widetilde{r}$. Further let $r$ be an element of the kernel of a leaf-homomorphism $\varphi$ for a basis element $a$ of the leaf-group $A$ in $G$, where $\ker(\varphi)=K \ast \widetilde{H}$ is given with the structure described in Remark~\ref{remkernel}. The following algorithm rewrites the contracted conjugate $\widetilde{r}$ of $r \in G \ast H$ into a contracted conjugate $\widetilde{r}^{*}$ of $a^{-\ell}ra^{\ell} \in \ker(\varphi)$ ($ \ell \in \mathbb{Z}$) with $|| \widetilde{r}^{*}||_{\ker(\varphi)} \leqslant || \widetilde{r} ||_{G \ast H}$.

For reasons of symmetry, it suffices to only consider contracted conjugates $\widetilde{r}^{*}$ of $r=r_{0} \in \ker(\varphi)$: To find a contracted conjugate of $a^{-\ell}ra^{\ell} \in \ker(\varphi)$ ($\ell \in \mathbb{Z}$) we can apply the algorithm to $r=r_{0}$ and replace all indices $i$ ($i \in \mathbb{Z}$) in the resulting contracted conjugate $\widetilde{r}^{*}$ by $i+\ell$. 

Let $\widetilde{r}$ be given in the form $\widetilde{r} = \Pi_{i=1}^{n} v^{(i)}$ (see Definition~\ref{defmintreepr}). First, we step by step construct a presentation $\widetilde{r}'$ of $\widetilde{r}$  as an element of $\ker(\varphi)$. For the start we set $\lambda=0$, $\widetilde{r}'=1 \in \ker(\varphi)$ and $j = 1$.

\noindent\underline{Step 1.} We consider the $j$-th generator $e^{(j)}$ in $\widetilde{r}$. If $e^{(j)}=a$ we set $\lambda=\lambda-1$ and if $e^{(j)}=a^{-1}$ we set $\lambda=\lambda+1$. In the case $e^{(j)} \neq a^{\pm1}$ we add the letter $e_{\lambda}^{(j)}=a^{-\lambda} e^{(j)} a^{\lambda}$ to the word $\widetilde{r}'$. If $e^{(j)}$ is the last letter in $\widetilde{r}$, we go to Step 2. Elsewise we set $j=j+1$ and repeat Step 1 with the new input $\lambda$, $\widetilde{r}'$ and $j$.

The presentation $\widetilde{r}'$ given by Step 1 can be written in the form $\widetilde{r}'=\Pi_{i=1}^{n} \widetilde{v}^{(i)}$, where the subwords $\widetilde{v}^{(i)}$ for which $v^{(i)}$ does not lie in $A$ can be obtained from $v^{(i)}$ by adding the same index to every letter of $v^{(i)}$. Subwords $\widetilde{v}^{(i)}$ are trivial if and only if $v^{(i)}$ is a power $a^{k}$ ($k \in \mathbb{Z} \backslash \{0\}$). In that case the indices of the adjacent subwords $\widetilde{v}^{(i-1)}$ and $\widetilde{v}^{(i+1)}$ differ by $k$.

Now, we delete every trivial subword $\widetilde{v}^{(i)}$. If $\widetilde{v}^{(i)}$ is trivial and $\widetilde{v}^{(i-1)}$, $\widetilde{v}^{(i+1)}$ are elements of $\widetilde{H}$, we merge them into one subword. If $\widetilde{v}^{(1)}$ is trivial and $\widetilde{v}^{(2)}$, $\widetilde{v}^{(n)}$ are elements of $\widetilde{H}$, we permute the presentation $\widetilde{r}'$ cyclically by $\widetilde{v}^{(2)}$ and merge $\widetilde{v}^{(n)}\widetilde{v}^{(2)}$. We proceed analogously in the case that $\widetilde{v}^{(n)}=1$ and $\widetilde{v}^{(1)}$, $\widetilde{v}^{(n-1)} \in \widetilde{H}$. Let $\widetilde{r}'=\Pi_{i=1}^{n'} v'^{(i)}$ be the resulting presentation. Then $\widetilde{r}'$ satisfies the properties (i) and (ii) of Definition~\ref{defmintreepr}. Let $T'$ be the minimal subtree-product of $K$ such that $\widetilde{r}'$ only contains generators from $T'$ and $\widetilde{H}$. The following steps rewrite $\widetilde{r}'$ into a conjugate $\widetilde{r}^{*}$ of $r$ in $\ker(\varphi)$ that also satisfies property (iii) of Definition~\ref{defmintreepr}.

\noindent\underline{Step 2.} We replace all subwords $v'^{(i)}$ ($i \in \mathbb{Z}_{n'} = \{1,2,\dots,n'\}$) which are powers of edge-words $p$ of a leaf-group $L$ of $T'$ with edge-relation $p=q$ ($p \in L$) by the corresponding power of $q$. If $v'^{(i-1)}$ or $v'^{(i+1)}$ are elements of the vertex-group containing the edge-word $q$, we merge $v'^{(i)}$ with those subwords (after cyclically permutation if necessary). If there is a leaf-group of $T'$ such that no generator of $T'$ is used in the new presentation, we delete this leaf-group and the edge-relation of the edge connecting the leaf-group to the rest of $T'$. We repeat Step 2 with the new presentation and subtree-product $T'$ until there is no subword $v'^{(i)}$ left which is a power of an edge-word of a leaf-group of $T'$. Let $\widetilde{r}'' = \Pi_{i=1}^{n''} v''^{(i)}$ and $T''$ be the resulting presentation and subtree-product.

\noindent\underline{Step 3.} We step by step consider all leaf-groups $B$ of $T''$. Let $m$ be the number of all subwords $v''^{(i)}$ written in the basis of $B$ in the presentation $\widetilde{r}'' = \Pi_{i=1}^{n''} v''^{(i)}$. Further, let $\tau:\mathbb{Z}_{m}=\{1,2,\dots,m\} \rightarrow \mathbb{Z}_{n''}=\{1,2, \dots , n''\}$ be the function which maps $j$ to the position $i$ of the $j$-th subword written in the basis of $B$. We merge the subwords $v''^{(i)}$ from $(T'' \ominus B) \ast \widetilde{H}$ that are placed before the first, after the last or between two subwords $v''^{\tau(j)}$ ($j \in \{1,2,\dots,m\}$) to new subwords $w^{(j)}$ ($j \in \{0,1,\dots,m\})$. After conjugation with $w^{(0)}$ as well as merging $w^{(0)}$ and $w^{(m)}$ to a new $w^{(m)}$ we may write w.\,l.\,o.\,g.
\begin{eqnarray} \label{eqnf}
\widetilde{r}'' \ \ = \ \ \Pi_{j=1}^{m} v''^{(\tau(j))} w^{(j)}.
\end{eqnarray} 

In the case that there are subwords $w^{(i)}$ which correspond as elements of $\ker(\varphi)$ to powers of the edge-word $p \in B$, we merge them with their cyclical neighbours $v^{(\tau(i))}$ and $v^{(\tau(i+1))}$ to a new subword from $B$. After that, we repeatedly delete trivial subwords and merge newly cyclically adjacent $w^{(k)}$- or $v''^{(k)}$-subwords. Finally, we divide the remaining subwords from $(T'' \ominus B) \ast \widetilde{H}$ into the original subwords $v''(i)$ and return to Step 2 with the resulting presentation along with $T''$ as the new $T'$.

In the case that no subword $w^{(i)}$ of the presentation \eqref{eqnf} corresponds as an element of $\ker(\varphi)$ to a power of the edge-word $p \in B$, we repeat Step 3 with the next leaf-group of $T''$. When we checked all leaf-groups of $T''$ in one run of Step 3 without going back to Step 2, we set $\widetilde{r}^{*} := \widetilde{r}'' = \Pi_{i=1}^{n''} v''^{(i)}$.
\end{algorithm}

The following lemma secures the functionality of Algorithm~\ref{alg}.

\begin{lemma} \label{lemalg}
Algorithm~\ref{alg} ends after finitely many iterations. The element $\widetilde{r}^{*}$ given by Algorithm~\ref{alg} is a contracted conjugate of $a^{-i}ra^{i} \in \ker(\varphi)$ and the length of $\widetilde{r}^{*}$ is shorter or equal to the length of the contracted conjugate $\widetilde{r}$ of $r \in G \ast H$. 
\end{lemma}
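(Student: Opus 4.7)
The plan is to verify in turn (a) that Algorithm~\ref{alg} halts, (b) that the output $\widetilde{r}^{*}$ is a contracted conjugate of $a^{-\ell}ra^{\ell}$ in $\ker(\varphi)$ for some $\ell\in\mathbb{Z}$, and (c) the length bound $||\widetilde{r}^{*}||_{\ker(\varphi)}\leqslant||\widetilde{r}||_{G\ast H}$. Throughout I will track the pair $(|T'|,n')$ of the current subtree-product size and the current piece count, ordered lexicographically.

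For (a), Step~1 visits each letter of $\widetilde{r}$ exactly once, hence halts; the ensuing deletion of trivial pieces and merging of cyclically adjacent $\widetilde{H}$-pieces is finite. Each iteration of Step~2 either strictly decreases $n'$ (when a leaf-edge-word power is substituted and merged with at least one neighbour) or strictly decreases $|T'|$ (when an unused leaf-group and its edge-relation are removed). The outer alternation between Steps~2 and 3 halts because every return from Step~3 to Step~2 is triggered by absorbing some $w^{(i)}$ together with its two cyclical $B$-neighbours into a single piece of $B$, which strictly reduces $n'$; well-foundedness of the lexicographic order on $(|T'|,n')$ then forces termination.

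For (b), I first check that Step~1 expresses $\widetilde{r}$ inside $\ker(\varphi)$ in the generators of $K\ast\widetilde{H}$. Writing $\lambda_i=\varphi(e^{(1)}\cdots e^{(i)})$, the standard Reidemeister rewriting
\begin{equation*}
e^{(1)}\cdots e^{(N)} \;=\; \prod_{j=1}^{N}\bigl(a^{-\lambda_{j-1}}\, e^{(j)}\, a^{\lambda_j}\bigr)
\end{equation*}
combined with $\lambda_0=\lambda_N=0$ (a consequence of $\widetilde{r}\in\ker(\varphi)$) gives exactly what Step~1 computes. Writing the conjugator $w\in G\ast H$ with $\widetilde{r}=w^{-1}rw$ as $w=w_0 a^{\ell}$, $w_0\in\ker(\varphi)$, identifies $\widetilde{r}$ as conjugate \emph{in} $\ker(\varphi)$ to $a^{-\ell}ra^{\ell}$. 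All further operations in Steps~2 and~3 — substituting an edge-word $p$ of a leaf-group by the corresponding $q$ across the edge-relation $p=q$, conjugating by $w^{(0)}$, and merging cyclically adjacent pieces — preserve the group element up to conjugation inside $\ker(\varphi)$. Properties (i) and (ii) of Definition~\ref{defmintreepr} are reinstated after every merge, since a disappearing vertex-group piece forces the merging of the two adjacent $\widetilde{H}$-pieces. Property (iii) holds on termination: after Step~2 no current piece is a power of a leaf-edge-word of $T'$, and a complete pass through Step~3 without re-entry to Step~2 ensures that no piece $w^{(i)}$ sitting in $(T''\ominus B)\ast\widetilde{H}$ agrees, as an element of $\ker(\varphi)$, with a power of the edge-word $p\in B$ for any leaf-group $B$ of $T''$. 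Minimality of $T''$ among subtree-products of $K$ containing a conjugate of $a^{-\ell}ra^{\ell}$ then follows from Lemma~\ref{lemtreeprunique}.

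For (c), the key observation is that every action of Algorithm~\ref{alg} is either a re-indexing of letters (Step~1) or a merging of pieces (deletion of trivial pieces and the edge-word substitutions in Steps~2 and~3); no step ever splits a single piece into several. Consequently $n'$ is non-increasing throughout the run, and the inequality $||\widetilde{r}^{*}||_{\ker(\varphi)}\leqslant||\widetilde{r}||_{G\ast H}$ follows immediately. The main obstacle I anticipate is the book-keeping in Step~3: one must ensure that after cyclic permutation by $w^{(0)}$ and absorption of a $w^{(i)}$-piece into $B$, the re-division of the non-$B$ block into its original subwords still yields a presentation on which Step~2 can be meaningfully resumed. A careful case analysis on whether the absorbed piece lies in a single vertex-group of $(T''\ominus B)\ast\widetilde{H}$, in $\widetilde{H}$ only, or straddles the boundary, together with the monotonicity of $(|T'|,n')$, resolves this.
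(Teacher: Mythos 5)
Your parts (a) and (c) are essentially the paper's argument (the paper tracks only the non-increasing piece count and the fact that every loop back to an earlier step strictly decreases it; your lexicographic pair $(|T'|,n')$ is a reasonable refinement), and your Reidemeister-rewriting justification of Step~1 is correct. The genuine gap is in part (b), at the sentence ``Minimality of $T''$ \dots then follows from Lemma~\ref{lemtreeprunique}.'' That lemma is a \emph{uniqueness} statement: it says that two subtree-products each supporting a contracted conjugate must coincide. It cannot certify that your particular $T''$ is minimal, because a priori some other conjugate of $a^{-\ell}ra^{\ell}$ might lie in $T^{\circ}\ast\widetilde{H}$ for a strictly smaller subtree-product $T^{\circ}$; verifying properties (i)--(iii) of Definition~\ref{defmintreepr} for $\widetilde{r}^{*}$ relative to $T''$ does not by itself exclude this, and minimality over all conjugates and all presentations is exactly the remaining clause in the definition of a contracted conjugate.

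The paper closes this gap with a normal-form argument that your proposal is missing. For each leaf-group $B$ of $T''$ one forms the splitting $\ker(\varphi)=Z_{B}\ast_{p=q}\bigl((K\ominus Z_{B})\ast\widetilde{H}\bigr)$, where $Z_{B}$ is the branch containing $B$. The terminating condition of Step~3 (no $w^{(i)}$ equals a power of the edge-word $p\in B$ as an element of $\ker(\varphi)$) together with property (iii) (no $v''^{(\tau(j))}$ is such a power) says precisely that the alternating presentation \eqref{eqnf} is a cyclically reduced normal form of length at least $2$ in $B\ast_{p=q}\bigl((T''\ominus B)\ast\widetilde{H}\bigr)$, hence also in the larger amalgam above; by the normal form theorem no conjugate of $r$ lies in $(K\ominus Z_{B})\ast\widetilde{H}$. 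Since this holds for every leaf-group $B$ of $T''$, the minimal tree-product of $r$ meets every branch $Z_{B}$ and therefore contains $T''$, which yields minimality. You should also record, as the paper does, that $r\in\ker(\varphi)$ is not conjugate into $X\ast\widetilde{H}$ for any vertex-group $X$ of $K$ (otherwise $r$ would be conjugate in $G\ast H$ into a vertex-group times $H$ and would have no contracted conjugate), so that the notion of contracted conjugate in $\ker(\varphi)$ applies at all.
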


\begin{proof}
The finiteness of the algorithm and the inequality $||\widetilde{r}^{*}||_{\ker(\varphi)} \leqslant || \widetilde{r} ||_{G \ast H}$ follow from the fact that no step increases the number of subwords $\widetilde{v}^{(i)}$/$v'^{(i)}$/$v''^{(i)}$ in the current presentation and every step of the algorithm which redirects to a previous step shortens the number of those subwords.

It remains to show that $\widetilde{r}^{*}$ is indeed a contracted conjugate of $r \in \ker(\varphi)$. First, we note that $r \in \ker(\varphi)=K \ast \widetilde{H}$ (see Remark~\ref{remkernel}) cannot be conjugate to an element of $X \ast \widetilde{H}$ for a vertex-group $X$ of $K$ since otherwise $r \in G \ast H$ would be conjugate to an element of $Y \ast H$, where $Y$ is a vertex-group of $G$ and therefore would not have a contracted conjugate (see Definition~\ref{defmintreepr}).

The validity of the properties (i) and (ii) for contracted conjugates (see Definition~\ref{defmintreepr}) is maintained by every step. Moreover, the validity of property (iii) is secured by Step 2. Note that the algorithm only ends if Step 3 leaves the presentation unchanged. So the output $\widetilde{r}^{*}$ satisfies the properties (i)-(iii) of Definition~\ref{defmintreepr}. We end the proof of this lemma by showing that no conjugate of $r$ in $\ker(\varphi)$ can be contained in the free product of $\widetilde{H}$ and a smaller subtree-product than $T''$. For this purpose we consider an arbitrary leaf-group $B$ of $T''$. Similar to the proof of Lemma~\ref{lemtreeprunique} we consider the branch-products arising from $K$ by deleting all vertex-groups of $T'' \ominus B$ along with the edge-relations of the adjacent edges. We denote the branch product containing $B$ by $Z_{B}$ and write
\begin{eqnarray} \label{eqamalpr1}
\ker(\varphi) \ \ = \ \ K \ast \widetilde{H} \ \ = \ \ Z_{B} \underset{p=q}{\ast} \big((K \ominus Z_{B}) \ast \widetilde{H} \big).
\end{eqnarray}
Note that in this last run of Step 3 the presentation~\eqref{eqnf} is for every leaf-group $B$ of $T''$ a cyclically reduced normal from of a conjugate of $r=r_{0} \in \ker(\varphi)$ respectively the amalgamated product
\begin{eqnarray*}
B \ \underset{p=q}{\ast} \ \big((T'' \ominus B) \ast \widetilde{H} \big)
\end{eqnarray*}
with length greater or equal two. This presentation is in particular a cyclically reduced normal form respectively the amalgamated product from \eqref{eqamalpr1} with length greater or equal two. Thus, no conjugate of $r$ in $\ker(\varphi)$ can be contained in $(K \ominus Z_{B}) \ast \widetilde{H}$. So the minimal tree-product of $r$ contains for every leaf-group $B$ of $T''$ at least one vertex-group of $Z_{B}$. Therefore the minimal tree-product of $r$ contains the tree-product $T''$. Since $\widetilde{r}^{*} \in T'' \ast \widetilde{H}$, the tree-product $T''$ is the minimal tree-product of $r$ in $\ker(\varphi)$.

\end{proof}

\begin{remark}
Let $G$ be a tree-product, $H$ a locally indicable group and $\widetilde{r}= \Pi_{i=1}^{n} v^{(i)}$ a contracted conjugate in $G \ast H$. Further, let $\varphi$ be a leaf-isomorphism from $G \ast H$ for a leaf-group $A$. Then, by writing every piece $v(i) \in A$ using the new generators from $\varphi(A)$, we get a contracted conjugate $\widetilde{r}^{*}$ of $\varphi(G \ast H)$ with the same length.
\end{remark}

The next lemma will allow us to apply the induction hypotheses in the proofs of our embedding theorems.

\begin{lemma} \label{lemlexi} Using the notation of Remark~\ref{remkernel} (and Remark~\ref{remroot}) let $r$ be a contracted conjugate in $G \ast H$, let $T$ be the minimal tree-product of an element $r \in \ker(\varphi)$ and let $\sigma$ be the boundary-length of $T$. We denote the contracted conjugate of $a^{-i}ra^{i} \in \ker(\varphi)$ by $r_{i}$. Further let $T_{i}$ be the minimal tree-product of $r_{i}$ and let $\sigma_{i}$ be the border length of $T_{i}$. Then we have
\begin{eqnarray*}
(||r_{i}|| - |T_{i}|, \sigma_{i}) \ \ < \ \ (||r|| - |T|,\sigma)
\end{eqnarray*}
respectively the lexicographical order (where the first component is weighted higher). 
\end{lemma}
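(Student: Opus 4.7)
The plan is to combine the length bound $||r_i||_{\ker(\varphi)} \leq ||r||_{G \ast H}$ from Lemma~\ref{lemalg} with a careful comparison of the tree-products $T \subseteq G$ and $T_i \subseteq K$. Recall from Remark~\ref{remkernel} that $K$ has a central vertex-group $\widetilde{A}$ to which, for every $\lambda \in \mathbb{Z}$, a copy $X_\lambda$ of $G \ominus A$ is attached via $p_\lambda = q_\lambda$ at the copy $B^{(\lambda)}$ of the vertex-group $B$ of $G$ adjacent to $A$; moreover $A$, being a leaf-group of $G$, is a leaf of $T$. I would split the argument according to the number $k$ of distinct values taken by the running index $\lambda$ of Step~1 of Algorithm~\ref{alg} at the non-$A$ pieces of $r_i$.

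In the \emph{fragmented} case ($k \geq 2$), I would prove directly that $|T_i| > |T|$, so that combined with Lemma~\ref{lemalg} the first coordinate already drops strictly. For every used value of $\lambda$, connectivity of $T_i$ to $\widetilde{A}$ inside $K$ forces $T_i \cap X_\lambda$ to contain $B^{(\lambda)}$; pulling all of the $T_i \cap X_\lambda$'s back to $G \ominus A$, their union is exactly $T \ominus A$. Counting each $W \in T \ominus A$ by the number of sub-trees in which it lies and noting that $B$ appears in every one of the $k$ sub-trees gives
\[
|T_i| \ = \ 1 + \sum_\lambda |T_i \cap X_\lambda| \ \geq \ 1 + (|T|-1) + (k-1) \ \geq \ |T| + 1.
\]

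In the \emph{non-fragmented} case ($k = 1$), a single value of $\lambda$ at all non-$A$ pieces forces each $A$-piece of $r$ to have $a$-exponent zero, hence to map to a non-trivial element of $\widetilde{A}$. Consequently $\widetilde{A} \in T_i$ and, under the identification $A \mapsto \widetilde{A}$, $W \mapsto W^{(\lambda)}$, the tree $T_i$ has the same shape as $T$, so $|T_i| = |T|$ and $\widetilde{A}$ is a leaf of $T_i$ with edge-word $p_\lambda$. By Remark~\ref{remkernel} (together with the preparation of Remark~\ref{remroot}) the word $p_\lambda$ has strictly smaller cyclic length than $p$, while every other leaf of $T_i$ carries a copy of the edge-word of the corresponding leaf of $T$ of the same cyclic length. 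Hence $\sigma_i = \sigma - |p| + |p_\lambda| < \sigma$, and together with $||r_i|| - |T_i| \leq ||r|| - |T|$ this delivers the lexicographic inequality.

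The main obstacle is the connectivity bookkeeping in the fragmented case: one must verify that each newly used value of $\lambda$ genuinely forces an additional vertex-group into $T_i$, which rests on the fact that every path in $K$ from $\widetilde{A}$ into $X_\lambda$ must pass through $B^{(\lambda)}$. One also needs to check that the subsequent simplifications of Steps~2 and~3 of Algorithm~\ref{alg} do not spoil either inequality: any removal of a leaf from the working sub-tree is compensated for by a simultaneous reduction in piece count via the merging rules, consistently with Lemma~\ref{lemalg}.
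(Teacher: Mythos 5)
Your overall strategy is the paper's: combine $||r_{i}||\leqslant ||r||$ from Lemma~\ref{lemalg} with a vertex-group count giving $|T_{i}|\geqslant |T|$, and in the equality case use $|p_{\lambda}|_{\widetilde{A}}<|p|_{A}$ from Remark~\ref{remroot} to get $\sigma_{i}<\sigma$. The paper organizes this slightly differently and more economically: it observes that $T_{i}$ contains at least one copy of every leaf-group of $T$, so that $|T_{i}|\geqslant |T|$ follows from the fact that a rooted tree is the union of its root-to-leaf paths; equality in the first coordinate then forces $T_{i}=(a^{-i}(T\ominus A)a^{i})\ast_{q_{i}=p_{i}}\widetilde{A}$, a single shifted copy of $T$, at which point only the edge-word $p_{i}$ has changed length and $\sigma_{i}<\sigma$ follows. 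Your counting in the fragmented case is a correct quantitative refinement of the same idea.

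The one genuine problem is that your dichotomy is keyed to the wrong invariant. You define $k$ from the values of the running index $\lambda$ at the non-$A$ pieces in Step~1 of Algorithm~\ref{alg}, but the lemma concerns the final minimal tree-product $T_{i}$, and Steps~2 and~3 can delete every surviving piece lying in some copy $X_{\lambda}$ (and then delete the corresponding leaf-groups), so the final $T_{i}$ may meet strictly fewer of the $X_{\lambda}$ than $k$. If, say, $k=2$ collapses to a single surviving copy, your fragmented-case conclusion $|T_{i}|>|T|$ can fail, and one lands exactly in the configuration of your non-fragmented case, where only the boundary-length argument is available --- a case your split does not then cover. Your closing remark that the simplifications ``do not spoil either inequality'' does not repair this, since it addresses the piece count rather than the case division. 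The fix is cheap: run the same dichotomy on the number of copies $X_{\lambda}$ met by the final $T_{i}$ (equivalently, follow the paper and split on whether $|T_{i}|=|T|$ or $|T_{i}|>|T|$), after which both of your arguments go through essentially verbatim.
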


\begin{proof}
Because of Lemma~\ref{lemalg}, we have $||r_{i}|| \leqslant ||r||$. Since every rooted tree is the union of all unique paths from the root to the leafs and $T_{i}$ contains for every leaf-group $L$ of $T$ at least one copy of $L$, we also have $|T_{i}| \geqslant |T|$, thus:
\begin{eqnarray*}
||r_{i}|| - |T_{i}| \geqslant ||r|| - |T| \ \Leftrightarrow \ ||r_{i}||-|T_{i}| = ||r|| - |T| \ \Leftrightarrow \ (||r_{i}||=||r|| \land |T_{i}|=|T|)
\end{eqnarray*}
Let $p=q$ (where $p \in A$) be the edge-relation to the edge connecting $A$ with $G \ominus A$. The equality $|T_{i}|=|T|$ implicates $T_{i}=(a^{-i}(T \ominus A) a^{i}) \ast_{q_{i}=p_{i}} \widetilde{A}$. As noticed in Remark~\ref{remkernel} and Remark~\ref{remroot} we have $|p_{i}|_{\widetilde{A}} < |p|_{A}$. Every other edge-word of $T_{i}$ is obtained from the associated edge-word of $T$ by adding indices. So the lengths of the edge-words different from $p$ do not change. Altogether, we get the inequality $\sigma_{i} < \sigma$ in the case $||r_{i}||-|T_{i}| = ||r|| - |T|$.
\end{proof}

We end this section with the following definition.

\begin{definition}[\textbf{reduction- and fan-generators}] \label{defredgen}
Let $\varphi \colon G \ast H \rightarrow \mathbb{Z}$ be a leaf-homomorphism for $r$ mapping a generator $a$ of a leaf-group $A$ of $G$ to $1$. We use the notation of Remark~\ref{remkernel} and consider the element $r$ as the element $r_{0}$ in $\ker(\varphi)$. If $r_{0}$ is an element of $(\widetilde{A} \underset{p_{i}=q_{i}}{\ast} X_{i}) \ast \widetilde{H}$ for an element $i \in \mathbb{Z}$, we call $a$ a \emph{reduction-generator} of $r \in G \ast H$. Elsewise we call $a$ a \emph{fan-generator} of $r \in G \ast H$.
\end{definition}

\section{Staggered presentations}

For the proof of the Magnus-Freiheitssatz and the Magnus property of free groups (see \cite{ArtMagnus}), W. Magnus defined \emph{staggered presentations} over free groups (cf. \cite[Section~II.5]{BuchLS}, also Definition~\ref{defstaggered}). We generalize that definition firstly to the case of locally indicable groups and secondly to the case of special tree-products. For the connection between the following definition and Definition~\ref{defstaggered} see Remark~\ref{remspecialfree}.

\begin{definition}[\textbf{staggered presentations over locally indicable groups}]  \label{defstagli}
Let $\mathcal{I}$, $\mathcal{J} \subset \mathbb{Z}$ be index sets, let $U$ be an arbitrary locally indicable group, let $V_{i}$ ($i \in \mathcal{I}$) be non-trivial locally indicable groups and let $r_{j}$ ($j \in \mathcal{J}$) be cyclically reduced elements of $U \ast \bigast_{i \in \mathcal{I}} V_{i}$, such that every $r_{j}$ uses at least one of the free factors $V_{k}$ ($k \in \mathcal{I}$). Further, let $W:=\ast_{i \in \mathcal{I}} V_{i}$. We denote the smallest (resp. largest) index $\ell \in \mathcal{I}$ such that $r_{j}$ uses the free factor $V_{\ell}$ by $\alpha_{r_{j}}$ (resp. $\omega_{r_{j}}$). We call $(U \ast W) / \langle \! \langle r_{j} \mid \mathcal{J} \rangle \! \rangle$ a \emph{staggered presentation (over locally indicable groups)} if the inequalities $\alpha_{r_{m}} < \alpha_{r_{n}}$ and $\omega_{r_{m}} < \omega_{r_{n}}$ hold for all $m$, $n \in \mathcal{J}$ with $m<n$.
\end{definition}

\begin{remark} \label{remspecialfree}
Definition~\ref{defstaggered} is apart from the stabilizing generators the special case of Definition~\ref{defstagli} for free groups. This becomes apparent in the following way: Let $G= \langle \mathcal{X} \cup \mathcal{S} \mid \mathcal{P} \rangle$ be a staggered presentation in the sense of Defintion~\ref{defstaggered} with respect to subsets $\mathcal{Y}_{i}$ ($i \in \mathcal{I}$) of $\mathcal{X}$. By cyclically reducing all elements of $\mathcal{P}$ we get a set $\mathcal{P}'=\langle r_{j} \mid j \in \mathcal{J} \rangle$ of cyclically reduced relations satisfying 
\begin{eqnarray*}
\langle \mathcal{X} \cup  \mathcal{S} \mid \mathcal{P} \rangle \ \ = \ \  \langle \mathcal{X} \cup  \mathcal{S} \mid \mathcal{P}' \rangle \ \ = \ \ \langle \mathcal{X} \mid \mathcal{P}' \rangle \ast \langle \mathcal{S} \mid \rangle.
\end{eqnarray*}
Defining $V_{i}:= \langle \mathcal{Y}_{i} \mid \rangle$ ($i \in \mathcal{I}$) and $U:= \langle (\mathcal{X} \cup  \mathcal{S}) \backslash \mathcal{Y} \mid \rangle$, where $\mathcal{Y} = \bigsqcup_{i\in \mathcal{I}} \mathcal{Y}_{i}$, we see that $\langle \mathcal{X} \cup  \mathcal{S} \mid \mathcal{P}' \rangle$ is a staggered presentation in the sense of Definition~\ref{defstagli} for the special case of free groups $V_{i}$ and $U$. 
\end{remark}

Before we continue examining staggered presentations we recall a definition of aspherical presentations.

\begin{definition}[\textbf{cf. \cite[Chapter III, Proposition 10.1]{BuchLS}}] \label{defasph}
A presentation $G= \langle  \mathcal{X} \mid  \mathcal{R} \rangle$ is \emph{aspherical} if and only if there are no non-trivial identities in $\langle \mathcal{X} \mid \rangle$ among the relations $ \mathcal{R}$.
\end{definition}

In the proofs of our embedding theorems we will use the following results.

\begin{theorem}[\textbf{cf. \cite[Chapter III, Proposition 11.1]{BuchLS}}] \label{thmasph1}
If $G = \langle  \mathcal{X} \mid  \mathcal{R} \rangle$ where $ \mathcal{R}$ consists of a single relator, or more generally, if the presentation is staggered (in the sense of Definition~\ref{defstagli}, but with free groups $V_{i}$ and $U$), then the presentation is aspherical.
\end{theorem}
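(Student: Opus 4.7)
The plan is to prove asphericity by a Magnus-style induction on the lexicographically ordered pair $(|\mathcal{J}|, \sum_{j \in \mathcal{J}} |r_j|)$, where $|r_j|$ denotes the cyclic word length of $r_j$ in $U \ast W$. The base cases $|\mathcal{J}| = 0$ and $|\mathcal{J}|=1$ with $|r_1|=1$ are immediate: the first is the free product itself, and the second collapses to a free product on fewer generators. For the single-relator case $|\mathcal{J}|=1$ of larger length, I would run a secondary induction on $|r_1|$: pick a basis element $x$ of some $V_k$ with $k\in\{\alpha_{r_1},\omega_{r_1}\}$ and, after the device of Remark~\ref{remroot} (root extraction plus a leaf-isomorphism) if needed, arrange that $r_1$ has exponent sum zero in $x$. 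Passing to the kernel of the exponent-sum homomorphism $\varphi\colon U\ast W \to \mathbb{Z}$ sending $x\mapsto 1$ and annihilating every other basis element produces a new single-relator presentation in which $x^{\pm 1}$ has been eliminated from the relator (cf.\ Remark~\ref{remkernel}), hence the relator is strictly shorter. Apply the inductive hypothesis and lift asphericity along the regular $\mathbb{Z}$-cover associated with $\varphi$.

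For the inductive step with several relators, I would choose $\alpha := \min_j \alpha_{r_j}$; by staggering, $V_\alpha$ appears only in the unique relator $r_{j_0}$ realising this minimum. Pick a basis element $x$ of $V_\alpha$, and again consider the exponent-sum homomorphism $\varphi\colon U\ast W\to\mathbb{Z}$ sending $x\mapsto 1$ and killing everything else. The kernel admits the free-product description of Remark~\ref{remkernel}: each $V_i$ ($i\ne\alpha$) and $U$ contributes countably many conjugates $x^{-k}V_ix^k$ and $x^{-k}Ux^k$ ($k\in\mathbb{Z}$), while $V_\alpha\cap\ker(\varphi)$ is free on the $x$-conjugates of $V_\alpha\setminus\{x\}$. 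The lifted relator set $\{x^{-k}r_jx^k : j\in\mathcal{J},\, k\in\mathbb{Z}\}$, ordered lexicographically in $(j,k)$, inherits a staggered structure with respect to the countable family of new free factors indexed by $(i,k)\in\mathcal{I}\times\mathbb{Z}$ under the lexicographic order on the indices. Because every occurrence of $x^{\pm1}$ vanishes in the lifts, the total length drops strictly, so the inductive hypothesis applies.

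To close, I would recover asphericity of the original presentation from asphericity of the covering presentation via the standard lifting argument: any non-trivial identity among the relators $r_j$ in $\langle \mathcal{X} \mid \rangle$ lifts, under the cover corresponding to $\varphi$, to a non-trivial identity among the $x^{-k}r_jx^k$ in $\ker(\varphi)$, which contradicts the inductive asphericity there. Combined with the observation that the set of relators considered in the cover includes (up to conjugation and cyclic rotation) all lifts of every original $r_j$, this is enough.

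The main obstacle is verifying that the lifted relator system is genuinely staggered with respect to the chosen ordering of the enlarged countable family of free factors. This requires a careful bookkeeping argument: one must show that the pairs $(\alpha_{r_m},\omega_{r_m})$ of the new system, computed with respect to the $(i,k)$-indexed free factors of $\ker(\varphi)$, satisfy the strict inequalities of Definition~\ref{defstagli} whenever the lexicographic ordering on $(j,k)$ increases. This propagation rests on the original staggering of the $r_j$ together with the fact that the $k$-indexed conjugates of each original factor $V_i$ are interleaved rather than clumped, so that $\alpha_{r_{j_0}}<\alpha_{r_{j_1}}$ upstairs translates into the analogous inequality downstairs.
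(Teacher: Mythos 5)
The paper does not actually prove Theorem~\ref{thmasph1}; it is imported verbatim from \cite[Chapter III, Proposition 11.1]{BuchLS}, so there is no in-paper argument to compare yours against. Your sketch is a reconstruction of the Magnus--Lyndon--Schupp proof, and the architecture (arrange exponent sum zero, pass to $\ker(\varphi)$ for $\varphi\colon U\ast W\to\mathbb{Z}$, show the lifted system is staggered with shorter relators, transfer identities along the cover) is the right one. However, as written the induction does not close, for two concrete reasons.

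First, your measure $(|\mathcal{J}|,\sum_j|r_j|)$ with $|\mathcal{J}|$ weighted first does not decrease under the reduction. Passing to $\ker(\varphi)$ replaces the relator set by all lifts $x^{-k}r_jx^{k}$ ($k\in\mathbb{Z}$); even after restricting to the finitely many lifts occurring in a fixed putative identity $\prod_i u_ir_{j_i}^{e_i}u_i^{-1}=1$, the conjugators $u_i$ may have different $\varphi$-values, so several distinct lifts of the \emph{same} $r_j$ can be forced to appear. Hence $|\mathcal{J}|$ can strictly increase, and since only the lifts of the one relator $r_{j_0}$ containing $x$ get shorter (lifts of every other $r_j$ keep their length and multiply), the total length can increase as well. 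The measure that works is essentially the maximal relator length, together with a preliminary reduction to finite subsystems (needed anyway because $\mathcal{J}$ may be infinite and the lifted system always is). Relatedly, your one-relator step does not produce ``a new single-relator presentation'': the induced presentation of $\ker(\varphi)$ has the countably many relators $x^{-k}r_1x^{k}$, so the one-relator case feeds into the staggered multi-relator case rather than into itself, and your primary/secondary induction cannot be separated as stated. Second, the step you yourself flag as the main obstacle --- that the lifted system is staggered for a suitable ordering of the factors $x^{-k}V_ix^{k}$ --- is the actual content of the theorem and is left unproved; it moreover fails outright in degenerate configurations you do not exclude, e.g.\ when $V_\alpha=\langle x\rangle$ and $r_{j_0}\in U\ast\langle x\rangle$, where the lifts of $r_{j_0}$ use no new staggering factor at all (violating Definition~\ref{defstagli}), and the root-extraction device must then borrow its second generator from a different free factor, changing the partition of the basis. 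These cases have to be split off explicitly, as in \cite{BuchLS}.
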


\begin{remark} \label{rembasisinst}
Let $G= \langle \mathcal{X} \cup \mathcal{S} \mid \mathcal{P} \rangle$ be a staggered presentation in the sense of Defintion~\ref{defstaggered} with respect to subsets $\mathcal{Y}_{i}$ ($i \in \mathcal{I}$) of $\mathcal{X}$, where $\mathcal{P}= \{ p_{j} \mid j \in \mathcal{J}\}$ for an index set $\mathcal{J}$. Then the free group $\langle \mathcal{P} \mid \rangle$ embeds canonically into $\langle \mathcal{X} \cup \mathcal{S} \mid \rangle$, i.\,e. there are no non-trivial identities among the elements of $\mathcal{P}$ in $\langle \mathcal{X} \cup \mathcal{S} \mid \rangle$. This can easily be seen by the following observation: From Remark~\ref{remspecialfree} we notice that $G / \langle \! \langle \mathcal{S} \rangle \! \rangle = \langle \mathcal{X} \mid \mathcal{P}' \rangle$, where $\mathcal{P}'$ consists of the images of the elements $p_{j} \in \mathcal{P}$ under the canonical homomorphism from $G$ to $G / \langle \! \langle \mathcal{S} \rangle \! \rangle$ is a staggered presentation in the sense of Definition~\ref{defstagli}, but with free groups $V_{i}$ and $U$. Combining Definition~\ref{defasph} and Theorem~\ref{thmasph1} we see that there are no non-trivial identities among the elements of $\mathcal{P}'$ in $\langle \mathcal{X} \mid \rangle$. However, such an identity would follow from a non-trivial identity among the elements of $\mathcal{P}$ in $\langle \mathcal{X} \cup \mathcal{S} \mid \rangle$ by applying the homomorphism sending the elements of $\mathcal{X}$ to themselves and the elements of $\mathcal{S}$ to the trivial element.
\end{remark}

\begin{theorem}[\textbf{see \cite[Chapter III, Proposition 10.2]{BuchLS}}] \label{thmasph2}
If $G= \langle X \mid R \rangle$ is aspherical, and no element of $R$ is conjugate to another or to its inverse, then the following condition holds:
Let $p_{1} \cdots p_{n}=1$ where each $p_{i}=u_{i}r_{i}^{e_{i}}u_{i}^{-1}$ for some $u_{i} \in F$, $r_{i} \in R$, and $e_{i}=\pm 1$. Then the indices fall into pairs $(i,j)$ such that $r_{i}=r_{j}$, $e_{i}=-e_{j}$, and $u_{i} \in u_{j} N C_{i}$ where $C_{i}$ (the centralizer of $r_{i}$) is the cyclic group generated by the root $s_{i}$ of $r_{i}=s_{i}^{m_{i}}$. 
\end{theorem}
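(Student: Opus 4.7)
The plan is to derive this from the standard correspondence between asphericity and the reducibility of identity sequences by Peiffer operations. Recall that an identity sequence is a finite sequence $(p_1,\dots,p_n)$ of conjugates of relators $p_i=u_i r_i^{e_i}u_i^{-1}$ whose product is trivial in $F$, and that the two Peiffer operations are: exchange, replacing an adjacent pair $(p_i,p_{i+1})$ by $(p_{i+1},\,p_{i+1}^{-1}p_ip_{i+1})$ (or symmetrically $(p_ip_{i+1}p_i^{-1},p_i)$); and cancellation, deleting an adjacent pair of the form $(q,q^{-1})$. A classical result (see \cite[Chapter III]{BuchLS}) says that $\langle X\mid R\rangle$ is aspherical if and only if every identity sequence reduces by Peiffer operations to the empty sequence. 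I would take this equivalence as the starting point so that the hypothesis produces a terminating Peiffer reduction of $(p_1,\dots,p_n)$ to the empty sequence.

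Next, I would analyze what a single Peiffer cancellation forces. If at some stage an adjacent pair $(q,q^{-1})$ is to be deleted, writing $q=u r^{e}u^{-1}$ and $q^{-1}=u'(r')^{e'}(u')^{-1}$ yields $u r^{e}u^{-1}=u'(r')^{-e'}(u')^{-1}$. Since no element of $R$ is conjugate to another or to its inverse, this equation forces $r=r'$ and $e=-e'$. Consequently $u^{-1}u'$ centralises $r^{e}$, and hence lies in the centraliser $C$ of $r$, which in the free group $F$ is cyclic generated by the root $s$ of $r=s^{m}$. Thus each cancellation event produces a pair of indices of exactly the shape demanded by the conclusion.

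The remaining task is to track how the conjugating elements drift under the intermediate Peiffer exchanges and verify that the accumulated drift lies in the normal closure $N=\langle\!\langle R\rangle\!\rangle_F$. A Peiffer exchange replaces $u_{i+1}r_{i+1}^{e_{i+1}}u_{i+1}^{-1}$ by $p_{i+1}^{-1}p_ip_{i+1}=(u_{i+1}r_{i+1}^{-e_{i+1}}u_{i+1}^{-1})\cdot u_ir_i^{e_i}u_i^{-1}\cdot(u_{i+1}r_{i+1}^{e_{i+1}}u_{i+1}^{-1})$, so the new conjugating element of the moved piece differs from the old one by left multiplication by an element of $N$. Inductively, at each stage of the reduction every current conjugating element $\tilde u$ differs from the corresponding original $u_k$ by an element of $N$. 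Hence when indices $i$ and $j$ are finally paired by a cancellation, the original $u_i,u_j$ satisfy $u_i\in u_jNC_i$, as required.

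The main obstacle is the bookkeeping in the last step: keeping track, through an arbitrary sequence of Peiffer exchanges, of the coset $u_kNC_k$ attached to each surviving piece, and verifying that exchanges indeed preserve these cosets. Once this is set up carefully, the pairing claim follows at once from the termination of the Peiffer reduction provided by asphericity together with the rigidity forced by the non-conjugacy of distinct relators.
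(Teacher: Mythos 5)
This statement is quoted verbatim from Lyndon--Schupp (Chapter~III, Proposition~10.2) and the paper gives no proof of it, so there is nothing internal to compare against; your reconstruction is essentially the standard textbook argument and it is correct. The three ingredients are all in place: asphericity (in the sense of Definition~\ref{defasph}, i.e.\ no non-trivial identities among the relations) gives a Peiffer reduction of $(p_{1},\dots,p_{n})$ to the empty sequence; the non-conjugacy hypothesis forces each cancellation event to pair pieces with $r_{i}=r_{j}$, $e_{i}=-e_{j}$ and conjugators differing by an element of the cyclic centralizer $C_{i}=\langle s_{i}\rangle$; and since exchanges only multiply a conjugator on the left by a conjugate of a relator, i.e.\ by an element of $N$, and $N$ is normal, the double coset $u_{k}NC_{k}=Nu_{k}C_{k}$ (a genuine coset of the subgroup $NC_{k}$) is an invariant of each surviving piece, which resolves the bookkeeping you flag as the main obstacle. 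One cosmetic slip: in your description of the exchange $(p_{i},p_{i+1})\mapsto(p_{i+1},\,p_{i+1}^{-1}p_{i}p_{i+1})$ the piece whose conjugator changes is $p_{i}$ (its new conjugator is $p_{i+1}^{-1}u_{i}\in Nu_{i}$), not $p_{i+1}$; your stated conclusion about left multiplication by an element of $N$ is nevertheless the right one, and the argument goes through.
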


Next, we prove the following corollary of Theorem~\ref{HowFS} about staggered presentations of locally indicable groups.

\begin{corollary} \label{corstaggered}
Let $U$ be a locally indicable group and $(U \ast W) / \langle \! \langle r_{j} \mid \mathcal{J} \rangle \! \rangle$ be a staggered presentation for $W= \bigast_{i \in \mathcal{I}} V_{i}$ with locally indicable groups $V_{i}$. Further, let $w$ be a non-trivial element of $U \ast W$. Let $\alpha,\omega \in \mathcal{I}$ be indices such that $w$ is contained in the normal closure of the elements $r_{j}$ ($j \in \mathcal{J}$) in $U \ast W$ and uses only free factors $V_{k}$ with $\alpha \leqslant k \leqslant \omega$. Then $w$ is already contained in the normal closure of the elements $r_{\ell}$ in $U \ast W$ with $\alpha \leqslant \alpha_{r_{\ell}}$ and $\omega_{r_{\ell}} \leqslant \omega$. 
\end{corollary}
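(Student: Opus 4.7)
Let $J_A := \{j \in \mathcal{J} : \alpha_{r_j} < \alpha\}$, $J_C := \{j \in \mathcal{J} : \omega_{r_j} > \omega\}$, $J_B := \mathcal{J} \setminus (J_A \cup J_C)$, and write $N_B := \langle\!\langle r_\ell \mid \ell \in J_B \rangle\!\rangle$. The plan is to induct on the number $|J_A \cup J_C|$ of \emph{bad} relations. Since $w$ is a finite product of conjugates of $r_j^{\pm 1}$, only finitely many indices appear; passing to a finite sub-presentation (still staggered, with its Type B set contained in $J_B$) reduces to the case that $\mathcal{J}$ is finite. The base case $|J_A \cup J_C| = 0$ is immediate, since then $\mathcal{J} = J_B$.

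For the inductive step, by symmetry I assume $J_C \neq \emptyset$ and set $M := \max \mathcal{J}$; the staggered order places $M$ in $J_C$ and forces $\omega_{r_j} < \omega_{r_M}$ for every $j < M$. Consequently none of the relations with $j \in \mathcal{J}\setminus\{M\}$ touches the free factor $V_{\omega_{r_M}}$, and the quotient by those relations decomposes as a free product
\[
G' \;:=\; (U \ast W)\big/\langle\!\langle r_j \mid j \in \mathcal{J}\setminus\{M\} \rangle\!\rangle \;=\; A \ast V_{\omega_{r_M}},
\]
where $A := \big(U \ast \bigast_{k \neq \omega_{r_M}} V_k\big)\big/\langle\!\langle r_j \mid j \in \mathcal{J}\setminus\{M\}\rangle\!\rangle$. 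Iterating Theorem~\ref{Howlocind} across this smaller staggered quotient makes $A$ locally indicable. The image $\bar{r}_M$ of $r_M$ in $G'$ retains its $V_{\omega_{r_M}}$-syllables (the eliminated relations do not touch that factor), so a cyclic reduction of $\bar{r}_M$ in $A \ast V_{\omega_{r_M}}$ has length at least $2$ unless $r_M$ lies entirely inside $V_{\omega_{r_M}}$. In the generic case Howie's Freiheitssatz (Theorem~\ref{HowFS}) then produces an embedding
\[
A \;\hookrightarrow\; G'\big/\langle\!\langle \bar{r}_M\rangle\!\rangle \;=\; (U \ast W)\big/\langle\!\langle r_j \mid j \in \mathcal{J}\rangle\!\rangle,
\]
while in the degenerate case $G'/\langle\!\langle \bar{r}_M\rangle\!\rangle$ itself splits as $A \ast (V_{\omega_{r_M}}/\langle\!\langle r_M\rangle\!\rangle)$, and $A$ embeds as a free factor.

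Now $w$ uses only free factors $V_k$ with $k \leq \omega < \omega_{r_M}$, so its image in $G'$ lies in the factor $A$; by hypothesis this image becomes trivial in $G'/\langle\!\langle \bar{r}_M\rangle\!\rangle$, and injectivity of the embedding just produced forces $w \in \langle\!\langle r_j \mid j \in \mathcal{J}\setminus\{M\}\rangle\!\rangle$. Applying the induction hypothesis to the sub-presentation $\mathcal{J}\setminus\{M\}$ (same $J_B$, one fewer bad relation) then yields $w \in N_B$, which closes the induction. The main obstacle is sustaining local indicability of $A$ across the iteration of Theorem~\ref{Howlocind}, since that theorem requires each successively added relation not to be a proper power in the current intermediate quotient; verifying this under the staggered hypothesis (or otherwise appealing to a strengthened asphericity-type argument in the spirit of Howie to cover the proper-power case) is the most delicate point of the proof.
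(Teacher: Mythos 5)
Your overall strategy (peel off the extremal relator, use a Freiheitssatz to conclude that killing it cannot trivialize an element supported away from $V_{\omega_{r_M}}$, then induct) is close in spirit to the paper's argument, but the particular decomposition you chose creates a gap that you flag yourself and do not close, and it is a genuine one. You apply Theorem~\ref{HowFS} to the free product $A \ast V_{\omega_{r_M}}$, where $A$ is the quotient of $U \ast \bigast_{k \neq \omega_{r_M}} V_{k}$ by \emph{all} the remaining relators, and this requires $A$ to be locally indicable. Theorem~\ref{Howlocind} delivers that only when each successively imposed relator fails to be a proper power, but the hypotheses of Corollary~\ref{corstaggered} do not exclude proper powers: Definition~\ref{defstagli} asks only that the $r_{j}$ be cyclically reduced and use some $V_{k}$. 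This is not a removable technicality. The paper later invokes Corollary~\ref{corstaggered} (in the proof of Lemma~\ref{lemconsiderT}) for a staggered system whose relators include the proper powers $q_{j}^{\ell}$, where the relevant intermediate quotient genuinely has torsion by Theorem~\ref{thmMaKaSo}; and a Freiheitssatz for one-relator products with a non-locally-indicable, torsion-free factor is precisely the open problem recalled in the introduction, so the ``strengthened asphericity-type argument'' you gesture at cannot simply be assumed.

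The paper circumvents this by never feeding a quotient group into Theorem~\ref{HowFS}. It establishes by induction the amalgamated product decomposition \eqref{eqiso1} of $(U \ast W)/\langle \! \langle r_{k} \mid m \leqslant k \leqslant n \rangle \! \rangle$, in which the amalgamated subgroup is $U \ast V_{\alpha_{r_{n}},\omega_{r_{n-1}}}$ --- a free product of the \emph{original} locally indicable groups --- so that every application of Theorem~\ref{HowFS} is to a one-relator product of the given $U$ and $V_{i}$, for which no proper-power restriction is needed. The conclusion is then read off from the amalgamated product structure: an element supported in the factors $V_{k}$ with $k \leqslant \omega < \omega_{r_{n}}$ lies in the left factor, hence if it is trivial in the whole group it is already trivial there, contradicting the minimality of $n-m$. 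To repair your argument you would need to replace your free-product splitting of $G'$ by this amalgamated-product splitting of the original free product (or otherwise prove local indicability of $A$ under an added non-proper-power hypothesis, which would not suffice for the paper's applications).
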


\begin{proof}
Let $V_{\leqslant \mu} := \bigast_{k \leqslant \mu} V_{k}$, $V_{\geqslant \mu} := \bigast_{k \geqslant \mu} V_{k}$ and $V_{\mu,\nu} := \bigast_{\mu \leqslant k \leqslant \nu} V_{k}$ for some indices $\mu,\nu \in \mathcal{I}$. We fix an arbitrary element $m \in \mathcal{J}$. First, we prove for an $n \in \mathcal{J}$ with $m \leqslant n$ the isomorphy
\begin{eqnarray} \label{eqiso1}
&& (U \ast W) / \langle \! \langle r_{k} \mid m \leqslant k \leqslant n \rangle \! \rangle \nonumber\\
&\cong & \big( (U \ast V_{\leqslant \omega_{r_{n-1}}}) / \langle \! \langle r_{k} \mid m \leqslant k \leqslant n-1 \rangle \! \rangle \big) \underset{U \ast V_{\alpha_{r_{n}},\omega_{r_{n-1}}}}{\ast} \big( ( U \ast V_{\geqslant \alpha_{r_{n}}}) / \langle \! \langle r_{n} \rangle \! \rangle \big)
\end{eqnarray}
by induction on $n-m$. For the induction base ($m=n$) the isomorphy follows directly from Theorem~\ref{HowFS}. Note for the induction step ($n \rightarrow n+1$) that $U \ast V_{\alpha_{r_{n+1}},\omega_{r_{n}}}$ embeds due to Theorem~\ref{HowFS} canonically into the factor $(U \ast V_{\geqslant \alpha_{r_{n}}}) / \langle \! \langle r_{n} \rangle \! \rangle$ which embeds due to the induction hypothesis into $(U \ast W) / \langle \! \langle r_{k} \mid m \leqslant k \leqslant n \rangle \! \rangle$. Since $U \ast V_{\alpha_{r_{n+1}}, \omega_{r_{n}}}$ also embeds due to Theorem~\ref{HowFS} canonically into $(U \ast V_{\geqslant \alpha_{r_{n+1}}}) / \langle \! \langle r_{n+1} \rangle \! \rangle$ we derive the desired isomorphy  
\begin{eqnarray*}
&& (U \ast W) / \langle \! \langle r_{k} \mid m \leqslant k \leqslant n+1 \rangle \! \rangle \\
&\cong & \big( ( U \ast V_{\leqslant \omega_{r_{n}}} ) / \langle \! \langle r_{k} \mid m \leqslant k \leqslant n \rangle \! \rangle \big) \underset{U \ast V_{\alpha_{r_{n+1}},\omega_{r_{n}}}}{\ast} \big( ( U \ast V_{\geqslant \alpha_{r_{n+1}}} ) / \langle \! \langle r_{n+1} \rangle \! \rangle \big).
\end{eqnarray*}
This ends the proof of the isomorphy \eqref{eqiso1}.

For the purpose of a contradiction, let $w$ be an element of $\langle \! \langle r_{j} \mid j \in \mathcal{J} \rangle \! \rangle_{U \ast W}$ which is also an element of the free product $H \ast V_{\alpha,\omega}$ for some indices $\alpha, \omega$, but is not contained in the normal closure $\langle \! \langle r_{\ell} \mid \alpha \leqslant \alpha_{r_{\ell}}, \omega_{r_{\ell}} \leqslant \omega \rangle \! \rangle_{U \ast W}$. Let us choose two indices $m,n \in \mathcal{J}$ such that $w$ is an element of the normal closure $\langle \! \langle r_{\ell} \mid m \leqslant \ell \leqslant n \rangle \! \rangle_{U \ast W}$ and $n-m$ is minimal with this property. We only consider the case $\omega < \omega_{r_{n}}$ since the case $\alpha_{r_{m}} < \alpha$ follows analogously. Consider the amalgamated product from isomorphy \eqref{eqiso1}. Because of the inequality $\omega < \omega_{r_{n}}$, $w$ is an element of the left factor of this amalgamated product. Since $w$ is trivial in $(U \ast W) / \langle \! \langle r_{k} \mid m \leqslant k \leqslant n \rangle \! \rangle$ it must already be trivial in $( U \ast V_{\leqslant \omega_{r_{n-1}}} ) / \langle \! \langle r_{k} \mid m \leqslant k \leqslant n-1 \rangle \! \rangle$. This contradicts the minimality of $n-m$.
\end{proof}

\section{Embedding theorems}

We begin this section by proving Lemma~\ref{lemembl}.\medskip

\noindent\textbf{Proof of Lemma~\ref{lemembl}.}
W.\,l.\,o.\,g. let $r$ be cyclically reduced. Our proof is by induction over the length $|g|$ of the cyclical reduction $g$ of $p$. For the base of induction let $|g|=1$. Then $p$ is a primitive element of $A$. We extend $p$ to a basis $\mathcal{A}'$ of $A$. Since $r$ is not an element of $\langle p \mid \rangle  \ast H$, $r$ contains a basis element from $\mathcal{A}' \backslash \{p\}$ which is a free basis of $A/ \langle \! \langle p \rangle \! \rangle$. We have
\begin{eqnarray*}
(A \ast H) / \langle \! \langle r \rangle \! \rangle \ \ = \ \ \big((A / \langle \! \langle p \rangle \! \rangle) \ \ast \ \langle p \mid \rangle \ \ast \ H \big) / \langle \! \langle r \rangle \! \rangle.
\end{eqnarray*}
So, by Theorem~\ref{HowFS}, $\langle p \mid \rangle \ast H$ embeds canonically into $(A \ast H) / \langle \! \langle r \rangle \! \rangle$.
For the induction case we choose two basis elements $a,b \in \mathcal{A}$ which are contained in $g$. We want to construct a basis-element $\widetilde{a}$ with $p_{\widetilde{a}}=0$. If $p_{a}=0$, we set $\widetilde{a}:=a,\widetilde{b}:=b$, and if $p_{a} \neq 0$, but $p_{b}=0$, we set $\widetilde{a}:=b,\widetilde{b}:=a$. For $p_{a} \neq 0 \neq p_{b}$ we define $\widetilde{a}$ by $a=\widetilde{a}^{p_{b}}$ and set $\widetilde{b}:=b\widetilde{a}^{p_{a}}$. Note that for $p$ written in the new basis $\mathcal{A}':=(\mathcal{A} \backslash \{a,b\}) \cup \{\widetilde{a},\widetilde{b}\}$ we have $p_{\widetilde{a}} =0$ in each case.

\noindent\underline{Case 1.} $r_{\widetilde{a}} = 0$\\
We consider the homomorphism $\varphi \colon A \ast H \rightarrow \mathbb{Z}$ sending $\widetilde{a}$ to 1 and every other basis element from $\mathcal{A}$ along with every element of $H$ to $0$. It is easy to see that the normal closure of $r \in A \ast H$ corresponds to the normal closure of the elements $r_{i}$ ($i \in \mathbb{Z}$) in $\ker(\varphi)$, where $r_{i}=\widetilde{a}^{-i} r \widetilde{a}^{i}$. We have
\begin{eqnarray*}
\ker(\varphi) \ \ = \ \ \widetilde{A} \ast \bigast_{i \in \mathbb{Z}} H_{i}, \ \text{ where } \ H_{i}:=\widetilde{a}^{-i} H \widetilde{a}^{i}
\end{eqnarray*}
and $\widetilde{A}$ is the free group with basis $\widetilde{\mathcal{A}} := \{x_{i} \mid x \in \mathcal{A}' \backslash \{\widetilde{a}\}, i \in \mathbb{Z}\}$ for $x_{i}=\widetilde{a}^{-i} x \widetilde{a}^{i}$. We define $p_{i}:=\widetilde{a}^{-i}p\widetilde{a}^{i}$ and $g_{i}:=\widetilde{a}^{-i}g\widetilde{a}^{i}$ ($i \in \mathbb{Z}$). To show the desired embedding it is sufficient to prove the embedding of $\langle p_{0} \mid \rangle \ast H_{0}$ into $\ker(\varphi) / \langle \! \langle r_{i} \mid i \in \mathbb{Z} \rangle \! \rangle$. By assumption, we have $r \in (A \ast H) \backslash H$. Thus, each $r_{i} \in \widetilde{A} \ast \bigast_{i \in \mathbb{Z}} H_{i}$ contains at least one piece of at least one free factor $H_{i}$. It follows that $\ker(\varphi) / \langle \! \langle r_{i} \mid i \in \mathbb{Z} \rangle \! \rangle$  is a staggered presentation over locally indicable groups (with $V_{i}:=H_{i}$, cf. Definition~\ref{defstagli}). Let $j \in \mathbb{Z}$ such that $r_{j}$ contains an element from $H_{0}$. By Corollary~\ref{corstaggered}, it remains to show the embedding of  $\langle p_{0} \mid \rangle \ast H_{0}$ into $\ker(\varphi) / \langle \! \langle r_{j} \rangle \! \rangle$. Because a free product of locally indicable groups is locally indicable, this embedding follows by the induction hypothesis which can be seen in the following way: Comparing $g$ written in the basis $\mathcal{A}$ and $g_{0}$ in the basis $\widetilde{\mathcal{A}}$ we see that every letter $a$ vanished without replacement and every other letter $x$ was replaced with a letter $x_{i}$. Since $g$ contained at least one letter $a$ by assumption, we have $|g|>|g_{0}|$. This justifies the application of the induction hypothesis.

\noindent\underline{Case 2.} $r_{\widetilde{a}}  \neq 0 \neq p_{\widetilde{b}}$\\
In this case the matrix
\begin{eqnarray*}
\begin{pmatrix}                                
p_{\widetilde{a}} & p_{\widetilde{b}} \\                                               
r_{\widetilde{a}} & r_{\widetilde{b}}                                               
\end{pmatrix}
\end{eqnarray*}
has full rank. Let $G:=A \ast_{p=c} (\langle c \mid \rangle \ast H)$. Because of Theorem~\ref{thmHowequ} and Remark~\ref{remmatrix} it follows that $\langle c \mid \rangle \ast H$ and therefore $\langle p \mid \rangle \ast H$ embeds canonically into $G / \langle \! \langle r \rangle \! \rangle = (\big (A \ast H) / \langle \! \langle r \rangle \! \rangle\big) \ast_{p=c} \langle c \mid \rangle$. Finally, we deduce that $\langle p \mid \rangle \ast H$ embeds canonically into $ (A \ast H) / \langle \! \langle r \rangle \! \rangle$.

\noindent\underline{Case 3.} $r_{\widetilde{a}}  \neq 0 = p_{\widetilde{b}}$\\
Note that because of $p_{\widetilde{b}}=0$ we were in the case $p_{a}=p_{b}=0$ when choosing $\widetilde{a}$ and defined $\widetilde{a}:=a,\widetilde{b}:=b$. Thus, if $r_{\widetilde{b}}=0$, we can apply Case 2 with reversed roles of $\widetilde{a}$ and $\widetilde{b}$. If $r_{\widetilde{b}} \neq 0$ we define $\bar{a}$ by $a=\bar{a}^{r_{b}}$ and set $\bar{b}:=b\bar{a}^{r_{a}}$. Note that for $r$, $g$ written using the basis elements of $\bar{\mathcal{A}}:=(\mathcal{A} \backslash \{a,b\}) \cup \{\bar{a},\bar{b}\}$ we have $r_{\bar{a}}=p_{\bar{a}} =0$. Thus, we can also apply Case 2; this time with $\bar{a}$ and $\bar{b}$ taking over the roles of $\widetilde{a}$ and $\widetilde{b}$. \qed\\

Next, we prove an easy lemma which enables us to pass from tree-products to certain root-products in the proofs of our embedding theorems.

\begin{lemma} \label{lemconsiderroot}
Let $H$ be a locally indicable group, let $G$ be a tree-product and let $\widetilde{G}$ be a root-product of $G$ which is obtained though the relation $a = \widetilde{a}^{k}$, where $k \in \mathbb{Z} \backslash \{0\}$ and $a$ is a generator of a leaf-group $A$. Further, let $r$ be an element of $G \ast H$ and let $U$ be a subgroup of $G$ such that $U \ast H$ embeds canonically into $( \widetilde{G} \ast H) / \langle \! \langle r \rangle \! \rangle$. Then $U \ast H$ embeds canonically into $(G \ast H) / \langle \! \langle r \rangle \! \rangle$. 
\end{lemma}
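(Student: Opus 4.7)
My plan is to realise the canonical map $\iota\colon U \ast H \to (G \ast H)/\langle\!\langle r \rangle\!\rangle$ as the first factor of a composition whose overall value is injective, so that $\iota$ itself must be injective. Concretely, I would factorise the canonical embedding $U \ast H \hookrightarrow (\widetilde{G} \ast H)/\langle\!\langle r \rangle\!\rangle$ promised by the hypothesis as
\[
U \ast H \;\xrightarrow{\;\iota\;}\; (G \ast H)/\langle\!\langle r \rangle\!\rangle \;\xrightarrow{\;\pi\;}\; (\widetilde{G} \ast H)/\langle\!\langle r \rangle\!\rangle,
\]
where $\pi$ is the homomorphism induced by a (still-to-be-exhibited) inclusion $G \ast H \hookrightarrow \widetilde{G} \ast H$. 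Since $\pi \circ \iota$ is then the assumed canonical embedding, it is injective, which forces $\iota$ to be injective as well.

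The only substantive ingredient is the inclusion $G \hookrightarrow \widetilde{G}$. By the definition of a root-product with the single new relation $a = \widetilde{a}^{k}$, the group $\widetilde{G}$ admits the presentation
\[
\widetilde{G} \;=\; G \underset{\langle a \rangle \,=\, \langle \widetilde{a}^{k} \rangle}{\ast} \langle \widetilde{a} \mid \, \rangle,
\]
i.e.\ it is an amalgamated free product. By Lemma~\ref{lemTPlocind} the tree-product $G$ is locally indicable, hence torsion-free, so the leaf-group basis element $a$ has infinite order in $G$; since $k \neq 0$ as well, both amalgamating subgroups are infinite cyclic and the identification $\langle a \rangle \cong \langle \widetilde{a}^{k} \rangle$ is a genuine isomorphism. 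The normal form theorem for amalgamated free products then yields the desired embedding $G \hookrightarrow \widetilde{G}$, and therefore $G \ast H \hookrightarrow \widetilde{G} \ast H$.

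Because $r \in G \ast H$ is sent to the same element $r \in \widetilde{G} \ast H$ under this inclusion, the normal closure $\langle\!\langle r \rangle\!\rangle_{G \ast H}$ lands inside $\langle\!\langle r \rangle\!\rangle_{\widetilde{G} \ast H}$, and we obtain a well-defined homomorphism $\pi$ making the displayed factorisation commute with the canonical maps from $U \ast H$. Injectivity of $\pi \circ \iota$ forces injectivity of $\iota$, which is the claim. The whole argument is essentially formal once the amalgamated-product description of $\widetilde{G}$ is in hand; that---together with invoking torsion-freeness of tree-products---is the only, and quite mild, technical point.
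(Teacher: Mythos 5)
Your proof is correct, and it is in fact a leaner version of the paper's argument. Both proofs hinge on the same commutative triangle: the canonical map $U \ast H \to (\widetilde{G} \ast H)/\langle\!\langle r \rangle\!\rangle$ factors through $(G \ast H)/\langle\!\langle r \rangle\!\rangle$, so injectivity of the composite forces injectivity of $\iota$. The difference lies in how the factorisation is realised. You treat it purely formally: the presentation of $\widetilde{G}$ contains all relations of $G$, so the induced homomorphism $\pi$ exists and the triangle commutes, and that is all the argument actually needs (your appeal to torsion-freeness of $G$ and the normal form theorem to get $G \hookrightarrow \widetilde{G}$ is harmless but not required --- $\pi\circ\iota$ being injective says nothing about $\pi$ or $G\hookrightarrow\widetilde G$ needing to be injective). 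The paper instead makes the factorisation explicit by determining the order $m \in \mathbb{N}\cup\{\infty\}$ of $a$ in $(G \ast H)/\langle\!\langle r \rangle\!\rangle$ and exhibiting the decomposition
\begin{eqnarray*}
(\widetilde{G} \ast H)/\langle\!\langle r \rangle\!\rangle \ \cong \ \big((G \ast H)/\langle\!\langle r \rangle\!\rangle\big) \underset{a=\widetilde{a}^{k}}{\ast} \langle \widetilde{a} \mid \widetilde{a}^{km}=1\rangle,
\end{eqnarray*}
before drawing the same conclusion from the left factor. The paper's route buys the stronger (unused) fact that $(G\ast H)/\langle\!\langle r\rangle\!\rangle$ itself embeds into $(\widetilde{G}\ast H)/\langle\!\langle r\rangle\!\rangle$; your route avoids having to verify that this quotient decomposition is a genuine amalgam. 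Both are valid.
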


\begin{proof}
If $\langle a \rangle_{A}$ embeds into $( G \ast H) / \langle \! \langle r \rangle \! \rangle$, we set $m= \infty$. Elsewise let $m \in \mathbb{N}$ be the smallest power such that $a^{m}$ is trivial in $(G \ast H)/ \langle \! \langle r \rangle \! \rangle$. Then the desired statement follows directly from
\begin{eqnarray*}
( \widetilde{G} \ast H) / \langle \! \langle r \rangle \! \rangle \ \cong \ ( G \ast H ) / \langle \! \langle r \rangle \! \rangle \underset{a = \widetilde{a}^{k}}{\ast} \langle \widetilde{a} \mid \widetilde{a}^{km}=1 \rangle,
\end{eqnarray*}
since an element of $U \ast H$ which is trivial in the left factor of the amalgamated product must also be trivial in the amalgamated product itself.
\end{proof}

The following lemmata are further tools that will be used repeatedly in the proofs of our embedding theorems. For the proof of the first lemma we use a theorem of A. Karrass, W. Magnus and D. Solitar:

\begin{theorem}[\textbf{see \cite[Theorem 3]{ArtMaKaSo}}] \label{thmMaKaSo}
Let $G$ be a group with generators $a,b,c,\dots$ and a single defining relation $V^{k}(a,b,c,\dots)$, $k>1$, where $V(a,b,c,\dots)$ is not itself a true power. Then $V$ has order $k$ and the elements of finite order in $G$ are just the powers of $V$ and their conjugates. 
\end{theorem}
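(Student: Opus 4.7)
My plan is to prove the theorem by induction on the length of $V$, following the classical Magnus rewriting strategy that was used for the Freiheitssatz. The induction hypothesis is the full statement of the theorem: in any one-relator group whose relator is a proper $k$-th power of a cyclically reduced word of strictly shorter length (and not itself a proper power), that word has order $k$ and all torsion elements are conjugate to powers of it.

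The base case is when $V$ involves only a single generator $a$. Since $V$ is not a proper power and is cyclically reduced, $V = a^{\pm 1}$, so $G$ decomposes as a free product $\langle a \mid a^{k}\rangle \ast F$, where $F$ is free on the remaining generators. In any free product, the torsion elements are precisely conjugates of torsion in the factors, and $a$ has order exactly $k$ in $\mathbb{Z}/k\mathbb{Z}$; this yields the base case directly.

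For the induction step I would split into two cases according to whether $V$ has a generator with zero exponent sum. If some generator $a$ occurs with zero exponent sum in $V$, I would consider the homomorphism $G \to \mathbb{Z}$ sending $a \mapsto 1$ and the other generators to $0$, and analyze its kernel $N$ using the Reidemeister--Schreier / Magnus rewriting process. Exactly as in Remark~\ref{remkernel}, $N$ is generated by the conjugates $x_{i} = a^{-i} x a^{i}$ of the other generators, and the defining relator $V^{k}$ lifts to a family of relators $V_{i}^{k}$ where each $V_{i}$ is obtained by indexing, strictly shorter than $V$, and still not a proper power. A staggering-type argument presents $N$ as an iterated amalgamated product of the one-relator groups $N_{i} := \langle x_{j}\mid V_{i}^{k}\rangle$ (one-relator groups with the shorter relator $V_{i}^{k}$) amalgamated along their Magnus subgroups; the Freiheitssatz gives the injectivity of these amalgamations. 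By the induction hypothesis each $V_{i}$ has order exactly $k$ in $N_{i}$ and all torsion in $N_{i}$ is conjugate to a power of $V_{i}$. A standard fact about amalgamated products (torsion comes from torsion in the factors) then transports this conclusion to $N$ and hence to $G$, yielding that $V = V_{0}$ has order $k$ and that every torsion element of $G$ is conjugate to a power of $V$. If no generator has zero exponent sum, I would perform the usual Magnus substitution trick: picking two generators $a,b$ occurring with exponent sums $\alpha,\beta \neq 0$ in $V$, introduce a root and set $a = \tilde{a}^{\beta}$, $\tilde{b} = b\tilde{a}^{-\alpha}$, making the exponent sum of $\tilde{a}$ in $V$ equal to zero, and reduce to the previous case (using that passing to the larger group via the root relation $a = \tilde{a}^{\beta}$ does not destroy the torsion statement, analogously to Lemma~\ref{lemconsiderroot}).

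The main obstacle I anticipate is handling the amalgamated structure of the kernel $N$ correctly and proving that the order of $V$ does not collapse to a proper divisor of $k$. The staggering of the rewritten relators $V_{i}^{k}$ is essential here, as is the classical fact that in an amalgamated free product the only elements of finite order are those conjugate into a factor; combined with the inductive control on torsion in each $N_{i}$, this delivers the precise statement. The second potentially delicate point is the root-substitution case, where one must verify that the order of $V$ in the auxiliary group with the extra root is the same as in $G$, which follows from an amalgamation argument essentially identical to Lemma~\ref{lemconsiderroot}.
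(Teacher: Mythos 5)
The paper does not prove this statement: Theorem~\ref{thmMaKaSo} is imported verbatim from Karrass, Magnus and Solitar \cite{ArtMaKaSo} and used as a black box (in the proof of Lemma~\ref{lemconsiderT}), so there is no in-paper argument to compare against. Your outline reproduces, essentially correctly, the original proof from that source: Magnus's induction on the length of $V$, the case split on whether some generator occurring in $V$ has zero exponent sum, the presentation of the kernel of $G\to\mathbb{Z}$ as a staggered chain of amalgams of one-relator groups with shorter relators $V_i^{k}$ (with the Freiheitssatz providing the embeddings and the fact that torsion in an amalgamated product is conjugate into a factor transporting the conclusion), and the substitution trick in the remaining case. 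Two details to tighten if you wrote this out in full: first, with $a=\tilde a^{\beta}$ your choice $\tilde b=b\tilde a^{-\alpha}$ gives $\sigma_{\tilde a}(V)=2\alpha\beta\neq 0$; you want $b=\tilde b\,\tilde a^{-\alpha}$, i.e. $\tilde b=b\tilde a^{\alpha}$ (compare Remark~\ref{remroot}). Second, in the substitution case you must pull back not only the order of $V$ but also the conjugacy classification of torsion from $G'=G\ast_{a=\tilde a^{\beta}}\langle\tilde a\rangle$ to $G$; this needs the conjugacy theorem for amalgamated products together with the observation that a non-trivial torsion element cannot be conjugated into the torsion-free edge group $\langle a\rangle$, so that a conjugation in $G'$ between torsion elements of the factor $G$ can be replaced by one inside $G$.
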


\begin{lemma} \label{lemconsiderT}
Let $G$ be a tree-product, $S$ be a subtree-product of $G$ and $H$ be a locally indicable group. Further, let $r$ be an element of $G \ast H$ with contracted conjugate $\widetilde{r}$ and minimal tree-product $T$. Finally, let $T$ contain at least one vertex-group of $S$.
If $\big(S \cap T \big) \ast H$ embeds canonically into $(T \ast H) / \langle \! \langle \widetilde{r} \rangle \! \rangle$, then $S \ast H$ embeds canonically into $(G \ast H) / \langle \! \langle r \rangle \! \rangle$.
\end{lemma}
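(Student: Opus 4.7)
The plan is to realise both $G \ast H$ and $S \ast H$ as iterated amalgamated products along the edges of $\mathcal{T}$ that leave $T$, and then to transport these amalgamations across the quotient by $\langle \! \langle r \rangle \! \rangle$. Let $B_{1}, \ldots, B_{k}$ be the branch-products of $G$ obtained by deleting $T$ together with its adjacent edges, and let $p_{j} = q_{j}$ (with $p_{j} \in T$, $q_{j} \in B_{j}$) denote the corresponding edge-relations. Then
$$G \ast H \ = \ (T \ast H) \underset{\langle p_{1}\rangle = \langle q_{1}\rangle}{\ast} B_{1} \cdots \underset{\langle p_{k}\rangle = \langle q_{k}\rangle}{\ast} B_{k}$$
is an iterated amalgamated product over infinite cyclic subgroups, each amalgamation being honest because the edge-words are non-powers in the locally indicable tree-products $T$, $B_{j}$ (Lemma~\ref{lemTPlocind}).

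Since $\widetilde{r} \in T \ast H$, passing to the quotient by $\langle \! \langle \widetilde{r} \rangle \! \rangle_{G \ast H}$ commutes with the amalgamation in the sense of pushouts, yielding
$$(G \ast H) / \langle \! \langle r \rangle \! \rangle \ \cong \ \bigl((T \ast H) / \langle \! \langle \widetilde{r} \rangle \! \rangle_{T \ast H}\bigr) \underset{\langle p_{1}\rangle = \langle q_{1}\rangle}{\ast} B_{1} \cdots \underset{\langle p_{k}\rangle = \langle q_{k}\rangle}{\ast} B_{k}.$$
Dually, using that $S$ is a subtree with $S \cap T \neq \emptyset$, I set $J_{1} := \{j : S \text{ extends into } B_{j}\}$ and $S_{j} := S \cap B_{j}$ for $j \in J_{1}$, giving
$$S \ast H \ = \ \bigl((S \cap T) \ast H\bigr) \underset{\langle p_{j}\rangle = \langle q_{j}\rangle}{\ast} S_{j} \quad (j \in J_{1}).$$

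To conclude I combine three ingredients: the hypothesised embedding $(S \cap T) \ast H \hookrightarrow (T \ast H) / \langle \! \langle \widetilde{r} \rangle \! \rangle_{T \ast H}$; the embedding of each $S_{j}$ into $B_{j}$ (subtree-products embed in their ambient tree-products by an easy induction on size using Theorem~\ref{HowFS}); and the standard amalgamation fact that, in $A \ast_{C} B$, subgroups $A' \leq A$ and $B' \leq B$ that both contain $C$ generate a copy of $A' \ast_{C} B'$ inside $A \ast_{C} B$. For $j \in J_{1}$, the amalgamating cyclic group $\langle p_{j}\rangle$ lies inside $(S \cap T) \ast H$ and hence remains infinite cyclic after passing to $(T \ast H) / \langle \! \langle \widetilde{r} \rangle \! \rangle_{T \ast H}$, so each relevant amalgamation on the $S$-side is honest. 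Iterating along the tree — noting that the $j \notin J_{1}$ amalgamations only attach further branches which do not interact with $S \ast H$ — delivers $S \ast H \hookrightarrow (G \ast H) / \langle \! \langle r \rangle \! \rangle$, as required.

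The main obstacle I expect is the identification of the quotient with an iterated amalgamated pushout: categorically this is automatic, but I must make sure that for $j \in J_{1}$ the pushout is a genuine amalgamated product (so that the subgroup lemma applies cleanly), and the embedding hypothesis on $(S \cap T) \ast H$ is precisely what guarantees this.
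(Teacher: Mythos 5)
Your overall strategy --- peel off the branches hanging outside $T$, quotient the $T \ast H$ factor by $\widetilde{r}$, and reassemble --- is the same as the paper's, and your treatment of the branches that $S$ enters is sound: for $j \in J_{1}$ the edge-word $p_{j}$ does lie in $(S \cap T) \ast H$, so the hypothesis guarantees it keeps infinite order in $(T \ast H)/\langle \! \langle \widetilde{r} \rangle \! \rangle$ and that amalgamation is honest. The gap is in the branches with $j \notin J_{1}$, which you dismiss as ``not interacting with $S \ast H$''. Your identification of $(G \ast H)/\langle \! \langle r \rangle \! \rangle$ with an iterated amalgamated product of $(T \ast H)/\langle \! \langle \widetilde{r} \rangle \! \rangle$ and the $B_{j}$ is correct only as a pushout; it is an \emph{amalgamated product of the stated factors} only if each $p_{j}$ still generates an infinite cyclic subgroup of $(T \ast H)/\langle \! \langle \widetilde{r} \rangle \! \rangle$. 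Nothing guarantees this for $j \notin J_{1}$: the relator $\widetilde{r}$ may impose $p_{j}^{\ell} = 1$ for some finite $\ell$ (torsion genuinely occurs when $\widetilde{r}$ is a proper power, cf.\ Theorem~\ref{Howlocind}, and $p_{j}$ could even become trivial). In that case the pushout with the torsion-free group $B_{j}$ is $\bigl((T \ast H)/\langle \! \langle \widetilde{r} \rangle \! \rangle\bigr) \ast_{p_{j}=q_{j}} \bigl(B_{j}/\langle \! \langle q_{j}^{\ell} \rangle \! \rangle\bigr)$, and the left-hand factor --- which carries your copy of $S \ast H$ --- survives into this pushout only if $q_{j}$ has order \emph{exactly} $\ell$ in $B_{j}/\langle \! \langle q_{j}^{\ell} \rangle \! \rangle$. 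If $q_{j}$ had smaller order $\lambda$ there, the relation $p_{j}^{\lambda}=1$ would propagate back and could collapse the group you are trying to embed. So these branches do interact with $S \ast H$, precisely through possible torsion on the attaching edge-words.

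Proving that $q_{j}$ has order exactly $\ell$ in $U^{(j)}/\langle \! \langle q_{j}^{\ell} \rangle \! \rangle$ is the substantive content of the paper's proof and is where the tree-product hypotheses are actually used: one kills all other edge-words of the branch, uses the staggered structure of the edge-words together with Remark~\ref{remspecialfree} and Corollary~\ref{corstaggered} to reduce to the single free vertex-group containing $q_{j}$, and then invokes the Karrass--Magnus--Solitar theorem (Theorem~\ref{thmMaKaSo}) on torsion in one-relator groups. This step cannot be replaced by the observation that the pushout identification is ``categorically automatic''. (A minor structural remark: the paper deletes $T \cup S$ rather than $T$, so that $S \ast H$ is amalgamated in wholesale at the first step over $(S \cap T) \ast H$ and no case distinction between $J_{1}$ and its complement arises; that simplification is cosmetic compared with the missing torsion argument.)
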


\begin{proof}
First, we note that $T \cup S$ and $T \cap S$ are subtree-products of $G$ since $T$, $S$ are subtree-products of $G$ and $T$ contains by assumption at least one vertex-group of $S$. We may define
\begin{eqnarray} \label{eqp0}
P^{(0)} \ := \ \big( ( T \ast H) / \langle \! \langle \widetilde{r} \rangle \! \rangle \big) \underset{(S \cap T) \ast H}{\ast} \big( S \ast H \big),
\end{eqnarray}
where the embedding of the amalgamated subgroup into the left factor follows due to the assumption. Let $U^{(j)}$ ($j \in \{1,2,\dots,k\}, k \in \mathbb{N}$) be the branch-products obtained by deleting every vertex-group of $T$ and $S$ along with the edge-relations of the edges adjacent to $T$ or $S$. Further, let $p_{j}=q_{j}$ ($j \in \{1,2,\dots,k\}, k \in \mathbb{N}$) with $q_{j} \in U^{(j)}$ be the edge-relations of the edges connecting $U^{(j)}$ to $G \ominus U^{(j)}$. Let $\ell_{1} \in \mathbb{N} \cup \{\infty\}$ be the smallest power such that $p_{1}^{\ell_{1}}$ is trivial in $P^{(0)}$. We prove the following claim.
\begin{center}
\textit{For all $\ell \in \mathbb{N}$, $j \in \{1,2,\dots,k\}$ the element $q_{j}$ is of order $\ell$ in $U^{(j)} / \langle \! \langle q_{j}^{\ell} \rangle \! \rangle$.}
\end{center}
To prove that claim we consider the quotient group $\widetilde{U}^{(j)}$ of $U^{(j)}$ which we construct by taking the quotient with the normal closure of $q_{j}^{\ell}$ and every edge-word of $U^{(j)}$ apart from $q_{j}$. This quotient group is a free product of staggered presentations over free groups in the sense of Definition~\ref{defstaggered}. Note that by cyclically reducing all stabilizing generators we arrive at a staggered presentation of locally indicable groups in the sense of Definition~\ref{defstagli} for the special case of free groups (see Remark~\ref{remspecialfree}). Let $Q$ be the vertex-group of $U^{(j)}$ containing $q_{j}$. If $q_{j}^{\lambda}$ would be trivial in $U^{(j)}/\langle \! \langle q_{j}^{\ell} \rangle \! \rangle$ for some $\lambda \in \mathbb{N}$ with $\lambda < \ell$, then $q_{j}^{\lambda}$ would also be trivial in $\widetilde{U}^{(j)}$. Using Corollary~\ref{corstaggered} (for the special case of free groups) we conclude that $q_{j}^{\lambda}$ would be trivial in $Q / \langle \! \langle q_{j}^{\ell} \rangle \! \rangle$. This contradicts Theorem~\ref{thmMaKaSo}. Therefore, we can define inductively for $j \in \{1,2,\dots,k\}$
\begin{eqnarray*}
P^{(j)} \ \ = \ \ P^{(j-1)} \underset{p_{j}=q_{j}}{\ast} (U^{(j)} / \langle \! \langle q_{j}^{\ell_{j}} \rangle \! \rangle),
\end{eqnarray*}
where $\ell_{j} \in \mathbb{N} \cup \{\infty\}$ is the smallest power such that $p_{j}^{\ell_{j}}$ is trivial in $P^{(j-1)}$. 
Note that $P^{(k)} = (G \ast H) / \langle \! \langle r \rangle \! \rangle$. Altogether, we constructed $(G \ast H)/ \langle \! \langle r \rangle \! \rangle$ by iteratively building amalgamated products starting with the group $S \ast H$ (see \eqref{eqp0}). Thus, $S \ast H$ embeds canonically into $(G \ast H) / \langle \! \langle r \rangle \! \rangle$.
\end{proof}

\begin{lemma} \label{lemprim}
Let $G$ be a tree-product, $S$ a subtree-product of $G$ and $H$ a locally indicable group. Further, let $r$ be an element of $G \ast H$ with contracted conjugate $\widetilde{r}$. Under the condition that the minimal tree-product $T$ of $r$ contains at least one vertex-group of $S$ and assuming that at least one edge-word of a leaf-group of $T$ that is also part of $G \ominus S$ is a primitive element, $S \ast H$ embeds canonically into $(G \ast H) / \langle \! \langle r \rangle \! \rangle$. 
\end{lemma}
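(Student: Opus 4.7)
My plan is to apply Lemma~\ref{lemconsiderT} as the organizing principle: it reduces the desired embedding to showing that $(S \cap T) \ast H$ embeds canonically into $(T \ast H) / \langle \! \langle \widetilde{r} \rangle \! \rangle$. Let $A$ denote the leaf-group of $T$ that lies in $G \ominus S$ and whose edge-word $p$ in $T$ (with edge-relation $p = q$ connecting $A$ to $T \ominus A$) is primitive in $A$, as guaranteed by hypothesis.

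The key algebraic move will be to use primitivity of $p$ to extend it to a basis of $A$, yielding a decomposition $A = \langle p \mid \rangle \ast F$ where $F$ is free on the remaining basis elements of $A$. Substituting this into $T = (T \ominus A) \ast_{p=q} A$ and absorbing the free factor $\langle p \mid \rangle$ via the identification $p = q$, one obtains $T \cong (T \ominus A) \ast F$, and hence $T \ast H = L \ast F$ with $L := (T \ominus A) \ast H$. Since $T \ominus A$ is a tree-product, it is locally indicable by Lemma~\ref{lemTPlocind}, so $L$ is locally indicable as a free product of locally indicable groups. Moreover, since $A$ lies in $G \ominus S$, every vertex-group of $S \cap T$ lies in $T \ominus A$, and thus $(S \cap T) \ast H$ is a subgroup of $L$.

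It therefore suffices to prove that $L$ embeds canonically into $(L \ast F) / \langle \! \langle \widetilde{r} \rangle \! \rangle$, which by Theorem~\ref{HowFS} will follow once I verify that the cyclic reduction of $\widetilde{r}$ in $L \ast F$ has length at least two. I will rule out the two length-one possibilities: first, $\widetilde{r}$ conjugate to an element of $L$ would exhibit a conjugate of $r$ inside $(T \ominus A) \ast H$, contradicting the minimality of $T$ established in Lemma~\ref{lemtreeprunique}, since $T \ominus A$ is a proper subtree-product of $T$; second, $\widetilde{r}$ conjugate to an element of $F \subseteq A$ would mean $r$ is conjugate in $G \ast H$ to an element of $A \ast H$, contradicting the defining property of a contracted conjugate in Definition~\ref{defmintreepr}.

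The main obstacle is the algebraic collapse in the second paragraph: recognizing that primitivity of $p$ is precisely what turns the amalgamated product $(T \ominus A) \ast_{p = q} A$ into the free product $(T \ominus A) \ast F$, which is what converts the tree-product setting into one where Howie's Freiheitssatz for free products of locally indicable groups directly applies. Once this is in place, the verification that $\widetilde{r}$ has the required length in $L \ast F$ is straightforward from the minimality of $T$ and the hypotheses on contracted conjugates, and the conclusion follows by combining the resulting embedding with Lemma~\ref{lemconsiderT}.
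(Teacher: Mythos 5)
Your proposal is correct and follows essentially the same route as the paper's proof: reduce via Lemma~\ref{lemconsiderT} to the minimal tree-product, use primitivity of $p$ to split $A$ as $\langle p \mid \rangle \ast F$ and absorb $\langle p \mid \rangle$ through the edge-relation so that the amalgam collapses to a free product of locally indicable groups, then apply Theorem~\ref{HowFS}. Your verification that the cyclic reduction of $\widetilde{r}$ has length at least two (ruling out conjugacy into $L$ by minimality of $T$ and into $F$ by Definition~\ref{defmintreepr}) is a slightly more explicit version of the paper's observation that $\widetilde{r}$ must contain a basis element of $A$ other than $p$, but it is the same argument in substance.
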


\begin{proof}
Due to Lemma~\ref{lemconsiderT} it is sufficient to consider the case that $G$ is the minimal tree-product $T$ of $r$. Let $A$ be a leaf-group of $G \ominus S$ and $G$ with edge-relation $p=q$, where $p \in A$ is a primitive element. Such a group exists by assumption. We extend $p$ to a basis of $A$ and replace the old basis with this new basis. Since the leaf-group $A$ is part of the minimal tree-product $T$ and all generators $p^{\pm 1}$ in $\widetilde{r}$ can be replaced with the help of the edge-relation $p=q$ we know that $\widetilde{r}$ contains at least one basis element $a \neq p$ of $A$. We have
\begin{eqnarray*}
(G \ast H) / \langle \! \langle r \rangle \! \rangle \ = \ \big( (G \ominus A) \underset{q=p}{\ast} \langle p \rangle \ast (A / \langle \! \langle p \rangle \! \rangle) \ast H \big) / \langle \! \langle r \rangle \! \rangle \ = \ \big( (G \ominus A) \ast H \ast (A / \langle \! \langle p \rangle \! \rangle) \big) / \langle \! \langle r \rangle \! \rangle.
\end{eqnarray*}
Because of Theorem~\ref{Howlocind} and $S \subseteq G \ominus A$, we get the desired embedding.
\end{proof}

The following proposition is the first step in proving our Freiheitssatz for tree-products (see Theorem~\ref{thmFrTP}).

\begin{proposition} \label{propemb}
Let $G$ be a tree-product and $S$ be a subtree-product of $G$. Further, let $H$ be a non-trivial locally indicable group and $r$ a contracted conjugate in $(G \ast H) \backslash G$. Assuming that the minimal tree-product of $r$ contains at least one vertex-group of respectively $S$ and $G \ominus S$, $S \ast H$ embeds canonically into $(G \ast H) / \langle \! \langle r \rangle \! \rangle$. 
\end{proposition}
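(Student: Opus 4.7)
My plan is to argue by induction on the pair $(||r||-|G|,\sigma)$, ordered lexicographically as in Lemma~\ref{lemlexi} (first component dominating). First, I would use Lemma~\ref{lemconsiderT} to replace $(G,S)$ by $(T,S\cap T)$, where $T$ is the minimal tree-product of $r$; the hypothesis then guarantees that both the new $S$ and $G\ominus S$ contain at least one vertex-group. If some leaf-group of $G\ominus S$ has a primitive edge-word, Lemma~\ref{lemprim} finishes the job immediately. Otherwise, fix a leaf-group $A$ of $G\ominus S$ with edge-relation $p=q$, $p\in A$, and — applying the procedure of Remark~\ref{remroot}, legitimised on the ambient group by Lemma~\ref{lemconsiderroot}, together with a leaf-isomorphism — arrange a basis element $\tilde a$ of $A$ with $p_{\tilde a}=0$. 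I then split on the exponent sum $r_{\tilde a}$.

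In the principal case $r_{\tilde a}=0$, I would apply the leaf-homomorphism $\varphi\colon G*H\to\mathbb Z$ sending $\tilde a\mapsto 1$. Remark~\ref{remkernel} identifies $\ker\varphi=K*\widetilde H$ with $K$ a tree-product whose edge-word $p_i\in\widetilde A$ is strictly shorter than $p$ and $\widetilde H=\bigast_{i\in\mathbb Z}H_i$. Algorithm~\ref{alg} combined with Lemma~\ref{lemalg} produces a contracted conjugate $r_0\in\ker\varphi$ of $r$ with minimal tree-product $T_0\subseteq K$, and by Lemma~\ref{lemlexi} the inductive measure strictly decreases: $(||r_0||-|T_0|,\sigma_0)<(||r||-|G|,\sigma)$. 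Since $r\in(G*H)\setminus G$ uses some $H_j$, the shifted conjugates $r_i:=\tilde a^{-i}r\tilde a^i$ form a staggered set (Definition~\ref{defstagli}) with $U=K$, $V_i=H_i$. Now take any $w\in S*H=S*H_0$: we have $\varphi(w)=0$ (as $\tilde a\notin S$), so $w\in\ker\varphi$, and if $w\in\langle\!\langle r\rangle\!\rangle_{G*H}$ then $w\in\langle\!\langle r_i:i\in\mathbb Z\rangle\!\rangle_{\ker\varphi}$; Corollary~\ref{corstaggered} confines $w$ to $\langle\!\langle r_\ell\rangle\!\rangle$ for indices $\ell$ with $\alpha_{r_\ell}=\omega_{r_\ell}=0$ — at most one such $\ell$, existing only when $\alpha_r=\omega_r$. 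If no such $\ell$ exists, $w=1$ immediately; if exactly one exists, the induction hypothesis applied to $(K,S,H_0,r_\ell)$ — after absorbing, via Lemma~\ref{lemconsiderT}, the degenerate sub-cases in which the minimal tree-product of $r_\ell$ fails to straddle $S$ and $K\ominus S$ (handled then by Lemma~\ref{lemprim} or Theorem~\ref{HowFS}) — delivers the embedding $S*H_0\hookrightarrow(\ker\varphi)/\langle\!\langle r_\ell\rangle\!\rangle$, whence $w=1$.

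In the remaining case $r_{\tilde a}\neq 0$ I would mirror Cases 2 and 3 of the proof of Lemma~\ref{lemembl}. Choosing a second basis element $\tilde b$ of $A$ (after an additional leaf-isomorphism or root-product if necessary) so that the matrix $\bigl(\begin{smallmatrix}p_{\tilde a}&p_{\tilde b}\\r_{\tilde a}&r_{\tilde b}\end{smallmatrix}\bigr)$ has full rank, introduce a new stable letter $c$ via $G^{+}:=G*_{p=c}(\langle c\mid\rangle*H)$. Theorem~\ref{thmHowequ} (applicable since $G\ominus A$ is locally indicable by Lemma~\ref{lemTPlocind}) combined with Remark~\ref{remmatrix} then embeds $\langle c\mid\rangle*H\cong\langle p\mid\rangle*H$ into $G^{+}/\langle\!\langle r\rangle\!\rangle\cong\bigl((G*H)/\langle\!\langle r\rangle\!\rangle\bigr)*_{p=c}\langle c\mid\rangle$, and a final appeal to Lemma~\ref{lemembl} (inside the amalgamated-product decomposition of $G*H$ along $\langle p\rangle$) promotes this to the full embedding $S*H\hookrightarrow(G*H)/\langle\!\langle r\rangle\!\rangle$.

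The main obstacle I anticipate is the bookkeeping at the end of the principal case: identifying precisely the residual relation $r_\ell$ singled out by Corollary~\ref{corstaggered}, verifying that the tuple $(K,S,H_0,r_\ell)$ satisfies the hypotheses of Proposition~\ref{propemb} (notably that the minimal tree-product of $r_\ell$ still straddles $S$ and $K\ominus S$), and cleanly dispatching the degenerate sub-cases when it does not — via Lemma~\ref{lemconsiderT}, Lemma~\ref{lemprim}, and the classical Freiheitssatz (Theorem~\ref{HowFS}).
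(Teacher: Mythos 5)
Your overall strategy coincides with the paper's (reduction to the minimal tree-product via Lemma~\ref{lemconsiderT}, the primitive edge-word shortcut via Lemma~\ref{lemprim}, root-products and leaf-isomorphisms to force $p_{\widetilde a}=0$, and in the main case a leaf-homomorphism followed by Corollary~\ref{corstaggered} and the induction hypothesis), but the induction as you set it up is not well-founded, and you give no base case to repair this. The lexicographic order on $\mathbb{Z}\times\mathbb{N}$ admits infinite descending chains, and the first component $||r||-|G|$ really can decrease without bound along your recursion (a fan-generator can multiply interior vertex-groups while $||r||$ stays put). The paper therefore inducts on $(||r||-|G\ominus S|,\sigma)$ and supplies a genuine base case $||r||-|G\ominus S|\leqslant 0$: there one finds a vertex-group $B$ of $G\ominus S$ not used by $r$, splits $G$ as an amalgam of $B$ with the remaining branch-products over the free group generated by the edge-words of $B$ (freeness on the $B$-side resting on asphericity of staggered presentations via Remark~\ref{rembasisinst}, and on the other side on torsion-freeness of the factors), and then applies Theorem~\ref{HowFS}. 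This argument is not optional; without it, or some substitute lower bound on the first component, your induction does not terminate.

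Second, your treatment of the case $r_{\widetilde a}\neq 0$ does not deliver the claimed conclusion. Transplanting Case~2 of Lemma~\ref{lemembl} via the stable letter $c$ produces at best an embedding of $\langle p\mid\rangle\ast H$, and the ``final appeal to Lemma~\ref{lemembl}'' cannot promote this to $S\ast H$: the relator $r$ is spread over the whole tree rather than confined to $A\ast H$, so Lemma~\ref{lemembl} does not apply, and $S$ need not lie in any subgroup you have controlled. The paper's Case~1 is both simpler and correct: take the two equations $r=1$ and $pq^{-1}=1$ in the variables $\mathcal{A}$ over the locally indicable base group $(G\ominus A)\ast H$; the full-rank $2\times 2$ minor makes this an independent system, so Theorem~\ref{thmHowequ} and Remark~\ref{remmatrix} embed $(G\ominus A)\ast H\supseteq S\ast H$ into $(G\ast H)/\langle\!\langle r\rangle\!\rangle$ directly. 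Finally, the ``degenerate sub-case'' you propose to dispatch with Theorem~\ref{HowFS} (the minimal tree-product of $r_\ell$ missing $S_0$ entirely) is not covered by that theorem, which embeds $K$ and $\widetilde H$ separately but not the mixed subgroup $S_0\ast H_0$; the paper's Case~2.2 requires a further application of the induction hypothesis to the vertex-group $D$ adjacent to the component of $K\ominus T$ containing $S_0$, followed by an amalgamated-product assembly.
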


\begin{proof}
Because of Lemma~\ref{lemconsiderT}, we may assume that $G$ is the minimal tree-product of $r$. Let $\sigma$ be the boundary-length of $G$ and let $|G \ominus S|$ be the number of all vertex-groups of $G \ominus S$. We prove the desired statement by induction over the tuple $(||r||-|G \ominus S|,\sigma)$ in lexicographical order (where the first component is weighted higher).

As the base of induction we consider the cases $||r||-|G \ominus S| \in \mathbb{Z}$, $\sigma=2$ and $||r||-|G \ominus S| \leqslant 0$, $\sigma \in \mathbb{N}$. If $|G \ominus S|=1$, let $A:=G \ominus S$. Elsewise we choose a leaf-group $A$ of $G$ and $G \ominus S$. In the case $\sigma=2$ the edge-word of $A$ is a primitive element of $A$. Therefore, we derive the desired embedding with Lemma~\ref{lemprim}. In the case $||r|| - |G \ominus S| \leqslant 0$ there is a vertex-group $B$ of $G \ominus S$ such that $r$ contains no basis element of $B$. The vertex-group $B$ cannot be a leaf-group of $G$ since we assumed that $G$ is the minimal tree-product of $r$. We consider the branch-products of $G$ which arise by deleting the vertex-group $B$ along with the edge-relations of the adjacent edges in $G$. Since $B$ is no leaf-group, there is at least one branch-product $Y$ among the resulting branch-products which is completely contained in $G \ominus S$. We denote the free product of the remaining branch-products by $R$. Let $p_{i}=q_{i}$ ($i \in \mathcal{I}$) with $p_{i} \in B$ be the edge-relations of the edges adjacent to $B$. Since, by assumption, $r$ is no element of $G$, Theorem~\ref{HowFS} gives us the canonical embedding of $Y \ast R$ into $(Y \ast R \ast H)/\langle \! \langle r \rangle \! \rangle$. Let $R^{(1)},R^{(2)}, \dots, R^{(k)}$ ($k \in \mathbb{N}$) be the free factors of $R$. Then every $q_{i}$ is contained in different factors of the free product $Y \ast R^{(1)} \ast R^{(2)} \ast \dots \ast R^{(k)}$. Because of Lemma~\ref{lemTPlocind} all free factors are locally indicable and therefore in particular torsion-free. Thus, the group $Q$ freely generated by $\{q_{i} \mid i \in \mathcal{I} \}$ embeds into $Y \ast R$ and hence into ($Y \ast R \ast H) / \langle \! \langle r \rangle \! \rangle$. Since the elements $p_{i}$ ($i \in \mathcal{I}$) of $B$ form a staggered set and $B / \langle \! \langle p_{i} \mid i \in \mathcal{I} \rangle \! \rangle$ is a staggered presentation over free groups, the group $P$ freely generated by $\{p_{i} \mid i \in \mathcal{I} \}$ embeds canonically into $B$ as noticed in Remark~\ref{rembasisinst}. So we may write:
\begin{eqnarray*}
(G \ast H) / \langle \! \langle r \rangle \! \rangle \ \cong \ B \underset{P \cong Q}{\ast} \big((Y \ast R \ast H) / \langle \! \langle r \rangle \! \rangle \big)
\end{eqnarray*}
Finally, note that $R \ast H$ embeds into the right factor of the amalgamated product by Theorem~\ref{HowFS}. Therefore, $S \ast H$ embeds into $(G \ast H) / \langle \! \langle r \rangle \! \rangle$.

For the induction step we consider the leaf-group $A$ of $G \ominus S$ and $G$ along with the edge-relation $p=q$ ($p \in A$) of the edge adjacent to $A$. If the cyclical reduction of $p$ contains only one basis element $a$ of $A$, we have $p=v^{-1}a^{\pm 1}v$, where $v$ consists only of stabilizing generators, due to Definition~\ref{deftreepr}. Thus, $p$ is a primitive element of $A$ and the desired embedding follows analogously to the induction base for $\sigma=2$. Let $p$ contain at least two different non-stabilizing basis elements $a$, $b$ of $A$. We recall the notation $p_{b}$ for the exponent sum of $p \in A$ respectively $b$.

If $p_{a}=p_{b}=0$, we consider the root-product $\widetilde{G}$ of $G$ given through $\widetilde{a}^{r_{b}}=a$, which is a tree-product because of $p \neq v^{-1}a^{\pm 1}v$. By Lemma~\ref{lemconsiderroot} it is sufficient to prove the embedding of $S \ast H$ into $( \widetilde{G} \ast H) / \langle \! \langle r \rangle \! \rangle$. We apply the leaf-isomorphism given by $\widetilde{b}:=b \widetilde{a}^{r_{a}}$ or $\widetilde{b} := \widetilde{a}^{r_{a}} b$ (cf. Remark~\ref{remoneoptionok}). In slight abuse of notation we denote the image of $\widetilde{G}$ under the leaf-isomorphism and the new contracted conjugate again by $\widetilde{G}$ and $r$. The new presentation of $r \in \widetilde{G} \ast H$ is also a contracted conjugate with the same length as the old presentation $r \in G \ast H$. Since $p_{\widetilde{a}}=0$ and $p$ is the only edge-word possibly containing $\widetilde{a}$, the exponent sum $r_{\widetilde{a}}$ is well-defined. We have $p_{\widetilde{a}}=r_{\widetilde{a}}=0$. If $p$ does not contain the generator $\widetilde{a}$, the tree-product $\widetilde{G}$ has a shorter boundary-length than $G$ and the desired embedding follows by the induction hypothesis. If $p$ contains $\widetilde{a}$, we go directly to Case 2. Thus, in the following part of the proof up to Case 2 we can assume that at least one exponential sum $p_{a}$ or $p_{b}$ is not $0$. W.\,l.\,o.\,g. let $p_{b} \neq 0$. Similar to the situation $p_{a}=p_{b}=0$ we construct through $a=\widetilde{a}^{p_{b}}$ the root-product $\widetilde{G} := G \ast_{a=\widetilde{a}^{p_{b}}} \langle \widetilde{a} \mid \rangle$ of $G$ and apply the leaf-isomorphism of $\widetilde{G}$ which is given by $\widetilde{b}:=b \widetilde{a}^{p_{a}}$ or $\widetilde{b} := \widetilde{a}^{p_{a}} b$ (cf. Remark~\ref{remoneoptionok}). We have $p_{\widetilde{a}}=0$. If $p$ does not contain the generator $\widetilde{a}$, the tree-product $\widetilde{G}$ has a shorter boundary-length than $G$ and the desired embedding follows by the induction hypothesis. Thus, assume that $p$ contains $\widetilde{a}$.\medskip

\noindent\underline{Case 1.} Let $r_{\widetilde{a}} \neq 0$.\\ 
By the preliminary considerations we have $p_{b} \neq 0$ and therefore also $p_{\widetilde{b}} \neq 0$. So (for some arbitrary fixed presentation $r$) the matrix
$\begin{pmatrix}                                
p_{\widetilde{a}} & p_{\widetilde{b}} \\                                               
r_{\widetilde{a}} & r_{\widetilde{b}}                                               
\end{pmatrix}$
has full rank and the desired embedding follows from Remark~\ref{remmatrix} and Theorem~\ref{thmHowequ}.

\noindent\underline{Case 2.} Let $r_{\widetilde{a}}=0$.\\
We consider the leaf-homomorphism $\varphi \colon \widetilde{G} \ast H \rightarrow \mathbb{Z}$ with $\varphi(\widetilde{a})=1$. Let $\mathcal{A}$ be the basis of the leaf-group $A$ in $G$. As noticed in Remark~\ref{remkernel}, we have $\ker(\varphi)=K \ast \widetilde{H}$, where $\widetilde{H} = \bigast_{\ell \in \mathbb{Z}} H_{\ell}$ for $H_{\ell} := a^{-\ell} H a^{\ell}$ ($\ell \in \mathbb{Z}$) and $K$ is a tree-product which can be constructed in the following way. We start with a vertex-group 
\begin{eqnarray*}
\widetilde{A} := \langle \widetilde{\mathcal{A}} \mid \rangle \ \ \text{for} \ \ \widetilde{A} := \{ \widetilde{a}^{-\ell} y \widetilde{a}^{\ell} \mid y \in \mathcal{A} \backslash \{a\}, \ell \in \mathbb{Z} \}
\end{eqnarray*}
and connect $\widetilde{A}$ over respectively one edge-relation $p_{\ell} = q_{\ell}$ ($p_{\ell} \in \mathcal{A}$) with countably infinitely many copies ($\widetilde{G} \ominus A)_{\ell} := a^{-\ell} ( \widetilde{G} \ominus A)a^{\ell}$ ($\ell \in \mathbb{Z}$) of $\widetilde{G} \ominus A$. Note that $|p_{\ell}|_{\widetilde{A}} < |p|_{A}$ for all $\ell \in \mathbb{Z}$ (cf. Remark~\ref{remroot}). 

By assumption, the contracted conjugate $r \in G \ast H$ uses the factor $H$. We define $r_{i} :=a^{-i} r a^{i}$. Then $\ker(\varphi) / \langle \! \langle r_{i} \mid i \in \mathbb{Z} \rangle \! \rangle$ is a staggered presentation for $V_{i} = H_{i}$ (cf. Definition~\ref{defstagli}). Algorithm~\ref{alg} gives us a contracted conjugates $r_{i}^{*}$ of $r_{i} \in \ker(\varphi)$ ($i \in \mathbb{Z}$) with $||r_{i}^{*}||_{\ker(\varphi)} \leqslant ||r||_{G \ast H}$ (see Lemma~\ref{lemalg}).  Since all elements of $S$ which are not contained in the kernel of $\varphi$ cannot be elements of the normal closure of $r$ in $\widetilde{G} \ast H$, it suffices to prove the embedding of the copy $S_{0} \ast H_{0} \subset (\widetilde{G} \ominus A)_{0} \ast H_{0}$ of $S \ast H$ in $\ker(\varphi) / \langle \! \langle r_{i}^{*} \mid i \in \mathbb{Z} \rangle \! \rangle$. By renaming, if necessary, let $r^{*}:=r_{0}^{*}$ be w.\,l.\,o.\,g. an element $r_{i}^{*}$ ($i \in \mathbb{Z}$) which uses the free factor $H_{0}$ of $\ker(\varphi)$. Then, by Corollary~\ref{corstaggered}, an element $w \in S_{0} \ast H_{0}$ is trivial in $\ker(\varphi)/\langle \! \langle r_{i}^{*} \mid i \in \mathbb{Z} \rangle \! \rangle$ if and only if it is trivial in $\ker(\varphi) / \langle \! \langle r^{*} \rangle \! \rangle$. The embedding that remains to show depends on the minimal tree-product $T$ of $r^{*}$:

\noindent\underline{Case 2.1.} $T$ contains at least one vertex-group of $S_{0}$.\\
Applying Lemma~\ref{lemconsiderT} on the subtree-product $S_{0}$ of $K$ we see that it suffices to consider $T \ast \widetilde{H}$ instead of $\ker(\varphi)$. So our aim is to prove the embedding of $(S_{0} \cap T) \ast H_{0}$ into $(T \ast \widetilde{H}) / \langle \! \langle r^{*} \rangle \! \rangle$. Note that $\widetilde{a}$ is either a reduction- or fan-generator (see Definition~\ref{defredgen}). If $\widetilde{a}$ is a reduction-generator of $r \in \widetilde{G}$, we have $T = \widetilde{A} \ast_{p_{0}=q_{0}} (\widetilde{G} \ominus A)_{0}$. As noticed above we have $|p_{0}|_{\widetilde{A}} < |p|_{A}$. Therefore, the boundary-length of $T$ is shorter than the boundary-length of $G$ and the desired embedding follows by the induction hypothesis.\\
Thus, assume that $\widetilde{a}$ is a fan-generator of $r \in \widetilde{G}$. We define $S' := S_{0} \cap T$. Using this notation our aim is to prove the embedding of $S' \ast H_{0}$ into $(T \ast \widetilde{H})/\langle \! \langle r^{*} \rangle \! \rangle$. We want to apply the induction hypothesis. Because of $||r^{*}||_{\ker(\varphi)} \leqslant ||r||_{G \ast H}$ it suffices to show $|T \ominus S'| > |G \ominus S|$. Note that for every leaf-group of $G$ there is at least one copy of this leaf-group in $T$ since $G$ is the minimal tree-product of $r$ and $T$ is the minimal tree-product of $r_{0}$. Because every rooted tree is the union of all unique paths from the root to the leafs, $T$ contains at least one copy of every vertex-group of $T \ominus S'$ and we get $|T \ominus S'| \geqslant |G \ominus S|$. By assumption, $\widetilde{a}$ is a fan-generator. Thus, $T$ contains at least two copies $C_{0}$, $C_{\mu}$ of the unique vertex-group $C$ of $G$ adjacent to $A$. If $C$ is a vertex-group of $G \ominus S$, the vertex-groups $C_{0}$ and $C_{\mu}$ are vertex-groups of $T \ominus S'$. If $C$ is a vertex-group of $S$ then $C_{\mu}$ is a vertex-group of $T \ominus S'$. In both cases we have an additional vertex-group of $T \ominus S'$ and therefore $|T \ominus S'|>|G \ominus S|$. By applying the induction hypothesis for ($||r^{*}||-|T \ominus S'|,\sigma'$), where $\sigma'$ is the boundary-length of $T$, we derive the desired embedding.

\noindent\underline{Case 2.2.} $T$ contains no vertex-group of $S_{0}$.\\
Let $S'$ be the free factor of $K \ominus T$ containing $S_{0}$. Further, let $D$ be the vertex-group of $K \ominus S'$ that is connected by an edge to $S'$ and let $d=e$ with $d \in D$ be the edge-relation of that edge. We first want to show that $D \ast \widetilde{H}$ embeds canonically into  $(T \ast \widetilde{H})/\langle \! \langle r^{*} \rangle \! \rangle$.\\
If $\widetilde{a}$ is a reduction-generator of $r \in \widetilde{G}$, we have $D=\widetilde{A}$, $T = \widetilde{A} \ast_{p_{j}=q_{j}} (\widetilde{G} \ominus A)_{j}$ with $j \in \mathbb{Z} \backslash \{0\}$ and $|p_{j}|_{\widetilde{A}} < |p|_{A}$, so the boundary-length of $T$ is shorter than the boundary length of $G$ (cf. Case 2.1). Because of $|T|=|G|$ and $|S| \geqslant 1$ we can therefore apply the induction hypothesis to conclude that $D \ast \widetilde{H}$ embeds canonically into $(T \ast \widetilde{H})/\langle \! \langle r^{*} \rangle \! \rangle$. In the case that $\widetilde{a}$ is a fan-generator we have $|T| \geqslant |G| +1$ so we can also apply the induction hypothesis to conclude the desired embedding of $D \ast \widetilde{H}$.\\
Using Lemma~\ref{lemconsiderT} it follows that $D \ast \widetilde{H}$ embeds into $((K \ominus S') \ast \widetilde{H})/\langle \! \langle r^{*} \rangle \! \rangle$. Thus, we may write
\begin{eqnarray*}
\ker(\varphi)/\langle \! \langle r^{*} \rangle \! \rangle \ \ = \ \ \big( ((K \ominus S') \ast \widetilde{H})/\langle \! \langle r^{*} \rangle \! \rangle \big) \ \underset{\mathbb{Z} \ast \widetilde{H}}{\ast} \ (S' \ast \widetilde{H}),
\end{eqnarray*}
where the generator of $\mathbb{Z}$ is mapped to $d$ in the left and $e$ in the right factor. Finally, we conclude that $S' \ast \widetilde{H}$ and therefore in particular $S_{0} \ast H_{0}$ embeds into $\ker(\varphi)/\langle \! \langle r^{*} \rangle \! \rangle$.
\end{proof}

\section{Freiheitssatz for tree-products}

Using Proposition~\ref{propemb} we prove the following Freiheitssatz, which in comparison to Proposition~\ref{propemb} omits the condition that $H$ is non-trivial.

\begin{theorem} [\textbf{Freiheitssatz for tree-products}] \label{thmFrTP}
Let $G$ be a tree-product, $S$ a subtree-product of $G$ and $H$ a (possibly trivial) locally indicable group. Further, let $r$ be an element of $G \ast H$ whose minimal tree-product contains at least one vertex-group of respectively $S$ and $G \ominus S$. Then $S \ast H$ embeds canonically into $(G \ast H) / \langle \! \langle r \rangle \! \rangle$. 
\end{theorem}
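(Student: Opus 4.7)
The theorem strengthens Proposition~\ref{propemb} by omitting the hypothesis that $H$ is non-trivial. The proof proceeds by case analysis on whether $r$ uses the factor $H$. If $r$ uses $H$ (equivalently, $H$ is non-trivial and $r \in (G \ast H) \setminus G$), then Proposition~\ref{propemb} applies directly and yields the required embedding. So the new content of the theorem concerns the case $r \in G$, which covers in particular the situation where $H$ is trivial.

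Suppose $r \in G$. Since $r$ lies in the free factor $G$ of $G \ast H$, the natural isomorphism $(G \ast H)/\langle \! \langle r \rangle \! \rangle \cong (G/\langle \! \langle r \rangle \! \rangle_G) \ast H$ shows it suffices to embed $S$ canonically into $G/\langle \! \langle r \rangle \! \rangle_G$; so we may assume $H = \{1\}$. To set up a situation close to that of Proposition~\ref{propemb}, we introduce the auxiliary non-trivial locally indicable group $H' := \langle t \mid \rangle \cong \mathbb{Z}$ and note that an embedding of $S \ast H'$ into $(G \ast H')/\langle \! \langle r \rangle \! \rangle$ would yield the sought embedding of $S$ into $G/\langle \! \langle r \rangle \! \rangle_G$, via the retraction $G \ast H' \twoheadrightarrow G$ sending $t$ to $1$ (which descends to the corresponding quotients since $r \in G$).

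For this last embedding Proposition~\ref{propemb} still does not apply as a black box, because $r \in G$ remains a relator that does not use $H'$. We therefore retrace the proof of Proposition~\ref{propemb} with $H$ replaced by $H'$, adapting each point where the hypothesis $r \notin G$ was essential, namely the induction base (where Theorem~\ref{HowFS} was invoked on $Y \ast R \ast H$ on the grounds that $r$ gives a cyclically reduced word of length at least $2$ there) and Case~2 (where the staggered presentation structure on $\ker(\varphi)/\langle\!\langle r_i \mid i \in \mathbb{Z} \rangle\!\rangle$ was derived from $r$'s interaction with $H$). The crucial observation is that the hypothesis on the minimal tree-product $T$ of $r$ forces $|T| \geqslant 2$, so every contracted conjugate of $r$ uses at least two vertex-groups of $G$; this internal multiplicity takes over the role of $H$. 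Concretely, in the induction base either $r$ uses several of the branches $Y, R^{(1)}, \dots, R^{(k)}$ (in which case Theorem~\ref{HowFS} applies internally to $G \ast H'$), or $r$ is contained in a single branch $R^{(j)}$ that must itself contain vertex-groups of both $S$ and $G \ominus S$, reducing the problem to a tree-product of strictly smaller size, to which we apply the induction hypothesis. In Case~2 the varying factors $V_i$ of the staggered presentation of Definition~\ref{defstagli} are taken to be copies of a suitable vertex-group of $T$ (produced by the leaf-homomorphism and Algorithm~\ref{alg}) rather than the copies $H_i$ of $H$.

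The main obstacle will be verifying that each adapted reduction strictly decreases the lexicographic induction measure $(||r|| - |G \ominus S|, \sigma)$ used in Proposition~\ref{propemb} (or else strictly decreases the size $|G|$ of the tree-product) so that the recursion terminates, and that the substituted staggered presentation satisfies the hypotheses of Definition~\ref{defstagli} so that Corollary~\ref{corstaggered} can still be applied in the analogue of Case~2.
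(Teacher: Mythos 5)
You correctly isolate the new content (the case $r\in G$, reduction to $H=\{1\}$, and the two places where the proof of Proposition~\ref{propemb} uses $r\notin G$), and your treatment of the induction base is close in spirit to the paper's, though you omit the subcase where the minimal tree-product of $r\in Z_{A}\ast R_{A}$ is not contained in $Z_{A}$: there $r\in A\ast R_{A}$ and one needs Lemma~\ref{lemembl} (the embedding of $\langle p\mid\rangle\ast R_{A}$ into $(A\ast R_{A})/\langle\!\langle r\rangle\!\rangle$), which is not obtainable from Theorem~\ref{HowFS} alone.

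The genuine gap is in your Case~2. When $r\in G$, the kernel of the leaf-homomorphism decomposes as a tree-product $K$ in which the copies $Z_{i}$ of a leaf-group of $\widetilde{G}$ are \emph{branch-products of $K$ glued along edge-relations}, not free factors. Definition~\ref{defstagli} and Corollary~\ref{corstaggered} require the staggered pieces $V_{i}$ to be free factors of $U\ast\bigast_{i}V_{i}$ (the proof of Corollary~\ref{corstaggered} rests on Theorem~\ref{HowFS} applied to free products), so you cannot ``take the $V_{i}$ to be copies of a suitable vertex-group of $T$'' and invoke that corollary: the hypothesis you flag as ``the main obstacle'' in your last paragraph actually fails. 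This is exactly why the paper introduces a new notion of staggered presentations \emph{over tree-products} with respect to branch-products (via $\alpha$-/$\omega$-branch-limits) and a corresponding localization statement, Proposition~\ref{propmain}. That proposition cannot be proved independently of the theorem — the amalgamated-product decompositions in its proof need the canonical embeddings supplied by the Freiheitssatz itself at lower stages — so the paper runs a \emph{simultaneous} induction on $(\lVert r\rVert-|G|,\sigma)$, with Statement~1 showing that the theorem at a given measure yields the proposition at that measure, and Lemma~\ref{lemlexi} guaranteeing the measure drops after the leaf-homomorphism. Your proposal contains no substitute for this mechanism, so the recursion in your Case~2 does not close.
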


In a similar way to the proof of the Magnus-Freiheitssatz (cf. \cite{ArtMagnus}), the proof of Theorem~\ref{thmFrTP} will be simultaneous to the proof of Proposition~\ref{propmain}. Before we formulate this proposition, we introduce $\alpha$- (resp. $\omega$-)branch-limits and staggered presentations for tree-products analogously to Definition~\ref{defstagli}.

\begin{notation} \label{notKij}
Let $\mathcal{I} \subseteq \mathbb{Z} \cup \{\pm \infty\}$ be an index set, $\widetilde{H}$ be a locally indicable group, let $K$ be a tree-product and $Z_{i}$ ($i \in \mathcal{I}$) be branch-products of $K$ that share no common vertex-groups. For $m,n \in \mathbb{Z} \cup \{\pm \infty\}$ we define
\begin{eqnarray*}
K_{m,n} \ = \  K \ominus \{Z_{\ell} \mid \ell \in \mathcal{I} \land (\ell < m \ \lor \ n < \ell) \}.
\end{eqnarray*} 
\end{notation}

\begin{definition}[\textbf{staggered presentations over tree-products and {\boldmath $\alpha$}-/\boldmath{$\omega$}-branch-limits}]
Let $\mathcal{I}, \mathcal{J} \subseteq \mathbb{Z}$ be index sets, $\widetilde{H}$ be a locally indicable group, let $K$ be a tree-product and $Z_{i}$ ($i \in \mathcal{I}$) be branch products of $K$ that share no common vertex-groups. Further, let $r_{j}$ ($j \in \mathcal{J}$) be elements of $K \ast \widetilde{H}$ such that every minimal tree-product of these elements contains at least one vertex-group of respectively $Z_{i}$ and $K \ominus Z_{i}$ for at least one $i \in \mathcal{I}$. By $\alpha_{r_{j}}$ resp. $\omega_{r_{j}}$ we denote the greatest resp. smallest index $\lambda \in \mathbb{Z} \cup \{\pm \infty\}$ such that the minimal tree-product of $r_{j}$ is contained in $K_{\lambda,\infty} \ast \widetilde{H}$ resp. $K_{-\infty,\lambda} \ast \widetilde{H}$. We call $\alpha_{r_{j}}$ and $\omega_{r_{j}}$ the \emph{$\alpha$- (resp. $\omega$-)branch-limit} of $r_{j}$ in $K$. If we have $\alpha_{r_{m}} < \alpha_{r_{n}}$ and $\omega_{r_{m}} < \omega_{r_{n}}$ for all $m$, $n \in \mathcal{J}$ with $m<n$ we say that $(K \ast \widetilde{H}) / \langle \! \langle r_{j} \mid \mathcal{J} \rangle \! \rangle$ is a \emph{staggered presentation (over the tree-product $K$ with respect to the branch-products $Z_{i}$ ($i \in \mathcal{I}$))}.
\end{definition}

\begin{proposition} \label{propmain}
Let $(K \ast \widetilde{H}) / \langle \! \langle r_{j} \mid \mathcal{J} \rangle \! \rangle$ be a staggered presentation over a tree product $K$ with respect to branch-products $Z_{i}$ ($i \in \mathcal{I}$). Further, let $u$ be an element of the normal closure of the elements $r_{i}$ ($i \in \mathbb{Z}$) in $K \ast \widetilde{H}$, such that $u \in K_{\alpha,\omega} \ast \widetilde{H}$ for some $\alpha,\omega \in \mathbb{Z}$. Then $u$ is an element of the normal closure of the elements $r_{k}$ with $\alpha \leqslant \alpha_{r_{k}}$ and $\omega_{r_{k}} \leqslant \omega$ in $K \ast \widetilde{H}$.
\end{proposition}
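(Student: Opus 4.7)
The plan is to mirror closely the proof of Corollary~\ref{corstaggered}, with branch-products $Z_i$ playing the role of the free factors $V_i$ and Theorem~\ref{thmFrTP} playing the role of Theorem~\ref{HowFS}. There are two ingredients: an iterated amalgamation decomposition of the multi-relator quotient, and a normal-form minimality argument against the decomposition.

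First, by induction on $n-m$ (for fixed $m \in \mathcal{J}$), I would establish the isomorphism
\begin{align*}
& (K \ast \widetilde{H}) / \langle\!\langle r_k \mid m \leqslant k \leqslant n \rangle\!\rangle \\
& \cong \bigl((K_{-\infty,\omega_{r_{n-1}}} \ast \widetilde{H})/\langle\!\langle r_k \mid m \leqslant k \leqslant n-1 \rangle\!\rangle\bigr) \underset{K_{\alpha_{r_n},\omega_{r_{n-1}}} \ast \widetilde{H}}{\ast} \bigl((K_{\alpha_{r_n},\infty} \ast \widetilde{H})/\langle\!\langle r_n \rangle\!\rangle\bigr).
\end{align*}
The base case $m = n$ is a direct application of Theorem~\ref{thmFrTP}: taking $G := K_{\alpha_{r_n},\infty}$ and $S := K_{\alpha_{r_n},\omega_{r_{n-1}}}$, the minimality of the branch-limits $\alpha_{r_n}$, $\omega_{r_n}$ forces the minimal tree-product of $r_n$ to meet a vertex-group of $Z_{\alpha_{r_n}} \subseteq S$ and a vertex-group of $Z_{\omega_{r_n}} \subseteq G \ominus S$ (using $\omega_{r_n} > \omega_{r_{n-1}}$ from the staggered condition), so $S \ast \widetilde{H}$ embeds canonically into $(G \ast \widetilde{H})/\langle\!\langle r_n\rangle\!\rangle$ and the amalgamation is valid. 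For the step $n-1 \to n$, the same single-relator application of Theorem~\ref{thmFrTP} embeds $K_{\alpha_{r_n},\omega_{r_{n-1}}} \ast \widetilde{H}$ into the right factor, while the embedding into the left factor is supplied by the induction hypothesis together with iterated invocations of Theorem~\ref{thmFrTP} along the already-established decomposition at level $n-1$.

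With the decomposition in hand, suppose for contradiction that $u \in K_{\alpha,\omega} \ast \widetilde{H}$ lies in $\langle\!\langle r_j \mid j \in \mathcal{J}\rangle\!\rangle_{K \ast \widetilde{H}}$ but not in $\langle\!\langle r_k \mid \alpha \leqslant \alpha_{r_k},\ \omega_{r_k} \leqslant \omega\rangle\!\rangle$. Pick $m,n \in \mathcal{J}$ with $u \in \langle\!\langle r_\ell \mid m \leqslant \ell \leqslant n\rangle\!\rangle$ and $n-m$ minimal. The staggered condition gives $\alpha_{r_m} \leqslant \alpha_{r_\ell}$ and $\omega_{r_\ell} \leqslant \omega_{r_n}$ for all $m \leqslant \ell \leqslant n$, so the hypothesis forces $\alpha_{r_m} < \alpha$ or $\omega < \omega_{r_n}$. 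Treating the case $\omega < \omega_{r_n}$ (the other being symmetric), $u$ lies in the left factor of the amalgamation; since $u$ is trivial in the amalgamated product and the left factor embeds canonically, $u$ must already be trivial in that left factor, i.e.\ in $(K_{-\infty,\omega_{r_{n-1}}} \ast \widetilde{H})/\langle\!\langle r_k \mid m \leqslant k \leqslant n-1\rangle\!\rangle$. This contradicts the minimality of $n-m$.

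The main obstacle is securing the embeddings of $K_{\alpha_{r_n},\omega_{r_{n-1}}} \ast \widetilde{H}$ into both factors without circularity with the simultaneous induction for Theorem~\ref{thmFrTP}: each invocation of the single-relator case is at a complexity strictly below the current one, while the iterated form of Proposition~\ref{propmain} is only used at the already-completed inductive level. A second care point is the possible gap configuration $\omega_{r_{n-1}} < \alpha_{r_n}$: there the amalgamating subgroup $K_{\alpha_{r_n},\omega_{r_{n-1}}}$ degenerates to the branch-free core of $K$, and the residual branch-products $Z_\ell$ with $\omega_{r_{n-1}} < \ell < \alpha_{r_n}$ must be assigned consistently to one of the two factors (say, the left one, extending it to $K_{-\infty,\alpha_{r_n}-1}$), after which the normal-form argument is unchanged.
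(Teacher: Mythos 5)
Your proposal is correct and follows essentially the same route as the paper: the author likewise transplants the proof of Corollary~\ref{corstaggered}, building the amalgamated decomposition whose amalgamated subgroup $K_{\alpha_{r_{\vartheta}},\omega_{r_{\vartheta}}-1} \ast \widetilde{H}$ embeds via Theorem~\ref{thmFrTP} (invoked only at complexity levels already settled in the simultaneous induction, exactly as in your circularity remark), and then derives the contradiction from the minimality of the relator interval. The only difference is organizational — the paper folds the decomposition into the induction on $\vartheta-\zeta$ rather than establishing the full iterated isomorphism first — which does not change the argument.
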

 
\noindent\textbf{Simultaneous proof of Theorem~\ref{thmFrTP} and Proposition~\ref{propmain}.}\smallskip

\noindent We consider the tree-product $G$, the locally indicable group $H$ and the element $r$ from Theorem~\ref{thmFrTP}. Because of Lemma~\ref{lemconsiderT} it is sufficient to consider the case that $G$ is the minimal tree-product of $r$ and that $r$ is a contracted conjugate. Let $\sigma$ be the boundary-length of $G$. Our proof will be by induction over the tuple $(||r||-|G|,\sigma)$ in lexicographical order (where the first component is weighted higher). Let $T_{j}$ ($j \in \mathcal{J}$) be the minimal tree-products of the elements $r_{j}$ of Proposition~\ref{propmain}. W.\,l.\,o.\,g. we assume that the elements $r_{j}$ are contracted conjugates and that $\mathcal{J} = \mathbb{Z}$ or $\mathcal{J} =\{0,1,\dots,\delta\}$ for some $\delta \in \mathbb{N}_{0}$. Let $\sigma_{j}$ ($j \in \mathcal{J}$) be the boundary-lengths of the minimal tree-products $T_{j}$. First, we prove the following:\medskip

\noindent\underline{Statement 1.} For fixed $(t,s) \in \mathbb{Z} \times \mathbb{N}$ the statement of Theorem~\ref{thmFrTP} with $(||r||-|G|,\sigma) \leqslant (t,s)$ implicates the statement of Proposition~\ref{propmain} with $(\max_{j \in \mathcal{J}}(||r_{j}||-|T_{j}|),\sigma_{0}) \ \leqslant \ (t,s)$.\medskip

Let $u$ be a non-trivial element of the normal closure of the elements $r_{j}$ ($j \in \mathcal{J}$) in $K \ast \widetilde{H}$. Further, let $r_{j}$ ($m \leqslant j \leqslant n$) be the elements $r_{k}$ with $\alpha \leqslant \alpha_{r_{k}}$ and $\omega_{r_{k}} \leqslant \omega$. To get a contradiction we assume that $u$ is not contained in the normal closure of the elements $r_{i}$ ($m \leqslant i \leqslant n$) in $K \ast \widetilde{H}$. We choose indices $\zeta$, $\vartheta$ with $\zeta \leqslant \vartheta$ such that $u$ is contained in the normal closure of the elements $r_{k}$ ($\zeta \leqslant k \leqslant \vartheta$) in $K \ast \widetilde{H}$ and such that $\vartheta- \zeta$ is minimal with this property. Because of the assumption we have $\zeta < m$ or $n < \vartheta$. By inverting all indices, if necessary, we can assume w.\,l.\,o.\,g. $n < \vartheta$.

Our proof is by induction over $\lambda := \vartheta -\zeta \in \mathbb{N}_{0}$. For the base of induction ($\vartheta=0$), $u$ is an element of the normal closure of $r_{\vartheta}$ in $K \ast \widetilde{H}$. The inequality $n<\vartheta$ implicates the inequality $\omega < \omega_{r_{\vartheta}}$. Thus, $u$ is an element of $(K_{-\infty,\omega_{r_{\vartheta}}-1}) \ast \widetilde{H}$. Because of $(||r_{0}||-|T_{0}|,\sigma_{0})  \leqslant  (t,s)$, we derive with Theorem~\ref{thmFrTP} and Lemma~\ref{lemconsiderT} that $(K_{-\infty,\omega_{r_{\vartheta}}-1}) \ast \widetilde{H}$ embeds into $\big((K_{-\infty,\omega_{r_{\vartheta}}}) \ast \widetilde{H}\big) / \langle \! \langle r_{\vartheta} \rangle \! \rangle $. So $u$ has to be trivial in $K \ast \widetilde{H}$ which is a contradiction.

For the induction step ($\lambda \rightarrow \lambda+1$) we write
\begin{eqnarray*}
&& (K \ast \widetilde{H}) / \langle \! \langle r_{k} \mid \zeta \leqslant k \leqslant \vartheta \rangle \! \rangle\\
&\cong & \big((K_{-\infty,\omega_{r_{\vartheta}}-1} \ast \widetilde{H}) / \langle \! \langle r_{k} \mid \zeta \leqslant k \leqslant \vartheta-1 \rangle \! \rangle \big) \underset{K_{\alpha_{r_{\vartheta}},\omega_{r_{\vartheta}}-1} \ast \widetilde{H}}{\ast} \big( (K_{\alpha_{r_{\vartheta}},\infty} \ast \widetilde{H}) / \langle \! \langle r_{\vartheta} \rangle \! \rangle \big),
\end{eqnarray*}
where the embeddings of the amalgamated subgroup $K_{\alpha_{r_{\vartheta}},\omega_{r_{\vartheta}}-1} \ast \widetilde{H}$ follow from the induction hypothesis. Since $u$ is trivial in $K \ast \widetilde{H} / \langle \! \langle r_{k} \mid \zeta \leqslant k \leqslant \vartheta \rangle \! \rangle$ and is contained in the subgroup $K_{-\infty,\omega_{r_{\vartheta}}-1} \ast \widetilde{H}$, it is already trivial in the left factor $(K_{-\infty,\omega_{r_{\vartheta}}-1} \ast \widetilde{H}) / \langle \! \langle r_{k} \mid \zeta \leqslant k \leqslant \vartheta-1 \rangle \! \rangle$. This contradiction to the minimality of $\vartheta - \zeta$ ends the proof of Statement 1. 
 
Since the case $r \in (G \ast H) \backslash G$ is covered by Proposition~\ref{propemb}, we can further assume $r \in G$ for the proof of Theorem~\ref{thmFrTP}. Thus, we have $(G \ast H) / \langle \! \langle r \rangle \! \rangle = (G/\langle \! \langle r \rangle \! \rangle) \ast H$ and it suffices to prove the embedding of $S$ in $G / \langle \! \langle r \rangle \! \rangle$. Let $A$ be a leaf-group of $G$ which is not contained in $S$. Because of Statement 1 we cannot only use the induction hypothesis for Theorem~\ref{thmFrTP}, but also for Proposition~\ref{propmain}. Moreover, by proving Theorem~\ref{thmFrTP} we also prove Proposition~\ref{propmain}.

As the induction base we consider the cases $||r||-|G| < -1$ and $\sigma =2$. For $\sigma=2$ the edge-word of $A$ is a primitive element and the desired embedding follows from Lemma~\ref{lemprim}. In the case $||r||-|G| < -1$ there is a vertex-group $B$ of $G$ such that $r$ does not use a basis element of $B$ and $B$ is not directly connected to $A$. Note that $B$ is also no leaf-group of $G$ and can in particular not be $A$ since $G$ is the minimal tree-product of $r$. We consider the branch-products of $G$ which arise by deleting all generators of the vertex-group $B$ along with the edge-relations of the edges adjacent to $B$ in $G$. Let $Z_{A}$ be the branch-product which contains the leaf-group $A$ and let $R_{A}$ be the free product of the remaining branch-products. Since $B$ is not directly connected to $A$ we have $|Z_{A}| \geqslant 2$. Note that $Z_{A} \ast R_{A}$ is a tree-product using the fact that $R_{A}$ is locally indicable by Lemma~\ref{lemTPlocind}. We also note that $r$ uses at least the vertex-group $A$ of $Z_{A}$ and one vertex-group of $R_{A}$ because $G$ is the minimal tree-product of $r$. If $r \in Z_{A} \ast R_{A}$ possesses a minimal tree-product in $Z_{A}$, we get the canonical embedding of $(Z_{A} \ominus A)  \ast R_{A}$ into $(Z_{A} \ast R_{A})/ \langle \! \langle r \rangle \! \rangle$ by Proposition~\ref{propemb}. If $r \in Z_{A} \ast R_{A}$ does not possess a minimal tree-product in $Z_{A}$, we have $r \in A \ast R_{A}$ because of Definition~\ref{deftreepr}. Let $p=q$ with $p \in A$ be the edge-relation of the edge connecting $A$ and $Z_{A} \ominus A$. By Lemma~\ref{lemembl} we get the canonical embedding of $\langle p \mid \rangle \ast R_{A}$ into $(A \ast R_{A})/ \langle \! \langle r \rangle \! \rangle$. It follows
\begin{eqnarray*}
(Z_{A} \ast R_{A})/ \langle \! \langle r \rangle \! \rangle \ \ = \ \ \big((Z_{A} \ominus A)  \ast R_{A} \big) \ \underset{\mathbb{Z} \ast R_{A}}{\ast} \ \big((A \ast R_{A})/ \langle \! \langle r \rangle \! \rangle \big),
\end{eqnarray*}
where the generator of $\mathbb{Z}$ is mapped to $p$ in the right and to $q$ in the left factor. Altogether, we have the canonical embedding of $(Z_{A} \ominus A)  \ast R_{A}$ into $(Z_{A} \ast R_{A})/ \langle \! \langle r \rangle \! \rangle$ in every case. 

Let $u_{\ell}=v_{\ell}$ with $\ell \in \mathcal{L}$ for some index set $\mathcal{L}$ and $u_{\ell} \in B$ be the edge-relations of the edges adjacent to $B$ in $G$. Considering the free factors of $R_{A}$ as individual factors, every $v_{\ell}$ is contained in a different factor of $(Z_{A} \ominus A) \ast R_{A}$. Since all factors are locally indicable due to Lemma~\ref{lemTPlocind} they are in particular torsions-free. Thus, the free group $V$ with basis $\{v_{i} \mid i \in \mathcal{I} \}$ embeds into $(Z_{A} \ominus A) \ast R_{A}$. By Definition~\ref{defstaggered}, the elements $u_{i}$ ($i \in \mathcal{I}$) form a staggered-set of $B$ and $B/ \langle \! \langle u_{i} \mid i \in \mathcal{I} \rangle \! \rangle$ is a staggered presentation (over free groups). Because of Remark~\ref{rembasisinst} the free group $U$ with basis $\{u_{i} \mid i \in \mathcal{I} \}$ embeds canonically into $B$. Combining the embeddings proved so far we may write
\begin{eqnarray} \label{eq2amal}
G \ominus A  &\cong & B \underset{U \cong V}{\ast} \big( (Z_{A} \ominus A) \ast R_{A} \big) \nonumber \\
\text{and} \ \ \ G/ \langle \! \langle r \rangle \! \rangle & \cong & (G \ominus A) \underset{(Z_{A} \ominus A) \ast R_{A}}{\ast} \big( (Z_{A} \ast R_{A}) / \langle \! \langle r \rangle \! \rangle \big).
\end{eqnarray}
Now, the embedding of $G \ominus A$ and therefore in particular of $S$ into $G / \langle \! \langle r \rangle \! \rangle$ follows from the amalgamated product \eqref{eq2amal}.

For the induction step we consider the leaf-group $A$ and the edge-relation $p=q$ ($p \in A$) of the edge adjacent to $A$. The following preliminary considerations of the induction step as well as Case 1 are very similar to the corresponding part in the proof of Proposition~\ref{propemb}. In order to avoid unnecessary doubling we will shorten argumentations if they are already given in the proof of Proposition~\ref{propemb}. In the case that the cyclical reduction of $p$ contains only one basis element $a$ of $A$, $p$ is a primitive element of $A$ and the desired embedding follows analogously to the induction base for $\sigma = 2$. So we can assume that $p$ contains at least two different non-stabilizing basis elements $a$, $b$ of $A$.

If $p_{a}=p_{b}=0$, we consider the root-product $\widetilde{G}$ of $G$ given through $a=\widetilde{a}^{r_{b}}$ and apply the leaf-isomorphism given by $\widetilde{b}:=b \widetilde{a}^{r_{a}}$ or $\widetilde{b} := \widetilde{a}^{r_{a}} b$. Because of Lemma~\ref{lemconsiderroot} it is sufficient to prove the embedding of $S$ into $\widetilde{G} / \langle \! \langle r \rangle \! \rangle$. As usual, we denote in slight abuse of notation the image of $\widetilde{G}$ under the leaf-isomorphism and the new contracted conjugate again by $\widetilde{G}$ and $r$. In this situation we have $p_{\widetilde{a}}=r_{\widetilde{a}}=0$. If $p$ does not contain the generator $\widetilde{a}$, the tree-product $\widetilde{G}$ has shorter boundary-length than $G$ and the desired embedding follows by the induction hypothesis. If $p$ contains $\widetilde{a}$ we go directly to Case 2. Thus, in the following up to Case 2 we can assume that at least one exponential sum $p_{a}$ or $p_{b}$ is not $0$. W.\,l.\,o.\,g. let $p_{b} \neq 0$. We construct the root-product $\widetilde{G} := G \ast_{a=\widetilde{a}^{p_{b}}} \langle \widetilde{a} \rangle$ of $G$. Next, we apply the leaf-isomorphism of $\widetilde{G}$ which is given by $\widetilde{b} := b \widetilde{a}^{p_{a}}$ or $\widetilde{b} := \widetilde{a}^{p_{a}} b$ (cf. Remark~\ref{remoneoptionok}). It follows $p_{\widetilde{a}}=0$. With the same argumentation as before we may assume that $p$ contains $\widetilde{a}$. We consider two cases for $r_{\widetilde{a}}$.

\noindent\underline{Case 1.} Let $r_{\widetilde{a}} \neq 0$.\\
By the preliminary considerations we have $p_{\widetilde{b}} \neq 0$. Therefore, (for some arbitrary fixed presentation $r$) the matrix
$\begin{pmatrix}                                
p_{\widetilde{a}} & p_{\widetilde{b}} \\                                               
r_{\widetilde{a}} & r_{\widetilde{b}}                                               
\end{pmatrix}$
has full rank and the desired embedding follows from Remark~\ref{remmatrix} and Theorem~\ref{thmHowequ}.\medskip

\noindent\underline{Case 2.} Let $r_{\widetilde{a}} = 0$.\\
We consider the leaf-homomorphism $\varphi \colon \widetilde{G} \rightarrow \mathbb{Z}$ with $\varphi(\widetilde{a})=1$, define $K:=\ker(\varphi)$ and use the notations of Remark~\ref{remkernel} and Notation~\ref{notKij}. To show that $S$ embeds into $\widetilde{G} / \langle \! \langle r \rangle \! \rangle$ it suffices to show that the copy $S_{0}$ of $S$ embeds into $K / \langle \! \langle r_{i} \mid i \in \mathbb{Z} \rangle \! \rangle$. Because of Lemma~\ref{lemalg} we can assume w.\,l.\,o.\,g. that all $r_{i}$ ($i \in \mathbb{Z}$) are contracted conjugates. Let $Z$ be a leaf-group of $\widetilde{G}$ different from $A$ and let $Z_{i}:=\widetilde{a}^{-i} Z \widetilde{a}^{i}$. Since we assumed $G$ and therefore $\widetilde{G}$ to be the minimal tree-product of $r$ we know that each minimal tree-product $T_{i}$ of an element $r_{i} \in K$ ($i \in \mathbb{Z}$) contains at least one leaf-group $Z_{k}$ along with a vertex-group from $K \ominus Z_{k}$ for at least one $k \in \mathbb{Z}$. Thus, because of symmetry, $K / \langle \! \langle r_{i} \mid i \in \mathbb{Z} \rangle \! \rangle$ is a staggered presentation with respect to the branch-products $Z_{i}$ ($i \in \mathbb{Z}$). Let $j \in \mathbb{Z}$ be an index such that $T_{j}$ contains the leaf-group $Z_{0}$. Note that $S_{0}$ is part of $K_{0,0}$ (cf. Notation~\ref{notKij}). Thus, by Lemma~\ref{lemlexi}, we can apply the induction hypothesis for Proposition~\ref{propmain}. If $S_{0}$ does not contain the leaf-group $Z_{0}$, we immediately arrive at the desired embedding of $S_{0}$ into $K / \langle \! \langle r_{i} \mid i \in \mathbb{Z} \rangle \! \rangle$. Thus, we may assume in the following that $Z_{0}$ is part of $S_{0}$. By the induction hypothesis for Proposition~\ref{propmain} it is sufficient to show the canonical embedding of $S_{0}$ into $K / \langle \! \langle r_{j} \rangle \! \rangle$. Because of Lemma~\ref{lemconsiderT} it even suffices to prove the embedding of $S':=T_{j} \cap S_{0}$ into $T_{j}/\langle \! \langle r_{j} \rangle \! \rangle$.

Note that $\widetilde{a}$ has to be a reduction- or fan-generator (see Definition~\ref{defredgen}). If $\widetilde{a}$ is a reduction-generator, we have $T_{j}=\widetilde{A} \ast_{p_{0}=q_{0}} (\widetilde{G} \ominus A)_{0}$. As noticed in Remark~\ref{remroot} we also have $|p_{0}|_{\widetilde{A}} < |p|_{A}$. So the boundary-length of $T_{j}$ is strictly smaller than the boundary length of $\widetilde{G}$. This in combination with Lemma~\ref{lemalg} allows us to apply the induction hypothesis for Theorem~\ref{thmFrTP}. We get the desired embedding of $S'$ into $T_{j}/\langle \! \langle r_{j} \rangle \! \rangle$. It remains to consider the case that $\widetilde{a}$ is a fan-generator. In this case we have $|T_{j}| > |\widetilde{G}|(=|G|)$ since $T_{j}$ contains the vertex-group $\widetilde{A}$, at least one copy of every vertex-group of $\widetilde{G} \ominus A$ and at least two copies of the vertex-group $B$ of $\widetilde{G}$ adjacent to $A$. Combining this with Lemma~\ref{lemalg} we are able to apply the induction hypothesis for Theorem~\ref{thmFrTP} and deduce the embedding of $S'$ into $T_{j}/\langle \! \langle r_{j} \rangle \! \rangle$.\qed

\section{Proof of the Main Theorem}

In this section we prove Main Theorem~\ref{main} as a corollary of Theorem~\ref{thmFrTP} (Freiheitssatz for tree-products).\medskip

Let $p$ respectively $q$ be the generator of the subgroup of $A$ respectively $B$ which is identified with the amalgamated subgroup $U$. We choose bases $\mathcal{A}$ of $A$ and $\mathcal{B}$ of $B$ such that $p$ is cyclically reduced respectively $\mathcal{A}$ and $q$ is cyclically reduced respectively $\mathcal{B}$. Such bases are always available by replacing the basis elements with suitable conjugates if necessary. With the new bases, $G$ has the form of a tree-product of size $2$ (cf. Definition~\ref{deftreepr}). Since, by assumption, $r$ is neither conjugate to an element of $A$ nor $B$, $G$ is the minimal tree-product of $r$ (cf. Definition~\ref{defmintreepr}). Thus, the desired embeddings follow directly from Theorem~\ref{thmFrTP}.\\

\textbf{Acknowledgements.} I wish to thank Professor Jim Howie for his encouragement as well as his help to understand the connection between the results of \cite{ArtHowSaeFS} and the main theorem of this article.

\bibliography{Literatur}
    \bibliographystyle{alpha}

\end{document}